\documentclass[a4paper,11pt, reqno]{amsart}

\usepackage{amsmath}
\usepackage{amssymb}
\usepackage{mathrsfs}
\usepackage{ifthen}
\usepackage{graphicx}
\usepackage[T1]{fontenc} 
\usepackage{lipsum}
\usepackage[a4paper,margin=0.75in]{geometry}

\usepackage{cite}







\def\switchlinenumbers{\@ifstar
	{\let\makeLineNumberOdd\makeLineNumberRight
		\let\makeLineNumberEven\makeLineNumberLeft}%
	{\let\makeLineNumberOdd\makeLineNumberLeft
		\let\makeLineNumberEven\makeLineNumberRight}%
}

\def\setmakelinenumbers#1{\@ifstar
	{\let\makeLineNumberRunning#1%
		\let\makeLineNumberOdd#1%
		\let\makeLineNumberEven#1}%
	{\ifx\c@linenumber\c@runninglinenumber
		\let\makeLineNumberRunning#1%
		\else
		\let\makeLineNumberOdd#1%
		\let\makeLineNumberEven#1%
		\fi}%
}


\nonstopmode \numberwithin{equation}{section}
\setlength{\textwidth}{16.2cm} \setlength{\oddsidemargin}{0cm}
\setlength{\evensidemargin}{0cm} \setlength{\footskip}{30pt}
\pagestyle{plain}

\newtheorem*{theorem*}{Theorem}

\newtheorem{thm}{Theorem}[section]
\newtheorem{cor}{Corollary}[section]
\newtheorem{lem}{Lemma}[section]

\theoremstyle{definition}

\newtheorem{example}{Example}[section]
\newtheorem{qsn}{Question} [section]
\newtheorem{prob}[equation]{Problem}
\newtheorem{rem}{Remark}[section]

\newenvironment{customthm}[1]
{\innercustomthm}
{\endinnercustomthm}


\newcounter{minutes}\setcounter{minutes}{\time}
\divide\time by 60
\newcounter{hours}\setcounter{hours}{\time}
\multiply\time by 60
\addtocounter{minutes}{-\time}

\newcounter {own}
\def\theown {\thesection       .\arabic{own}}

\newenvironment{pf}[1][]{%
	\vskip 3mm
	\noindent
	\ifthenelse{\equal{#1}{}}%
	{{\slshape {\bf Proof}. }}%
	{{\slshape #1.} }%
}%
{\qed\bigskip}

\newcounter{alphabet}



\def\be{\begin{equation}}
	\def\ee{\end{equation}}

\newcommand{\bee}{\begin{enumerate}}
	\newcommand{\eee}{\end{enumerate}}

\newcommand{\blem}{\begin{lem}}
	\newcommand{\elem}{\end{lem}}
\newcommand{\bthm}{\begin{thm}}
	\newcommand{\ethm}{\end{thm}}
\newcommand{\bcor}{\begin{cor}}
	\newcommand{\ecor}{\end{cor}}
\newcommand{\beg}{\begin{examp}}
	\newcommand{\eeg}{\end{examp}}
\newcommand{\begs}{\begin{examples}}
	\newcommand{\eegs}{\end{examples}}
\newcommand{\bdefe}{\begin{defin}}
	\newcommand{\edefe}{\end{defin}}
\newcommand{\bprob}{\begin{prob}}
	\newcommand{\eprob}{\end{prob}}
\newcommand{\bei}{\begin{itemize}}
	\newcommand{\eei}{\end{itemize}}
\newcommand{\real}{{\operatorname{Re}\,}}

\newcommand{\norm}[1]{\left\lVert#1\right\rVert}

\newcommand{\comment}[1]{}


\subjclass[{AMS} Subject Classification:]{Primary 32A05, 31C10, 46B07;  Secondary 32Q02, 46E40}
\keywords{Pluriharmonic functions, Bohr's theorem, Power series, complete Reinhardt domain, Minkowski space, Banach sequence space}

\begin{document}
	
	\title[]{On the second Bohr radius for vector valued pluriharmonic functions}

	\author{Himadri Halder}
	\address{Himadri Halder,
		Department of Mathematics,
		Indian Institute of Science, Bangalore-560012, India}
	\email{himadrihalder119@gmail.com, himadrih@iisc.ac.in}
	

	
	
	\begin{abstract}
		In this paper, we introduce the notion of the second Bohr radius for vector valued pluriharmonic functions on complete Reinhardt domains in $\mathbb{C}^n$. This investigation is motivated by the work of
		Lev Aizenberg [Proc. Amer. Math. Soc. 128 (2000), 1147  –1155], where the corresponding problem was studied for complex valued holomorphic functions. We show that the second Bohr radius constant for pluriharmonic functions is strictly positive under suitable condition. In addition, we obtain its asymptotic behavior in both the finite- and infinite-dimensional settings using invariants from local Banach space theory. Asymptotic estimates for this constant are obtained on both convex and non-convex complete Reinhardt domains. Our results also apply to a broad class of Banach sequence spaces, including symmetric and convex Banach spaces. The framework developed here also includes the second Bohr radius problem for vector valued holomorphic functions. As an application of our results, we derive several consequences that extend known results in the scalar valued setting as well as existing results in the literature.
	\end{abstract}

	\maketitle
	\pagestyle{myheadings}
	\markboth{Himadri Halder}{Second Bohr radius for vector valued pluriharmonic functions}
	\tableofcontents
	\section{Introduction}\label{section-1}
	The classical theorem of Harald Bohr \cite{Bohr-1914}, proved over a century ago, has developed into a broad and active area of research in complex and functional analysis, now referred to as Bohr’s theorem and the Bohr radius problem. Bohr's original result concerns holomorphic functions of one complex variable. Interest in this theme was revived in the $1990$s following its successful application in Banach algebras (see \cite{Dixon & BLMS & 1995}), as well as subsequent generalizations to holomorphic functions of several complex variables (see \cite{boas-1997,aizn-2000a}) and to more abstract settings in functional analysis (see \cite{aizn-2000b}), especially those related to local Banach space theory (see \cite{defant-2003}).
\vspace{1mm}

	In higher dimensions, several approaches have been proposed to generalize Bohr’s theorem. A significant breakthrough was achieved by Boas and Khavinson (see \cite{boas-1997}), who introduced the notion of the Bohr radius, now referred to as the first Bohr radius, for complete Reinhardt domains in 
	$\mathbb{C}^n$. Their work opened a new direction in the function theory of several complex variables, revealing deep connections with functional analysis, including fundamental aspects of both the local and global theory of Banach spaces. Independently, Aizenberg \cite{aizn-2000a} introduced another multidimensional analogue, known as the second Bohr radius, for complete Reinhardt domains in $\mathbb{C}^n$. 
	\par		
	Among the various aspects of Bohr's theorem, a particularly intriguing direction emerges when the problem is studied for vector valued holomorphic functions on complete Reinhardt domains through the lens of local Banach space theory. This viewpoint naturally builds a bridge between the function theory of several complex variables and functional analysis. Along these lines, Blasco \cite{Blasco-OTAA-2010,Blasco-Collect-2017} was the first to initiate the study of the Bohr phenomenon for Banach space valued holomorphic functions defined on the unit disc. Subsequently, Defant, Maestre, and Schwarting \cite{defant-2011} extended this investigation to vector valued holomorphic functions by studying first Bohr radius on the unit polydisc. A major breakthrough was achieved by Defant, Garc\'{i}a, and Maestre \cite{defant-2003}, who studied the first Bohr radius problem for complex valued holomorphic functions on the unit ball of finite-dimensional Banach spaces with a canonical basis that is normalized and 1-unconditional. 
	\vspace{1mm}
	
	More recently, the present author \cite{Himadri-local-Banach-1} carried out a systematic study of the first Bohr radius problem for vector valued holomorphic and pluriharmonic functions defined on bounded complete Reinhardt domains in $\mathbb{C}^n$, analyzing the asymptotic behavior of the Bohr radius in both finite- and infinite-dimensional settings. The framework developed in this article extends the classical Minkowski-space setting to a substantially broader class of symmetric Banach sequence spaces.
	\vspace{1mm}
	
	To the best of our knowledge, the second Bohr radius problem for pluriharmonic functions remains unexplored. In this work, we address this problem for vector valued pluriharmonic functions on complete Reinhardt domains via local Banach space theory. We derive asymptotic estimates of this constant in both finite and infinite dimensions. As an application, we obtain results for Banach sequence spaces, demonstrating that our framework applies to a broader class of domains in $\mathbb{C}^n$. 
	\vspace{1mm}
	
	In order to address this problem, we first recall the definitions of the first Bohr radius for vector valued holomorphic and pluriharmonic functions introduced in \cite{Himadri-local-Banach-1} (see also \cite{defant-2011}). Let $\Omega \subset \mathbb{C}^n$ be a complete Reinhardt domain and let $n \in \mathbb{N}$. Let $\mathcal{H}(\Omega,X)$ denote the set of all bounded $X$ valued holomorphic functions on $\Omega$. Let $U:X\rightarrow Y$ be a non-null bounded linear operator between complex Banach spaces $X$ and $Y$, with $\norm{U} \leq \lambda$. For $1 \leq p < \infty$, the first $\lambda$-powered Bohr radius of $U$ with respect to the class $\mathcal{H}(\Omega,X)$, denoted by $K_{\lambda}(\Omega, p,U)$, is defined as the supremum of all $r \in [0,1]$ such that for every $f \in \mathcal{H}(\Omega,X)$ of the form $f(z)=\sum_{\alpha}a_{\alpha}z^{\alpha}$ one has 
	\begin{equation} \label{e-1.2} \sup_{z \in r\Omega}\, \sum_{\alpha} \norm{U(a_{\alpha})z^\alpha}^p_{Y} \leq \lambda^p\,\norm{f}^p_{\Omega,X}, \end{equation} 
	where $\norm{f}_{\Omega,X}:=\sup_{z \in \Omega}\norm{f(z)}_{X}$. We now fix some notation. We write $K(\Omega, p,U):=K_{1}(\Omega, p,U)$. When $U=I:X\rightarrow X$ is the identity operator, we set $K_{\lambda}(\Omega, p,X):=K_{\lambda}(\Omega, p,I)$ and $K(\Omega, p,X):=K_{1}(\Omega, p,X)$. In the scalar-valued case, we denote $K_{\lambda}(\Omega, p):=K_{\lambda}(\Omega, p,\mathbb{C})$ and $K(\Omega,p):=K_1(\Omega,p)$. With these notations, the classical Bohr's theorem asserts that $K(\mathbb{D},1)=1/3$ (see \cite{Bohr-1914}). The constant $K(\Omega,1)$ was first investigated by Boas and Khavinson (see \cite{boas-1997,boas-2000}). We return to a detailed discussion of this constant in Section $3$.
	\vspace{1mm}
	
	 Recall that, on a simply connected complete Reinhardt domain $\Omega$, a continuous function $f:\Omega \rightarrow \mathcal{B}(\mathcal{H})$ is pluriharmonic if and only if it admits a decomposition of the form
	 \begin{equation} \label{e-1.3-a}
	 	f(z)=\sum_{m=0}^{\infty} \sum_{|\alpha|=m} a_{\alpha}\, z^{\alpha} + \sum_{m=1}^{\infty} \sum_{|\alpha|=m} b^{*}_{\alpha}\, \bar{z}^{\alpha},
	 \end{equation}
	 where $h(z)=\sum_{m=0}^{\infty} \sum_{|\alpha|=m} a_{\alpha}\, z^{\alpha}$ and $g(z)=\sum_{m=1}^{\infty} \sum_{|\alpha|=m} b_{\alpha}\, z^{\alpha}$ are $\mathcal{B}(\mathcal{H})$-valued holomorphic functions on $\Omega$. Here $\mathcal{B}(\mathcal{H})$ denotes the algebra of all bounded linear operators acting on a complex Hilbert space $\mathcal{H}$. For $T \in \mathcal{B}(\mathcal{H})$, $\norm{T}$ denotes the operator norm and $T^{*}$ the usual adjoint, and we write $Re(T):=(T+T^*)/2$. If $z=(z_1,\ldots,z_n)$, then $\overline{z}$ denotes the componentwise complex conjugate $(\overline{z}_1,\ldots,\overline{z}_n)$.
	 
	 We denote by $\mathcal{PH}(\Omega,X)$ the space of all bounded $X$-valued pluriharmonic functions on $\Omega$, where $X=\mathcal{B}(\mathcal{H})$. The boundedness condition means that $\norm{f}_{\Omega,X}:=\sup_{z \in \Omega}\,\norm{f(z)}_{X} < \infty$ for every $f \in \mathcal{PH}(\Omega,X)$.
	\par
	Let $Y$ be any complex Banach space. Let $\Omega\subset \mathbb{C}^n$ be a simply connected complete Reinhardt domain and $n\in \mathbb{N}$. Let $U:\mathcal{B}(\mathcal{H})\rightarrow Y$ be a bounded liner operator and $\norm{U} \leq \lambda$. For $1 \leq p < \infty$, the first $\lambda$-powered Bohr radius of $U$ with respect to the class $\mathcal{PH}(\Omega,\mathcal{B}(\mathcal{H}))$, denoted by $R_{\lambda}(\Omega, p,U)$, is defined to be the supremum of all $r \in [0,1]$ such that for all $f \in \mathcal{PH}(\Omega,\mathcal{B}(\mathcal{H}))$ of the form \eqref{e-1.3-a} we have 
	\begin{equation} \label{e-1.4-a}
		\sup_{z \in r\Omega}\,\sum_{m=0}^{\infty} \sum_{|\alpha|=m} (\norm{U(a_{\alpha})}^p_{Y} + \norm{U(b_{\alpha})}^p_{Y})|z^\alpha|^p \leq \lambda^p\,\norm{f}^p_{\Omega,\mathcal{B}(\mathcal{H})}. 
	\end{equation}
	
	\noindent For short, write $R(\Omega, p,U):=R_{1}(\Omega, p,U)$, $R_{\lambda}(\Omega, p,\mathcal{B}(\mathcal{H})):=R_{\lambda}(\Omega, p,U)$ whenever $U=I:\mathcal{B}(\mathcal{H})\rightarrow \mathcal{B}(\mathcal{H})$, $R(\Omega, p,\mathcal{B}(\mathcal{H})):=R_{1}(\Omega, p,\mathcal{B}(\mathcal{H}))$, $R_{\lambda}(\Omega, p):=R_{\lambda}(\Omega, p,\mathbb{C})$, and $R(\Omega,p):=R_1(\Omega,p)$. The above definition was introduced by the present author of this article, who also undertook a detailed study of the asymptotic behaviour of $R_{\lambda}(\Omega, p,U)$ (see \cite{Himadri-local-Banach-1}). The Bohr phenomenon for harmonic functions of one variable in the scalar setting was first investigated in \cite{Abu-2010}. Subsequently, the asymptotic behaviour of the constants $R(\Omega,1)$ and $R(\Omega,p)$ was investigated in \cite{hamada-JFA-2022,das-2024}  whenever $\Omega=B_{\ell^n _q}$, where $B_{\ell^n _q}:=\left\{z=(z_{1},\ldots,z_{n}) \in \mathbb{C}^n:\norm{z}_{q}:=\left(\sum_{i=1}^{n}|z_{i}|^q\right)^{1/q}<1\right\}$, $1\leq q <\infty$ and $B_{\ell^n _\infty}:=\mathbb{D}^n$.
	\vspace{1mm}
	
	In a related but distinct approach, Aizenberg \cite{aizn-2000a} introduced another variant of multidimensional Bohr radius, namely the second Bohr radius, for complex valued holomorphic functions on bounded complete Reinhardt domains in $\mathbb{C}^n$. Primarily motivated by this notion, together with the above definition, we now introduce the concept of the second Bohr radius for operator valued pluriharmonic functions. Let $\Omega,\, Y,\, U$ and $\lambda$ be as in the above definition defined by \eqref{e-1.4-a}. For $1 \leq p < \infty$, the second $\lambda$-powered Bohr radius of $U$ with respect to the class $\mathcal{PH}(\Omega,\mathcal{B}(\mathcal{H}))$, denoted by $L_{\lambda}(\Omega, p,U)$, is defined to be the supremum of all $r \in [0,1]$ such that for all $f \in \mathcal{PH}(\Omega,\mathcal{B}(\mathcal{H}))$ of the form \eqref{e-1.3-a} we have 
	\begin{equation} \label{e-1.4-aaa}
		\sum_{m=0}^{\infty} \sum_{|\alpha|=m} \sup_{z \in r\Omega}\, (\norm{U(a_{\alpha})}^p_{Y} + \norm{U(b_{\alpha})}^p_{Y})|z^\alpha|^p \leq \lambda^p\,\norm{f}^p_{\Omega,\mathcal{B}(\mathcal{H})}. 
	\end{equation}
	
	\noindent For brevity, we adopt the following notations: $L(\Omega, p,U):=L_{1}(\Omega, p,U)$, $L_{\lambda}(\Omega, p,\mathcal{B}(\mathcal{H})):=L_{\lambda}(\Omega, p,U)$ whenever $U=I:\mathcal{B}(\mathcal{H})\rightarrow \mathcal{B}(\mathcal{H})$, $L(\Omega, p,\mathcal{B}(\mathcal{H})):=L_{1}(\Omega, p,\mathcal{B}(\mathcal{H}))$, $L_{\lambda}(\Omega, p):=L_{\lambda}(\Omega, p,\mathbb{C})$, and $L(\Omega,p):=L_1(\Omega,p)$.
	\vspace{1mm}
	
	In the same spirit as that of pluriharmonic functions, one can consider the corresponding notion of the second Bohr radius for Banach space valued holomorphic functions. Let $\Omega\subset \mathbb{C}^n$ be a complete Reinhardt domain and $n\in \mathbb{N}$. Let $U:X\rightarrow Y$ be a bounded liner operator between two complex Banach spaces and $\norm{U} \leq \lambda$. For $1 \leq p < \infty$, the second $\lambda$-powered Bohr radius of $U$ with respect to the class $\mathcal{H}(\Omega,X)$, denoted by $B_{\lambda}(\Omega, p,U)$, is defined as the supremum of all $r \in [0,1]$ such that for every $f \in \mathcal{H}(\Omega,X)$ of the form $f(z)=\sum_{\alpha}a_{\alpha}z^{\alpha}$ one has 
	\begin{equation} \label{e-1.2-a}
		 \sum_{\alpha} \sup_{z \in r\Omega}\, \norm{U(a_{\alpha})z^\alpha}^p_{Y} \leq \lambda^p\,\norm{f}^p_{\Omega,X}, 
	\end{equation}
	where $\norm{f}_{\Omega,X}:=\sup_{z \in \Omega}\norm{f(z)}_{X}$. Set $B(\Omega, p,U):=B_{1}(\Omega, p,U)$, $B_{\lambda}(\Omega, p,X):=B_{\lambda}(\Omega, p,U)$ whenever $U=I:X\rightarrow X$, $B(\Omega, p,X):=B_{1}(\Omega, p,X)$, $B_{\lambda}(\Omega, p):=B_{\lambda}(\Omega, p,\mathbb{C})$, and $B(\Omega,p):=B_1(\Omega,p)$. The constant $B(\Omega,1)$ was first investigated by Aizenberg \cite{aizn-2000a}, who derived both upper and lower bounds. Subsequent advances were made by Boas in \cite{boas-2000}. A detailed discussion of these results is presented in Section $3$.
	\vspace{1mm}
	
	An application of the elementary inequality stating that the supremum of a sum does not exceed the sum of the suprema yields $B_{\lambda}(\Omega, p,U) \leq K_{\lambda}(\Omega, p,U)$ and $L_{\lambda}(\Omega, p,U) \leq R_{\lambda}(\Omega, p,U)$. Moreover, equality holds in both cases whenever $\Omega=\mathbb{D}^n$, the unit polydisc. In addition, it is well known that $B(\mathbb{D},1)=K(\mathbb{D},1)=1/3$, which is precisely the classical Bohr radius of the unit disc $\mathbb{D}$.
	\vspace{1mm}
	
	It is appropriate to note that the Bohr radius problem becomes unstable when $\dim(X)>1$. This is evident from the fact that $K(\mathbb{D},1,B_{\ell^n_q})=0$ (see \cite{Blasco-OTAA-2010}). Furthermore, Blasco \cite{Blasco-Collect-2017} showed that $K(\mathbb{D},p,X)=0$ for $p \in [1,2)$ whenever $\dim (X) \geq 2$, whereas $K(\mathbb{D},p,X)>0$ if and only if $X$ is $p$-uniformly $\mathbb{C}$-convex for $2 \geq p < \infty$. These results also apply to $B(\mathbb{D},p,X)$, since $B_{\lambda}(\mathbb{D}^n, p,U) = K_{\lambda}(\mathbb{D}^n, p,U)$.
	The positivity of the
	constant $K_{\lambda}(\mathbb{D}^n, p,U)$ was investigated by Defant, Maestre, and Schwarting \cite{defant-2011} under suitable conditions. 
	\par
	Consequently, a central question is whether the first and second Bohr radii are non zero for a given complete Reinhardt domain $\Omega$. In a recent paper, the present author \cite{Himadri-local-Banach-1} has answered this question for the first Bohr radii constants $K_{\lambda}(\Omega, p,U)$ and $R_{\lambda}(\Omega, p,U)$ corresponding to holomorphic and pluharmonic functions, respectively.
	\vspace{0.5mm}
	
	 With the first Bohr radius problem now settled, it is natural to turn to the second Bohr radii and investigate under what conditions the constants $B_{\lambda}(\Omega, p,U)$ and $L_{\lambda}(\Omega, p,U)$ are non zero. This leads to the following general question.
	 \begin{qsn} \label{qsn-1.3}
	 	For a given complete Reinhardt domain $\Omega \subset \mathbb{C}^n$, can the second Bohr radii constants $B_{\lambda}(\Omega, p,U)$ and $L_{\lambda}(\Omega, p,U)$ be studied in detail? If so, under what conditions are these constants nonzero, and what can be said about their asymptotic behavior?
	 \end{qsn}  
	The main contribution of this paper is to answer this question affirmatively using the framework of local Banach space theory. To the best of our knowledge, these questions have not been previously explored. 
	\vspace{1mm}
	
	The paper is organized as follows. In Section $2$, we establish conditions under which the second Bohr radii constants $B_{\lambda}(\Omega, p,U)$ and $L_{\lambda}(\Omega, p,U)$ are strictly positive. Section $3$ is devoted to the asymptotic study of these constants using invariants from local Banach space theory in the case where $X$ or $\mathcal{B}(\mathcal{H})$ is finite-dimensional, while the infinite-dimensional setting is treated in Section $4$. Section $5$ presents several applications of our results to Banach sequence spaces.
	\comment{Over the last three decades, Bohr's theorem and its applications have occupied a central position in geometric function theory, connecting ideas from several complex variables, functional analysis, and operator theory. For a given complete Reinhardt domain $\Omega \subset \mathbb{C}^n$, the Bohr radius $K_{n}(\Omega)$ of $\Omega$ is defined to be the supremum of all $r \in [0,1]$ such that for each holomorphic function $f(z)=\sum_{\alpha}c_{\alpha}\,z^{\alpha}:\Omega\rightarrow \mathbb{C}$ we have
		\begin{equation} \label{e-hh-mult-BI}
			\sup_{z \in r\Omega}\, \sum_{\alpha}|c_{\alpha}\,z^{\alpha}| \leq \sup_{z \in \Omega}\, \left|f(z)\right|.
		\end{equation}
		Note that with this notation, Bohr's celebrated power series theorem states that $K_{1}(\mathbb{D})=1/3$ (see \cite{Bohr-1914}). One of the most striking aspects of Bohr's theorem arises in multidimensional settings, where the situation becomes significantly more complex and many problems remain open.}
	\section{Characterization of non zero second Bohr radii constants}
	A family of holomorphic functions defined on a complete Reinhardt domain $\Omega$ is said to satisfy the first Bohr phenomenon if there exists a constant $r=r_{0}\in (0,1]$ such that inequality \eqref{e-1.2} is valid for every function in the class. The largest such constant is referred to as the first Bohr radius associated with that class. An analogous notion can be formulated for pluriharmonic mappings on bounded simply connected domains: in this case, the first Bohr phenomenon is said to hold whenever \eqref{e-1.4-a} is satisfied for a universal radius $r=r_{0}\in (0,1]$. Replacing \eqref{e-1.2} and \eqref{e-1.4-a} with \eqref{e-1.2-a} and \eqref{e-1.4-aaa}, respectively, leads to the formulation of a second Bohr phenomenon in the holomorphic and pluriharmonic frameworks.
	\par 
	It is well known that the Bohr phenomenon does not hold universally across all function classes; numerous counterexamples and partial results illustrate this limitation (see \cite{aizn-2000b,Himadri-Vasu-PEMS,bene-2004,Blasco-Collect-2017,defant-2011,Himadri-local-Banach-1}). This naturally leads to the problem of identifying those classes of functions for which a Bohr phenomenon is valid. A complete characterization of this problem in the context of the first Bohr phenomenon was obtained by the author in \cite{Himadri-local-Banach-1}. The present work extends this line of investigation to the second Bohr phenomenon, providing conditions that guarantee its validity and, in doing so, resolving first part of Question \ref{qsn-1.3}.
	\vspace{1mm}

	\comment{In the Banach space valued setting, Blasco \cite{Blasco-Collect-2017} has shown that $K(\mathbb{D},p,X)=0$ for $p \in [1,2)$ and $\dim (X) \geq 2$, while $K(\mathbb{D},p,X)>0$ if and only if $X$ is $p$-uniformly $\mathbb{C}$-convex for $2 \geq p < \infty$. 
		These results demonstrate that $K(\mathbb{D},p,X)$ is not necessarily nonzero for all Banach spaces $X$. This limitation motivated a refinement of its definition to ensure non-vanishing behavior for all $X$, which led to the introduction of the parameter $\lambda$ in \cite[Definition 1.1]{defant-2012}. With these developments in mind, it is natural to ask the following question.
		\begin{qsn} \label{qsn-1.1}
			For a given complete Reinhardt domain $\Omega$, can the the constant $K_{\lambda}(\Omega, p,U)$ be studied in detail? If so, does it always remain nonzero, and what can be said about its asymptotic behavior?
		\end{qsn}
		We answer affirmatively to this question in this paper. We now focus on complex-valued and operator-valued pluriharmonic functions defined on domains $\Omega$ in $\mathbb{C}^n$. 
		The Bohr radius problem for complex valued harmonic functions was first explored in \cite{Abu-2010}, later extended to operator valued functions in \cite{bhowmik-2021}, and more recently to pluriharmonic Hilbert space valued functions in \cite{hamada-Math-Nachr-2023}. These advances naturally lead to the following question for operator-valued pluriharmonic functions.
		\begin{qsn} \label{qsn-1.2}
			Let $\Omega \subset \mathbb{C}^n$ be a simply connected complete Reinhardt domain. Can we study the Bohr radius problem for operator valued pluriharmonic functions analogously to the inequality \eqref{e-1.2} and the constant $K_{\lambda}(\Omega, p,U)$?
		\end{qsn}
		In this paper, we provide an affirmative answer to this question, and to this end, we first introduce the notion of the powered Bohr radius for operator-valued pluriharmonic functions. }
	 
	We begin with the following lemma comparing second Bohr radius constants for pairs of domains, starting with the pluriharmonic case. 
	\par
	Before stating our result, we introduce the following standard notation. For bounded Reinhardt domains $\Omega_{1}, \Omega_{2}\subset \mathbb{C}^n$, let $S(\Omega_1, \Omega_{2}):= \inf \left\{s>0 : \Omega_{1} \subset s \, \Omega_{2}\right\}$. A Banach space $Z$ satisfying $\ell_1 \subset Z \subset c_{0}$ (with normal inclusions) is called a Banach sequence space if the canonical unit vectors ${e_{k}}$ form a $1$-unconditional basis of $Z$ (see Section $3$ for the definition of unconditional basis). Recall that if $Z$ and $W$ are Banach sequence spaces then $S(B_{Z},B_{W})=\norm{\mathrm{Id}:Z \rightarrow W}$.
	
	 \begin{lem} \label{lem-3.5}
		Let $\Omega_{1}$ and $\Omega_{2}$ be two bounded simply connected complete Reinhardt domains in $\mathbb{C}^n$. Then we have $$\frac{L_{\lambda}(\Omega_{2}, p,U)}{S(\Omega_{1},\Omega_{2})S(\Omega_{2},\Omega_{1})} \, \leq L_{\lambda}(\Omega_{1}, p,U) \leq S(\Omega_{1},\Omega_{2})S(\Omega_{2},\Omega_{1}) \, L_{\lambda}(\Omega_{2}, p,U).
		$$
	\end{lem}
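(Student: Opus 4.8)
The plan is to prove this by a dilation (transference) argument. First note that the quantity $S(\Omega_1,\Omega_2)S(\Omega_2,\Omega_1)$ is symmetric in $\Omega_1$ and $\Omega_2$, so the two displayed inequalities are obtained from one another by interchanging the roles of the two domains; hence it suffices to prove the right-hand inequality $L_\lambda(\Omega_1,p,U)\le S(\Omega_1,\Omega_2)S(\Omega_2,\Omega_1)\,L_\lambda(\Omega_2,p,U)$, and applying this with $\Omega_1$ and $\Omega_2$ swapped yields the left-hand one. To set up, I would fix arbitrary $s>S(\Omega_1,\Omega_2)$ and $t>S(\Omega_2,\Omega_1)$, so that $\Omega_1\subset s\,\Omega_2$ and $\Omega_2\subset t\,\Omega_1$ (equivalently $\tfrac{1}{t}\,\Omega_2\subset\Omega_1$), postponing the passage $s\downarrow S(\Omega_1,\Omega_2)$, $t\downarrow S(\Omega_2,\Omega_1)$ to the very end.

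Next, given an arbitrary $f\in\mathcal{PH}(\Omega_2,\mathcal{B}(\mathcal{H}))$ with expansion \eqref{e-1.3-a} and coefficients $a_\alpha,b_\alpha$, I would pass to its dilate $g(z):=f(z/s)$. Since $\Omega_1\subset s\,\Omega_2$, the map $z\mapsto z/s$ carries $\Omega_1$ into $\Omega_2$, so $g$ is a well-defined bounded pluriharmonic function on $\Omega_1$ — its holomorphic and anti-holomorphic parts being those of $f$ precomposed with the (linear, holomorphic) scaling, so $g$ is again of the form \eqref{e-1.3-a} — and $\norm{g}_{\Omega_1}\le\norm{f}_{\Omega_2}$ because $\tfrac{1}{s}\Omega_1\subset\Omega_2$. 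Its coefficients are $s^{-|\alpha|}a_\alpha$ and $s^{-|\alpha|}b_\alpha$, and linearity of $U$ gives $U(s^{-|\alpha|}a_\alpha)=s^{-|\alpha|}U(a_\alpha)$, similarly for the $b_\alpha$.

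Then, for any $r<L_\lambda(\Omega_1,p,U)$, I would feed $g$ into the defining inequality \eqref{e-1.4-aaa} on $\Omega_1$ and convert the suprema from $\Omega_1$ to $\Omega_2$. Two elementary facts do the work: the homogeneity $\sup_{z\in r\Omega_1}\abs{z^\alpha}^p=r^{|\alpha|p}\sup_{z\in\Omega_1}\abs{z^\alpha}^p$, and the inclusion $\tfrac{1}{t}\Omega_2\subset\Omega_1$, which gives $\sup_{z\in\Omega_1}\abs{z^\alpha}\ge t^{-|\alpha|}\sup_{w\in\Omega_2}\abs{w^\alpha}$. Combining these, the factors $s^{-|\alpha|p}$, $r^{|\alpha|p}$, $t^{-|\alpha|p}$ collapse into a single $(r/(st))^{|\alpha|p}$, and each summand on the left of \eqref{e-1.4-aaa} for $g$ on $\Omega_1$ is bounded below by $(\norm{U(a_\alpha)}_Y^p+\norm{U(b_\alpha)}_Y^p)\sup_{w\in(r/(st))\Omega_2}\abs{w^\alpha}^p$. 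Since the whole sum is $\le\lambda^p\norm{g}_{\Omega_1}^p\le\lambda^p\norm{f}_{\Omega_2}^p$, this is exactly the second Bohr inequality \eqref{e-1.4-aaa} for $f$ on $\Omega_2$ at radius $r/(st)$. As $f$ was arbitrary, $L_\lambda(\Omega_2,p,U)\ge r/(st)$; letting $r\uparrow L_\lambda(\Omega_1,p,U)$ and then $s\downarrow S(\Omega_1,\Omega_2)$, $t\downarrow S(\Omega_2,\Omega_1)$ yields $L_\lambda(\Omega_1,p,U)\le S(\Omega_1,\Omega_2)S(\Omega_2,\Omega_1)\,L_\lambda(\Omega_2,p,U)$.

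I do not expect a serious obstacle: this is a standard dilation transference, and the only points needing a little care are (i) that the infima defining $S(\Omega_1,\Omega_2)$ and $S(\Omega_2,\Omega_1)$ need not a priori be attained, which is precisely why I work throughout with the strict inequalities $s>S(\Omega_1,\Omega_2)$, $t>S(\Omega_2,\Omega_1)$ and take limits only at the end, and (ii) checking that dilating a function of the form \eqref{e-1.3-a} keeps it of that form on the smaller Reinhardt domain, which is immediate since scaling is linear and holomorphic. In the Banach sequence space setting, where $S(B_Z,B_W)=\norm{Id:Z\to W}$ and the inclusions hold with the infimal constants themselves, the final limiting step can simply be omitted.
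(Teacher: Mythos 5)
Your proof is correct and is exactly the standard dilation/transference argument that the paper relies on (it omits the proof, pointing implicitly to the scalar-valued analogue in Defant--Garc\'{i}a--Maestre): dilate $f$ by $s$, apply the defining inequality \eqref{e-1.4-aaa} on $\Omega_1$, convert the suprema back to $\Omega_2$ via $\tfrac{1}{t}\Omega_2\subset\Omega_1$, and use the symmetry of $S(\Omega_1,\Omega_2)S(\Omega_2,\Omega_1)$ to get both inequalities. Your handling of the non-attained infima via $s>S(\Omega_1,\Omega_2)$, $t>S(\Omega_2,\Omega_1)$ and a limiting step is the right precaution, and the only unstated (harmless) point is that $st\ge 1$ guarantees $r/(st)\in[0,1]$ as required by the definition.
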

\begin{proof}
We first prove the right hand inequality.
Let $f \in \mathcal{PH}(\Omega_2,\mathcal{B}(\mathcal{H}))$ be of the form \eqref{e-1.3-a} with $\|f\|_{\Omega_2} \le 1$.

Let $\delta_1>0$.  
By the definition of $S(\Omega_1,\Omega_2)$,
\[
z \in \Omega_1
\quad \Rightarrow \quad
\frac{z}{S(\Omega_1,\Omega_2)+\delta_1} \in \Omega_2.
\]

Define
\[
g(z)=f\!\left(
\frac{z}{S(\Omega_1,\Omega_2)+\delta_1}
\right).
\]
Then $g \in \mathcal{PH}(\Omega_1,\mathcal{B}(\mathcal{H}))$
and $\|g\|_{\Omega_1}\le 1$.
\par
Now, for fix $0<\varepsilon<L_{\lambda}(\Omega_1,p,U)$, let $r = L_{\lambda}(\Omega_1,p,U)-\varepsilon$.
Since $r<L_{\lambda}(\Omega_1,p,U)$, we have
\[
\sum_{m=0}^{\infty}\sum_{|\alpha|=m}
\sup_{z\in r\Omega_1}
(\|U(a_\alpha)\|^p+\|U(b_\alpha)\|^p)
\left|
\left(
\frac{z}{S(\Omega_1,\Omega_2)+\delta_1}
\right)^\alpha
\right|^p
\le \lambda^p.
\]

Hence
\begin{equation} \label{e-compare-1}
\sum_{m=0}^{\infty}\sum_{|\alpha|=m}
\sup_{z\in \frac{r}{S(\Omega_1,\Omega_2)+\delta_1}\Omega_1}
(\|U(a_\alpha)\|^p+\|U(b_\alpha)\|^p)
|z^\alpha|^p
\le \lambda^p.
\end{equation}

Now let $\delta_2>0$.  
Using the definition of $S(\Omega_2,\Omega_1)$, we have
\[
z \in \Omega_2
\quad \Rightarrow \quad
\frac{z}{S(\Omega_2,\Omega_1)+\delta_2} \in \Omega_1.
\]
Thus, from \eqref{e-compare-1}, we obtain
\[
\sum_{m=0}^{\infty}\sum_{|\alpha|=m}
\sup_{z\in
	\frac{r}{(S(\Omega_1,\Omega_2)+\delta_1)(S(\Omega_2,\Omega_1)+\delta_2)}
	\Omega_2}
(\|U(a_\alpha)\|^p+\|U(b_\alpha)\|^p)
|z^\alpha|^p
\le \lambda^p.
\]

By definition of $L_{\lambda}(\Omega_2,p,U)$,
\[
\frac{r}
{(S(\Omega_1,\Omega_2)+\delta_1)(S(\Omega_2,\Omega_1)+\delta_2)}
\le
L_{\lambda}(\Omega_2,p,U).
\]
Letting $\delta_1,\delta_2 \to 0$ gives
\[
r
\le
S(\Omega_1,\Omega_2)S(\Omega_2,\Omega_1)
L_{\lambda}(\Omega_2,p,U).
\]

Since $\varepsilon$ is arbitrary, we deduce that
\[
L_{\lambda}(\Omega_1,p,U)
\le
S(\Omega_1,\Omega_2)S(\Omega_2,\Omega_1)
L_{\lambda}(\Omega_2,p,U).
\]
Interchanging $\Omega_1$ and $\Omega_2$ in the preceding inequality gives the left hand inequality. 
\end{proof}
The above lemma can be viewed as an analogue of \cite[Lemma 3.1(1)]{defant-2004}, where the authors considered only scalar valued holomorphic functions, with $U$ being the identity operator on $\mathbb{C}$ and the power $p=1$. The proof follows the same pattern as that of \cite[Lemma 2.5]{defant-2004}. For the sake of completeness, we include the proof here. The main difference lies in the fact that we work with vector valued pluriharmonic functions associated with a bounded operator and an arbitrary power $p$.
\par	
We now turn our attention to the nonvanishing part of Question \ref{qsn-1.3}. By deriving an explicit positive lower bound, we prove that  $L_{\lambda}(\Omega, p,U)>0$ for every nonzero bounded linear operator $U: \mathcal{B}(\mathcal{H}) \rightarrow Y$ satisfying $\norm{U}< \lambda$ with $\lambda>1$.
	
	\begin{thm} \label{thm-1.1}
		Let $X=\mathcal{B}(\mathcal{H})$ and $U: X \rightarrow Y$ be a non-null bounded linear operator such that $\norm{U}< \lambda$. Then, for $\lambda>1$ and $n \in \mathbb{N}$, we have
		$$
		L_{\lambda}(\Omega, p,U) \geq D. \frac{1}{n^{\frac{1}{p}}}\,\frac{1}{S(\Omega,\mathbb{D}^n)S(\mathbb{D}^n,\Omega)},
		$$ where 
		$$
		D=\begin{cases}
			\max \left\{\frac{1}{4 \lambda\,2^{\frac{1}{p}}}\left(\frac{\lambda^p - \norm{U}^p}{2\lambda^p - \norm{U}}\right)^{\frac{1}{p}}\, , \frac{1}{4 \lambda\,2^{\frac{1}{p}}}\left(\frac{\lambda^p - \norm{U}^p}{\lambda^p - \norm{U}^p +1}\right)^{\frac{1}{p}}\, \frac{1}{\norm{U}}\right\}\,  & \text{for $\norm{U}\geq \frac{1}{4 \lambda\,2^{\frac{1}{p}}}$},\\[3mm]
			\max \left\{\frac{1}{4 \lambda\,2^{\frac{1}{p}}}\left(\frac{\lambda^p - \norm{U}^p}{2\lambda^p - \norm{U}}\right)^{\frac{1}{p}}, \frac{1}{4 \lambda\,2^{\frac{1}{p}}}\left(\frac{\lambda^p - \norm{U}^p}{\lambda^p - \norm{U}^p +1}\right)^{\frac{1}{p}}\right\} & \text{for $0<\norm{U}< \frac{1}{4 \lambda\,2^{\frac{1}{p}}}$}.
		\end{cases}
		$$
	\end{thm}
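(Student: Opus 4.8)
The plan is to push the whole problem onto the unit polydisc $\mathbb{D}^n$, where the second Bohr radius collapses onto the first one, and then to feed in the lower bound for the first $\lambda$-powered Bohr radius of $U$. The first step is to apply Lemma~\ref{lem-3.5} with $\Omega_1=\Omega$ and $\Omega_2=\mathbb{D}^n$, which is legitimate since $\mathbb{D}^n$ is a bounded simply connected complete Reinhardt domain. Its left-hand inequality gives
$$ L_\lambda(\Omega,p,U)\;\ge\;\frac{L_\lambda(\mathbb{D}^n,p,U)}{S(\Omega,\mathbb{D}^n)\,S(\mathbb{D}^n,\Omega)}, $$
so the theorem reduces to the claim $L_\lambda(\mathbb{D}^n,p,U)\ge D\,n^{-1/p}$.

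For the polydisc, I would use that each monomial $z\mapsto|z^\alpha|$ attains its supremum over $r\mathbb{D}^n$ at the distinguished boundary independently of the others, so $\sup_{z\in r\mathbb{D}^n}\sum_\alpha(\cdots)|z^\alpha|^p=\sum_\alpha\sup_{z\in r\mathbb{D}^n}(\cdots)|z^\alpha|^p$; hence \eqref{e-1.4-a} and \eqref{e-1.4-aaa} coincide when $\Omega=\mathbb{D}^n$ and $L_\lambda(\mathbb{D}^n,p,U)=R_\lambda(\mathbb{D}^n,p,U)$. The needed estimate $R_\lambda(\mathbb{D}^n,p,U)\ge D\,n^{-1/p}$, with $D$ precisely the constant in the statement, is the polydisc instance of the first $\lambda$-powered Bohr radius bound for operator-valued pluriharmonic functions proved in \cite{Himadri-local-Banach-1}; substituting it into the displayed inequality completes the argument.

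Should a self-contained derivation of the polydisc bound be preferred, I would proceed as follows: normalise $\norm{f}_{\mathbb{D}^n,\mathcal{B}(\mathcal{H})}=1$; since $\sup_{z\in r\mathbb{D}^n}|z^\alpha|=r^{|\alpha|}$, the left side of \eqref{e-1.4-aaa} equals $\sum_{m\ge0}r^{mp}\sum_{|\alpha|=m}\bigl(\norm{U a_\alpha}^p+\norm{U b_\alpha}^p\bigr)$; bound the $m=0$ block by $\norm{U a_0}^p\le\norm{U}^p$ (as $a_0=f(0)$), and the homogeneous blocks of degree $m\ge1$ by the Schwarz--Pick-type coefficient inequalities for bounded $\mathcal{B}(\mathcal{H})$-valued pluriharmonic functions on $\mathbb{D}^n$; then summing the resulting geometric series and taking $r=D\,n^{-1/p}$ keeps the whole sum $\le\lambda^p$. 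The two entries inside the $\max$ defining $D$ correspond to two ways of controlling the first-order block, namely $\norm{U a_\alpha}\le\norm{U}\,\norm{a_\alpha}$ versus a direct bound exploiting $\norm{U}<\lambda$, while the case split $\norm{U}\gtrless\tfrac1{4\lambda\,2^{1/p}}$ is exactly what prevents the selected radius from exceeding $1$ and selects the stronger of the two candidates.

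The one quantitatively delicate point is the heart of the second step: obtaining the sharp coefficient estimates and carrying out the optimisation that produces the explicit $D$ and its two cases. This is, however, already the content of the first-Bohr-radius analysis in \cite{Himadri-local-Banach-1}, so the genuinely new ingredients here are just the geometric reduction supplied by Lemma~\ref{lem-3.5} and the polydisc identity $L_\lambda(\mathbb{D}^n,p,U)=R_\lambda(\mathbb{D}^n,p,U)$. I would also emphasise that the hypothesis $\lambda>1$ (strictly, not merely $\lambda\ge\norm{U}$) is precisely what allows the geometric-series estimate to close with a fixed positive margin, which in turn is what forces $L_\lambda(\Omega,p,U)$ to be strictly positive.
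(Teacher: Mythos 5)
Your proposal is correct and follows essentially the same route as the paper: reduce to the polydisc via Lemma~\ref{lem-3.5}, use the identity $L_\lambda(\mathbb{D}^n,p,U)=R_\lambda(\mathbb{D}^n,p,U)$, and invoke the lower bound for $R_\lambda(\mathbb{D}^n,p,U)$ from \cite{Himadri-local-Banach-1} together with $\sup_{z\in\mathbb{D}^n}\norm{z}_p=n^{1/p}$. The additional self-contained sketch is not needed, as the paper likewise treats the polydisc estimate as an imported result.
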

\begin{proof}[{\bf Proof}]
We begin by observing that $L_{\lambda}(\mathbb{D}^n, p,U)=R_{\lambda}(\mathbb{D}^n, p,U)$. It follows from \cite[Theorem 1.1]{Himadri-local-Banach-1} that for every $p \geq 1$,
\begin{equation*}
L_{\lambda}(\mathbb{D}^n, p,U)=R_{\lambda}(\mathbb{D}^n, p,U) \geq D.\, \frac{1}{\sup_{z \in \mathbb{D}^n}\norm{z}_{p}},
\end{equation*}
where $D$ as in the statement of Theorem \ref{thm-1.1}. An application of H\"{o}lder's inequality yields $\sup_{z \in \mathbb{D}^n}\norm{z}_{p}=n^{1/p}$ for any $1 \leq p < \infty$. Substituting this into the above inequality, we obtain
\begin{equation*}
	L_{\lambda}(\mathbb{D}^n, p,U) \geq D.\, n^{-\frac{1}{p}}.
	\end{equation*}
We now invoke Lemma \ref{lem-3.5}. Choosing $\Omega_{1}=\Omega$ and $\Omega_{2}=\mathbb{D}^n$ in that lemma, and combining it with the preceding estimate, we conclude that
\begin{equation*}
L_{\lambda}(\Omega, p,U) \geq  \frac{L_{\lambda}(\mathbb{D}^n, p,U)}{S(\Omega,\mathbb{D}^n)S(\mathbb{D}^n,\Omega)} \geq  D.\, n^{-\frac{1}{p}}\,\frac{1}{S(\Omega,\mathbb{D}^n)S(\mathbb{D}^n,\Omega)}. 
\end{equation*}
The proof is therefore complete.
\end{proof}	
Applying this theorem to the case $\Omega=B_{\ell^n_q}$, the Minkowski space, yields the following result.
\begin{cor}
	Let $Y,U$ and $\lambda$ be as in Theorem \ref{thm-1.1}. Then, for $1 \leq p < \infty$, $1 \leq q \leq \infty$ and $\lambda>1$,
	\begin{equation*}
		L_{\lambda}(B_{\ell^n_q}, p,U) \geq D\, \frac{1}{n^{\frac{1}{p}+\frac{1}{q}}},
	\end{equation*}
	where the constant $D$ is the same as in Theorem \ref{thm-1.1}.
\end{cor}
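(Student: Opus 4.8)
The plan is to specialize Theorem~\ref{thm-1.1} to the Minkowski domain $\Omega = B_{\ell^n_q}$, for which the geometric product $S(\Omega,\mathbb{D}^n)\,S(\mathbb{D}^n,\Omega)$ can be evaluated in closed form. Since $\mathbb{D}^n = B_{\ell^n_\infty}$ and both $\ell^n_q$ and $\ell^n_\infty$ are Banach sequence spaces, I would use the identity $S(B_Z,B_W) = \norm{Id:Z\to W}$ recorded just before Lemma~\ref{lem-3.5}, which converts the two quantities into operator norms of identity maps between finite-dimensional $\ell_q$-spaces.

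First I would compute $S(B_{\ell^n_q},\mathbb{D}^n) = \norm{Id:\ell^n_q\to\ell^n_\infty}$. From $\norm{x}_\infty \le \norm{x}_q$ for all $1\le q\le\infty$, with equality at $x=e_1$, this operator norm equals $1$. Next I would compute $S(\mathbb{D}^n,B_{\ell^n_q}) = \norm{Id:\ell^n_\infty\to\ell^n_q}$: if $\norm{x}_\infty\le 1$ then $\norm{x}_q\le n^{1/q}$, with equality at $x=(1,\dots,1)$, so this operator norm equals $n^{1/q}$ (read as $1$ when $q=\infty$). Multiplying the two gives $S(B_{\ell^n_q},\mathbb{D}^n)\,S(\mathbb{D}^n,B_{\ell^n_q}) = n^{1/q}$.

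Substituting this value and $\Omega=B_{\ell^n_q}$ into the lower bound of Theorem~\ref{thm-1.1} gives
$$
L_{\lambda}(B_{\ell^n_q},p,U) \;\ge\; D\cdot\frac{1}{n^{1/p}}\cdot\frac{1}{n^{1/q}} \;=\; D\cdot\frac{1}{n^{1/p+1/q}},
$$
with the hypotheses on $Y$, $U$, $\lambda$ carried over unchanged and with the constant $D$ literally the one from Theorem~\ref{thm-1.1}. I do not anticipate any real obstacle here: the only mild care needed concerns the non-symmetry of $S(\cdot,\cdot)$ (the two identity-operator norms must be taken in the correct directions) and the endpoint $q=\infty$, where $B_{\ell^n_\infty}=\mathbb{D}^n$ and the bound collapses to $L_{\lambda}(\mathbb{D}^n,p,U)\ge D\,n^{-1/p}$, which is precisely the intermediate estimate already obtained inside the proof of Theorem~\ref{thm-1.1}.
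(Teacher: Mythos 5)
Your proposal is correct and follows exactly the route the paper intends: the corollary is stated as a direct application of Theorem \ref{thm-1.1} with $\Omega=B_{\ell^n_q}$, and your computation $S(B_{\ell^n_q},\mathbb{D}^n)=\norm{Id:\ell^n_q\to\ell^n_\infty}=1$, $S(\mathbb{D}^n,B_{\ell^n_q})=\norm{Id:\ell^n_\infty\to\ell^n_q}=n^{1/q}$ is the evaluation the paper leaves implicit. No gaps.
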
	
	On the other hand, we obtain the following lower bound of the second $\lambda$-powered Bohr radius for operator-valued pluriharmonic functions whenever $U$ is the identity operator on $X=\mathcal{B}(\mathcal{H})$.
	\begin{cor}
		Let $X=\mathcal{B}(\mathcal{H})$ and $1<\lambda$. Then for all $n\in \mathbb{N}$ and $1\leq p < \infty$, we have
		\begin{equation*}
			L_{\lambda}(\Omega, p) \geq \frac{1}{2^{2+\frac{1}{p}}} \frac{\left(\lambda^p -1\right)^{\frac{1}{p}}}{\lambda^2}\, \frac{1}{n^{\frac{1}{p}}}\,\frac{1}{S(\Omega,\mathbb{D}^n)S(\mathbb{D}^n,\Omega)}.
		\end{equation*}
	\end{cor}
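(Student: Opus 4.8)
The plan is to specialize Theorem \ref{thm-1.1} to the identity operator $U = I : \mathcal{B}(\mathcal{H}) \to \mathcal{B}(\mathcal{H})$, for which $\norm{U} = 1$ (assuming $\mathcal{H}\neq\{0\}$), and then to evaluate the resulting constant $D$ explicitly. Since $\lambda > 1$ forces $\frac{1}{4\lambda\,2^{1/p}} < \tfrac14 < 1 = \norm{U}$, the first branch of the piecewise definition of $D$ applies, and substituting $\norm{U}=1$ there gives
$$D = \max\left\{\frac{1}{4\lambda\,2^{1/p}}\left(\frac{\lambda^p - 1}{2\lambda^p - 1}\right)^{1/p},\ \frac{1}{4\lambda\,2^{1/p}}\left(\frac{\lambda^p - 1}{\lambda^p}\right)^{1/p}\right\}.$$

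Next I would compare the two entries of the maximum. Because $\lambda > 1$ gives $\lambda^p > 1$, hence $2\lambda^p - 1 > \lambda^p$, we get $\frac{\lambda^p-1}{2\lambda^p-1} < \frac{\lambda^p-1}{\lambda^p}$, so the second entry dominates. Therefore
$$D = \frac{1}{4\lambda\,2^{1/p}}\cdot\frac{(\lambda^p-1)^{1/p}}{\lambda} = \frac{(\lambda^p - 1)^{1/p}}{2^{2+1/p}\,\lambda^2},$$
using $4\cdot 2^{1/p} = 2^{2+1/p}$. Plugging this value of $D$ into the conclusion of Theorem \ref{thm-1.1} (with this same $\Omega$ and $n$) reproduces exactly the claimed inequality, including the factor $n^{-1/p}\,\big(S(\Omega,\mathbb{D}^n)S(\mathbb{D}^n,\Omega)\big)^{-1}$.

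The argument is entirely routine and there is no real obstacle; the only points needing a moment's care are verifying that $\norm{U}=1 \ge \frac{1}{4\lambda\,2^{1/p}}$ so that the correct branch of the case distinction in Theorem \ref{thm-1.1} is invoked, and checking that the second entry of the maximum defining $D$ is the larger one, so that the clean closed form above is obtained rather than merely a lower bound for $D$.
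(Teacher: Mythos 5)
Your proposal is correct and is exactly the intended derivation: the corollary is a direct specialization of Theorem \ref{thm-1.1} to $U=I$ with $\norm{U}=1<\lambda$, and your evaluation of $D$ (first branch applies since $1\geq \frac{1}{4\lambda\,2^{1/p}}$; the second entry of the maximum wins because $2\lambda^p-1>\lambda^p$; and $\lambda^p-1+1=\lambda^p$) yields precisely $\frac{(\lambda^p-1)^{1/p}}{2^{2+1/p}\lambda^2}$. The paper gives no separate proof for this corollary, and your computation supplies the routine details correctly.
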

	\begin{rem}
		A natural question is whether Theorem \ref{thm-1.1} continues to hold when $\norm{U}=\lambda$. The example below demonstrates that, in this case, the constant $L_{\lambda}(\Omega, p,U)$ need not be strictly positive. Consequently, the assumption $\norm{U}<\lambda$ cannot be relaxed.
	\end{rem}
	\begin{example} \label{example-1.1}
		For a bounded simply connected complete Reinhardt domain $\Omega \subseteq \mathbb{C}^n$ and $k \in \mathbb{N}$, define 
		$F_{k}: \Omega \rightarrow \mathcal{B}(\mathcal{H})$ by $$F_{k}(z)=(i\, \cos \frac{1}{k}) I_{\mathcal{H}} + (\frac{1}{2} \sin \frac{1}{k})  I_{\mathcal{H}}z_{1}+(\frac{1}{2} \sin \frac{1}{k})  I_{\mathcal{H}} \overline{z_{1}}$$
		for $z=(z_{1}, \ldots,z_{n}) \in \Omega$, where $I_{\mathcal{H}}$ is the identity on $\mathcal{H}$. Let $U=\lambda \, I$, with $I$ the identity on $\mathcal{B}(\mathcal{H})$. Then $F_{k}(0)=(i\, \cos \frac{1}{k})  I_{\mathcal{H}}$. Clearly, $F_{k} \in \mathcal{PH}(\Omega,\mathcal{B}(\mathcal{H}))$ and $\norm{F_{k}}_{\Omega,\mathcal{B}(\mathcal{H})} \leq 1$. It is shown in \cite{Himadri-local-Banach-1} that there does not exist any $r_{0}>0$ such that \eqref{e-1.4-a} holds for all $f \in \mathcal{PH}(\Omega,\mathcal{B}(\mathcal{H}))$ for all $z \in r_{0}\, \Omega$. That is to say, $R_{\lambda}(\Omega, p,U)$ becomes zero. Since $L_{\lambda}(\Omega, p,U) \leq R_{\lambda}(\Omega, p,U)$, it immediately follows that $L_{\lambda}(\Omega, p,U)=0$. Therefore, we see that there is no non zero $r_{0}$ such that the inequality \eqref{e-1.4-aaa} holds for the functions $F_k$. This fact, in turn, implies that condition $\norm{U}<\lambda$ in Theorem \ref{thm-1.1} is necessary.
	\end{example}

We now turn to the second Bohr phenomenon for holomorphic functions. It is convenient to first clarify several conceptual differences between the study of Bohr’s theorem for vector valued holomorphic functions and that for pluriharmonic functions. In the pluriharmonic framework, attention is restricted to simply connected complete Reinhardt domains in $\mathbb{C}^n$. This assumption is imposed by the fact that the series representation \eqref{e-1.3-a} is valid only on such domains. In addition, we restrict our attention to pluriharmonic mappings taking values in $\mathcal{B}(\mathcal{H})$, and the associated operator $U$ is accordingly taken to act on $\mathcal{B}(\mathcal{H})$. This choice is motivated by the fact that complex conjugation is well defined in $\mathcal{B}(\mathcal{H})$, a property essential to our analysis. 
\vspace{1mm}

In contrast, the vector valued holomorphic framework is more flexible. Complete Reinhardt domains suffice, without assuming simple connectedness, because multivariable power series converge exactly on such domains and yield holomorphic functions on their domain of convergence. Moreover, since complex conjugation plays no role in this setting, there is no need to restrict attention to $\mathcal{B}(\mathcal{H})$-valued functions. Consequently, holomorphic functions with values in arbitrary complex Banach spaces may be considered.
\vspace{1mm}

We therefore study the Bohr radius problem for holomorphic functions within the following framework: bounded holomorphic mappings with values in an arbitrary complex Banach space $X$, defined on complete Reinhardt domains in $\mathbb{C}^n$. 
\vspace{1mm}

Following similar arguments to that given in the proof of Lemma \ref{lem-3.5}, one can show the following result, which compares the second Bohr radius constants of holomorphic functions for two domains. 
\begin{lem} \label{lem-3.5-a}
	Let $\Omega_{1}$ and $\Omega_{2}$ be two bounded complete Reinhardt domains in $\mathbb{C}^n$. Then we have $$\frac{B_{\lambda}(\Omega_{2}, p,U)}{S(\Omega_{1},\Omega_{2})S(\Omega_{2},\Omega_{1})} \, \leq B_{\lambda}(\Omega_{1}, p,U) \leq S(\Omega_{1},\Omega_{2})S(\Omega_{2},\Omega_{1}) \, B_{\lambda}(\Omega_{2}, p,U).
	$$
\end{lem}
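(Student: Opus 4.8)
The plan is to follow the proof of Lemma \ref{lem-3.5} almost verbatim; the holomorphic setting is in fact slightly simpler, since there is no conjugate part to track and the argument only uses the homogeneity of monomials together with the scaling behaviour of the sup-norm. Since the factor $S(\Omega_1,\Omega_2)\,S(\Omega_2,\Omega_1)$ is symmetric in the two domains, it suffices to establish the single inequality
\[
B_{\lambda}(\Omega_1, p,U)\ \geq\ \frac{B_{\lambda}(\Omega_2, p,U)}{S(\Omega_1,\Omega_2)\,S(\Omega_2,\Omega_1)};
\]
interchanging the roles of $\Omega_1$ and $\Omega_2$ then yields the companion upper bound.

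To prove this, fix $r<B_{\lambda}(\Omega_2,p,U)$ and real numbers $s_1>S(\Omega_1,\Omega_2)$, $s_2>S(\Omega_2,\Omega_1)$, so that $\Omega_1\subset s_1\Omega_2$ and $\Omega_2\subset s_2\Omega_1$ (valid for all such $s_1,s_2$ since $\{s>0:\Omega_1\subset s\Omega_2\}$ and $\{s>0:\Omega_2\subset s\Omega_1\}$ are up-sets, a complete Reinhardt domain $\Omega$ satisfying $t\Omega\subset\Omega$ for $0\le t\le 1$). Given $f\in\mathcal{H}(\Omega_1,X)$ with $f(z)=\sum_{\alpha}a_{\alpha}z^{\alpha}$, set $g(w):=f(w/s_2)$ for $w\in\Omega_2$. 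The inclusion $\Omega_2\subset s_2\Omega_1$ makes $g$ a well-defined bounded holomorphic function on $\Omega_2$, with power series $g(w)=\sum_{\alpha}s_2^{-|\alpha|}a_{\alpha}w^{\alpha}$ and $\norm{g}_{\Omega_2,X}=\sup_{z\in s_2^{-1}\Omega_2}\norm{f(z)}_X\leq\norm{f}_{\Omega_1,X}$.

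Now apply the defining inequality \eqref{e-1.2-a} of $B_{\lambda}(\Omega_2,p,U)$ to $g$ at radius $r$. For each multi-index $\alpha$, homogeneity of $w\mapsto w^{\alpha}$ gives $\sup_{w\in r\Omega_2}\abs{w^{\alpha}}=r^{|\alpha|}\sup_{w\in\Omega_2}\abs{w^{\alpha}}$, while the inclusion $\Omega_1\subset s_1\Omega_2$ gives $\sup_{w\in\Omega_2}\abs{w^{\alpha}}\geq s_1^{-|\alpha|}\sup_{z\in\Omega_1}\abs{z^{\alpha}}$; combining these and using linearity of $U$,
\[
\sup_{w\in r\Omega_2}\norm{U\!\bigl(s_2^{-|\alpha|}a_{\alpha}\bigr)w^{\alpha}}_Y^p
=\bigl(\tfrac{r}{s_2}\bigr)^{p|\alpha|}\norm{U(a_{\alpha})}_Y^p\sup_{w\in\Omega_2}\abs{w^{\alpha}}^p
\ \geq\ \sup_{z\in\frac{r}{s_1 s_2}\Omega_1}\norm{U(a_{\alpha})z^{\alpha}}_Y^p .
\]
Summing over $\alpha$ and invoking \eqref{e-1.2-a} for $g$ together with the norm estimate above,
\[
\sum_{\alpha}\sup_{z\in\frac{r}{s_1 s_2}\Omega_1}\norm{U(a_{\alpha})z^{\alpha}}_Y^p\ \leq\ \sum_{\alpha}\sup_{w\in r\Omega_2}\norm{U\!\bigl(s_2^{-|\alpha|}a_{\alpha}\bigr)w^{\alpha}}_Y^p\ \leq\ \lambda^p\norm{g}_{\Omega_2,X}^p\ \leq\ \lambda^p\norm{f}_{\Omega_1,X}^p .
\]
Since $f\in\mathcal{H}(\Omega_1,X)$ was arbitrary, this shows $B_{\lambda}(\Omega_1,p,U)\geq r/(s_1 s_2)$; letting $r\uparrow B_{\lambda}(\Omega_2,p,U)$ and $s_1\downarrow S(\Omega_1,\Omega_2)$, $s_2\downarrow S(\Omega_2,\Omega_1)$ gives the claimed lower bound, and swapping the roles of the two domains finishes the argument.

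The only genuine subtlety — and thus the point I would treat most carefully — is that $S(\Omega_1,\Omega_2)$ is defined as an infimum, so one cannot write $\Omega_1\subset S(\Omega_1,\Omega_2)\Omega_2$ outright; this is handled by the strict inequalities $s_1>S(\Omega_1,\Omega_2)$, $s_2>S(\Omega_2,\Omega_1)$ above and the passage to the limit at the end, exactly as in the proof of Lemma \ref{lem-3.5}. Everything else is routine bookkeeping with monomial homogeneity and the contraction estimate $\norm{g}_{\Omega_2,X}\leq\norm{f}_{\Omega_1,X}$.
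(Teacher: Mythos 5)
Your proof is correct and is precisely the standard scaling argument the paper has in mind (it omits the proof of Lemma \ref{lem-3.5-a}, referring to Lemma \ref{lem-3.5} and, implicitly, to the scalar-valued Lemma 3.1 of Defant--Garc\'{i}a--Maestre, both of which proceed exactly as you do: pull $f$ back via $w\mapsto w/s_2$, compare $\sup_{\Omega_2}|w^\alpha|$ with $\sup_{\Omega_1}|z^\alpha|$ using $\Omega_1\subset s_1\Omega_2$, and let $s_1,s_2$ tend to the infima). Your handling of the infimum via strict $s_1,s_2$ and the observation that it suffices to prove one inequality by symmetry are both fine.
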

This lemma can be viewed as a vector valued analogue of \cite[Lemma 3.1]{defant-2004}, where the authors focused on only scalar valued holomorphic functions.
\par
In the following theorem, we establish that the constant $B_{\lambda}(\Omega, p, U)$ is nonzero, which serves as the holomorphic analogue of Theorem \ref{thm-1.1}. However, the proof is similar to that of Theorem \ref{thm-1.1} and hence, omitted.
\begin{thm} \label{thm-1.1-a}
	Let $X$ and $Y$ be any complex Banach spaces and $U: X \rightarrow Y$ be a non-null bounded linear operator such that $\norm{U}< \lambda$. Then, for $\lambda>1$ and $n \in \mathbb{N}$, we have
	$$
	B_{\lambda}(\Omega, p,U) \geq  C.\,\frac{1}{n^{\frac{1}{p}}}\,\frac{1}{S(\Omega,\mathbb{D}^n)S(\mathbb{D}^n,\Omega)},$$
	where 
	$	C=\begin{cases}
		\max \left\{\left(\frac{\lambda^p - \norm{U}^p}{2\lambda^p - \norm{U}}\right)^{\frac{1}{p}}\, , \left(\frac{\lambda^p - \norm{U}^p}{\lambda^p - \norm{U}^p +1}\right)^{\frac{1}{p}}\, \frac{1}{\norm{U}}\right\}\,  & \text{for $\norm{U}\geq 1$},\\[3mm]
		\max \left\{\left(\frac{\lambda^p - \norm{U}^p}{2\lambda^p - \norm{U}}\right)^{\frac{1}{p}}, \left(\frac{\lambda^p - \norm{U}^p}{\lambda^p - \norm{U}^p +1}\right)^{\frac{1}{p}}\right\} & \text{for $0<\norm{U}< 1$}.
	\end{cases}
	$
\end{thm}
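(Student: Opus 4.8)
The plan is to mirror, in the holomorphic setting, the three–step reduction used in the proof of Theorem~\ref{thm-1.1}: pass to the polydisc, invoke the first $\lambda$-powered Bohr radius estimate for vector valued holomorphic functions established in \cite{Himadri-local-Banach-1}, and then transport the resulting bound back to a general complete Reinhardt domain $\Omega$ by means of Lemma~\ref{lem-3.5-a}.

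First I would record that $B_{\lambda}(\mathbb{D}^n,p,U)=K_{\lambda}(\mathbb{D}^n,p,U)$: on a dilate $r\mathbb{D}^n$ every monomial modulus $|z^{\alpha}|$ attains its supremum simultaneously at the distinguished point $(r,\dots,r)$, so the supremum of a sum of such terms coincides with the sum of their suprema, and hence the conditions \eqref{e-1.2} and \eqref{e-1.2-a} are equivalent on the polydisc (this identity is already noted in the introduction). Next, by the results of \cite{Himadri-local-Banach-1} on $K_{\lambda}(\Omega,p,U)$ one has, for every bounded complete Reinhardt domain $\Omega$ and every $p\ge 1$,
\[
K_{\lambda}(\Omega,p,U)\ \geq\ C\cdot\frac{1}{\sup_{z\in\Omega}\norm{z}_{p}},
\]
with $C$ exactly the constant in the statement of Theorem~\ref{thm-1.1-a}. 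Specialising to $\Omega=\mathbb{D}^n$ and using $\sup_{z\in\mathbb{D}^n}\norm{z}_{p}=n^{1/p}$ gives
\[
B_{\lambda}(\mathbb{D}^n,p,U)=K_{\lambda}(\mathbb{D}^n,p,U)\ \geq\ C\,n^{-1/p}.
\]
Finally, applying Lemma~\ref{lem-3.5-a} with $\Omega_{1}=\Omega$ and $\Omega_{2}=\mathbb{D}^n$ yields
\[
B_{\lambda}(\Omega,p,U)\ \geq\ \frac{B_{\lambda}(\mathbb{D}^n,p,U)}{S(\Omega,\mathbb{D}^n)\,S(\mathbb{D}^n,\Omega)}\ \geq\ C\,n^{-1/p}\,\frac{1}{S(\Omega,\mathbb{D}^n)\,S(\mathbb{D}^n,\Omega)},
\]
which is the claimed inequality.

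The only substantive point — hence the main obstacle if one insists on a self-contained argument instead of a citation — is the polydisc estimate, i.e. the production of the explicit constant $C$. By the standard tensorisation over the coordinates (which is precisely what manufactures the factor $1/\sup_{z\in\mathbb{D}^n}\norm{z}_{p}=n^{-1/p}$), this reduces to the one-variable bound $K_{\lambda}(\mathbb{D},p,U)\geq C$. For a bounded holomorphic $f(z)=\sum_{k\geq0}a_{k}z^{k}$ with $\norm{f}_{\mathbb{D},X}\le 1$, one isolates the constant term and estimates
\[
\sum_{k\geq 0}\norm{U(a_{k})}^{p}r^{kp}\ \leq\ \norm{U(a_{0})}^{p}+\norm{U}^{p}\sum_{k\geq 1}\norm{a_{k}}^{p}r^{kp},
\]
then inserts Schur/Bohr-type coefficient inequalities (such as $\norm{a_{k}}\le 1$ and $\norm{a_{k}}\le 1-\norm{a_{0}}^{2}$ for $k\ge 1$) and optimises over $r$. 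The optimisation naturally splits into the regimes $\norm{U}\ge 1$ and $\norm{U}<1$ — the break occurring because an admissible Bohr radius must satisfy $r\le 1$, so the "natural" optimiser has to be capped at $1$ when it exceeds it — and this is exactly where the two entries of the maximum defining $C$, together with the threshold $\norm{U}=1$, originate; the hypothesis $\lambda>1$ is what leaves enough slack to absorb the constant term $\norm{U(a_{0})}^{p}$. Since all of this bookkeeping has already been carried out in \cite{Himadri-local-Banach-1}, the proof here reduces to combining that estimate with the polydisc identity and Lemma~\ref{lem-3.5-a}, which is why it may be omitted.
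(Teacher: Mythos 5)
Your proposal is correct and follows essentially the same route the paper intends: the paper omits the proof of Theorem \ref{thm-1.1-a} precisely because it is the holomorphic mirror of the proof of Theorem \ref{thm-1.1}, namely the identity $B_{\lambda}(\mathbb{D}^n,p,U)=K_{\lambda}(\mathbb{D}^n,p,U)$, the polydisc lower bound $K_{\lambda}(\mathbb{D}^n,p,U)\geq C\,n^{-1/p}$ imported from \cite{Himadri-local-Banach-1}, and the transfer to general $\Omega$ via Lemma \ref{lem-3.5-a}. Your additional sketch of where the constant $C$ and the threshold $\norm{U}=1$ originate is consistent with the cited source and does not change the argument.
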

An immediate application of this theorem gives the following result for $\Omega=B_{\ell^n_q}$, the Minkowski space.
\begin{cor}
Let $X,Y,U$ and $\lambda$ be as in Theorem \ref{thm-1.1-a}. Then, for $1 \leq q \leq \infty$ and $\lambda>1$,
\begin{equation*}
B_{\lambda}(B_{\ell^n_q}, p,U) \geq C\, \frac{1}{n^{\frac{1}{p}+\frac{1}{q}}},
\end{equation*}
where the constant $C$ is the same as in Theorem \ref{thm-1.1-a}.
\end{cor}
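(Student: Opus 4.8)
The plan is to obtain the corollary as a direct specialization of Theorem~\ref{thm-1.1-a} to the domain $\Omega = B_{\ell^n_q}$; the only substantive work is to evaluate the geometric factor $S(\Omega,\mathbb{D}^n)\,S(\mathbb{D}^n,\Omega)$ appearing in that theorem. Since $B_{\ell^n_q}$ and $\mathbb{D}^n = B_{\ell^n_\infty}$ are unit balls of Banach sequence spaces, I would invoke the identity $S(B_Z,B_W) = \norm{Id : Z \rightarrow W}$ recalled just before Lemma~\ref{lem-3.5}, which reduces both $S$-quantities to operator norms of identity maps between the finite-dimensional spaces $\ell^n_q$.

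First I would compute $S(B_{\ell^n_q},\mathbb{D}^n) = \norm{Id : \ell^n_q \rightarrow \ell^n_\infty} = \sup_{\norm{z}_q \le 1}\norm{z}_\infty$. The elementary inequality $\norm{z}_\infty \le \norm{z}_q$, valid for all $1 \le q \le \infty$ with equality at any coordinate vector, gives $S(B_{\ell^n_q},\mathbb{D}^n) = 1$. Next, $S(\mathbb{D}^n,B_{\ell^n_q}) = \norm{Id : \ell^n_\infty \rightarrow \ell^n_q} = \sup_{\norm{z}_\infty \le 1}\norm{z}_q$; evaluating at $z = (1,\dots,1)$ shows this supremum equals $n^{1/q}$ (read as $1$ when $q = \infty$), and this choice is clearly optimal since $|z_i|\le 1$ for all $i$ forces $\norm{z}_q \le n^{1/q}$. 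Hence $S(B_{\ell^n_q},\mathbb{D}^n)\,S(\mathbb{D}^n,B_{\ell^n_q}) = n^{1/q}$.

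Substituting this value into the lower bound of Theorem~\ref{thm-1.1-a} yields
\[
B_{\lambda}(B_{\ell^n_q}, p,U) \;\ge\; C\,\frac{1}{n^{1/p}}\,\frac{1}{n^{1/q}} \;=\; \frac{C}{n^{1/p+1/q}},
\]
which is exactly the asserted estimate, with $C$ inherited verbatim from Theorem~\ref{thm-1.1-a}. There is no genuine obstacle here: the argument is a one-line substitution once the two Minkowski-gauge ratios are identified, and the hypotheses on $X,Y,U,\lambda$ (in particular $\norm{U}<\lambda$ and $\lambda>1$) are simply carried over. The only point deserving a moment's attention is the boundary case $q=\infty$, where $B_{\ell^n_\infty}=\mathbb{D}^n$, both $S$-factors collapse to $1$, and the statement degenerates to $B_{\lambda}(\mathbb{D}^n, p,U)\ge C\,n^{-1/p}$, consistent with the convention $1/q=0$.
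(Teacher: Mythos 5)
Your proposal is correct and is precisely the argument the paper intends: the corollary is stated as an immediate application of Theorem \ref{thm-1.1-a}, and the only content is the computation $S(B_{\ell^n_q},\mathbb{D}^n)=\norm{Id:\ell^n_q\rightarrow\ell^n_\infty}=1$ and $S(\mathbb{D}^n,B_{\ell^n_q})=\norm{Id:\ell^n_\infty\rightarrow\ell^n_q}=n^{1/q}$, which you carry out correctly. Your remark on the boundary case $q=\infty$ is a sensible sanity check and consistent with the equality $B_\lambda(\mathbb{D}^n,p,U)=K_\lambda(\mathbb{D}^n,p,U)$ noted in the paper.
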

It is worth noting that \cite[Proposition 3.3]{defant-2012} follows as a direct consequence of this theorem by choosing $p=1$. Furthermore, as an immediate corollary to the above Theorem, we obtain the following lower bound for the second $\lambda$-powered Bohr radius of $X$ valued holomorphic functions in the case where $U$ is the identity operator on $X$.
\begin{cor}
	Let $1<\lambda$. Then for all $n\in \mathbb{N}$ and $1\leq p < \infty$, we have
	\begin{equation*}
		B_{\lambda}(\Omega, p,X) \geq  \frac{\left(\lambda^p -1\right)^{\frac{1}{p}}}{\lambda}\, \frac{1}{n^{\frac{1}{p}}}\,\frac{1}{S(\Omega,\mathbb{D}^n)S(\mathbb{D}^n,\Omega)}.
	\end{equation*}
\end{cor}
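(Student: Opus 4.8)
The plan is to obtain this statement as a direct specialization of Theorem~\ref{thm-1.1-a} to the identity operator $U = I : X \to X$. First I would note that, by definition, $B_{\lambda}(\Omega, p, X) = B_{\lambda}(\Omega, p, I)$, and that $\norm{I} = 1$ (assuming, as we may, that $X \neq \{0\}$, the nontrivial case). Hence the hypothesis $\lambda > 1$ guarantees $\norm{U} = 1 < \lambda$, so Theorem~\ref{thm-1.1-a} applies, and since $\norm{U} = 1 \geq 1$ we land in its first case. That theorem then gives
\[
B_{\lambda}(\Omega, p, X) \geq C \cdot \frac{1}{n^{1/p}} \cdot \frac{1}{S(\Omega, \mathbb{D}^n)\, S(\mathbb{D}^n, \Omega)},
\qquad
C = \max\left\{ \left( \frac{\lambda^p - 1}{2\lambda^p - 1} \right)^{1/p},\ \left( \frac{\lambda^p - 1}{\lambda^p} \right)^{1/p} \right\}.
\]

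Next I would simplify the constant $C$. For $\lambda > 1$ one has $\lambda^p < 2\lambda^p - 1$, hence $\frac{\lambda^p - 1}{\lambda^p} > \frac{\lambda^p - 1}{2\lambda^p - 1}$, so the second term dominates and $C = (\lambda^p - 1)^{1/p}/\lambda$. Substituting this value of $C$ into the displayed bound yields exactly
\[
B_{\lambda}(\Omega, p, X) \geq \frac{(\lambda^p - 1)^{1/p}}{\lambda} \cdot \frac{1}{n^{1/p}} \cdot \frac{1}{S(\Omega, \mathbb{D}^n)\, S(\mathbb{D}^n, \Omega)},
\]
which is the asserted inequality.

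There is essentially no obstacle here: the only step that requires any thought is the elementary comparison of the two candidates defining $C$, and everything else is a verbatim reading of Theorem~\ref{thm-1.1-a} with $\norm{U}$ set equal to $1$. One could additionally remark, for orientation, that the case $p = 1$ gives the clean bound $B_{\lambda}(\Omega, 1, X) \geq \frac{\lambda - 1}{\lambda}\, \frac{1}{n}\, \frac{1}{S(\Omega, \mathbb{D}^n)\, S(\mathbb{D}^n, \Omega)}$.
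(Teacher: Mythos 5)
Your proof is correct and is exactly the intended derivation: the paper presents this corollary as an immediate consequence of Theorem~\ref{thm-1.1-a} with $U=I$, and your computation that the second candidate in the maximum becomes $(\lambda^p-1)^{1/p}/\lambda$ (and dominates the first since $2\lambda^p-1>\lambda^p$ for $\lambda>1$) is precisely the simplification needed. No gaps.
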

In particular, when $\Omega=B_{\ell^n_q}$, the Minkowski space, an application of this corollary yields the following result.
\begin{cor}
	Let $1<\lambda$ and $1\leq p < \infty$. Then for all $n\in \mathbb{N}$ and $1\leq q \leq \infty$, we have
	\begin{equation*}
		B_{\lambda}(B_{\ell^n_q}, p,X) \geq  
		\frac{\left(\lambda^p -1\right)^{\frac{1}{p}}}{\lambda}\, \frac{1}{n^{\frac{1}{p}+\frac{1}{q}}}.
	\end{equation*}
\end{cor}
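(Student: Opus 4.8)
The plan is to derive this bound as a direct specialization of the immediately preceding corollary --- the lower bound $B_{\lambda}(\Omega, p,X) \geq \frac{(\lambda^p-1)^{1/p}}{\lambda}\,\frac{1}{n^{1/p}}\,\frac{1}{S(\Omega,\mathbb{D}^n)S(\mathbb{D}^n,\Omega)}$, valid on an arbitrary bounded complete Reinhardt domain $\Omega$ --- applied with $\Omega = B_{\ell^n_q}$. Thus the whole matter reduces to computing the product $S(B_{\ell^n_q},\mathbb{D}^n)\,S(\mathbb{D}^n,B_{\ell^n_q})$ and verifying that it equals $n^{1/q}$.

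First I would recall that $\mathbb{D}^n = B_{\ell^n_\infty}$ and evaluate the two inclusion constants straight from the definition $S(\Omega_1,\Omega_2)=\inf\{s>0:\Omega_1\subset s\,\Omega_2\}$. On the one hand, $B_{\ell^n_q}\subset s\,\mathbb{D}^n$ holds precisely when $\norm{x}_\infty\leq s$ for every $x$ with $\norm{x}_q\leq 1$; since $\norm{x}_\infty\leq \norm{x}_q$ for all $1\leq q\leq\infty$, with equality for $x=e_1$, this gives $S(B_{\ell^n_q},\mathbb{D}^n)=1$. On the other hand, $\mathbb{D}^n\subset s\,B_{\ell^n_q}$ holds precisely when $\norm{x}_q\leq s$ for every $x$ with $\norm{x}_\infty\leq 1$; by H\"older's inequality the extremal vector is $(1,\ldots,1)$, whence $S(\mathbb{D}^n,B_{\ell^n_q})=n^{1/q}$ (with the usual convention $n^{1/\infty}=1$). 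Equivalently, one may invoke the identity $S(B_Z,B_W)=\norm{Id:Z\to W}$ recalled in Section 2 together with the classical values $\norm{Id:\ell^n_q\to\ell^n_\infty}=1$ and $\norm{Id:\ell^n_\infty\to\ell^n_q}=n^{1/q}$.

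Multiplying these, $S(B_{\ell^n_q},\mathbb{D}^n)\,S(\mathbb{D}^n,B_{\ell^n_q})=n^{1/q}$, and substituting $\Omega=B_{\ell^n_q}$ into the preceding corollary yields $B_{\lambda}(B_{\ell^n_q},p,X)\geq \frac{(\lambda^p-1)^{1/p}}{\lambda}\cdot\frac{1}{n^{1/p}}\cdot\frac{1}{n^{1/q}}=\frac{(\lambda^p-1)^{1/p}}{\lambda}\cdot\frac{1}{n^{1/p+1/q}}$, which is the asserted inequality. (When $q=\infty$ both Minkowski constants equal $1$ and the estimate reduces to the corresponding polydisc bound, consistent with $n^{1/\infty}=1$.) There is no genuine obstacle in this argument: its entire content is the elementary evaluation of the two inclusion constants between $B_{\ell^n_q}$ and the polydisc, and the only point deserving a moment's care is retaining the convention $n^{1/q}=1$ at $q=\infty$ so that the endpoint case is handled uniformly.
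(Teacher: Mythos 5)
Your proof is correct and coincides with the paper's intended argument: the corollary is obtained precisely by specializing the preceding general lower bound to $\Omega=B_{\ell^n_q}$ and evaluating $S(B_{\ell^n_q},\mathbb{D}^n)\,S(\mathbb{D}^n,B_{\ell^n_q})=1\cdot n^{1/q}=n^{1/q}$, exactly as you do. The only cosmetic remark is that the identity $\norm{Id:\ell^n_\infty\to\ell^n_q}=n^{1/q}$ follows from the direct estimate $\norm{x}_q^q\leq n\,\norm{x}_\infty^q$ rather than from H\"older's inequality per se, but this does not affect the validity of the argument.
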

By setting $p=1$ in this corollary, we recover \cite[Corollary 3.4]{defant-2012}, which we record below for the sake of completeness.
\begin{cor}
	Let $1<\lambda$ and $1\leq p < \infty$. Then for all $n\in \mathbb{N}$ and $1\leq q \leq \infty$, we have
	\begin{equation*}
	K_{\lambda}(\mathbb{D}^n, 1,X)=	B_{\lambda}(\mathbb{D}^n, 1,X) \geq  
		\frac{\left(\lambda -1\right)}{\lambda}\, \frac{1}{n}.
	\end{equation*}
\end{cor}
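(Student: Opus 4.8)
The plan is to obtain this statement as a direct specialization of the preceding corollary, so the argument reduces to a substitution of parameters together with two bookkeeping facts already recorded in Section~\ref{section-1}.

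First I would invoke the previous corollary, which asserts that for $1<\lambda$, $1\le p<\infty$ and $1\le q\le\infty$ one has $B_{\lambda}(B_{\ell^n_q},p,X)\ge \frac{(\lambda^p-1)^{1/p}}{\lambda}\,n^{-(\frac1p+\frac1q)}$. Specializing to $q=\infty$ and using the standard convention $1/\infty=0$, the exponent collapses to $1/p+1/q=1/p$; since $B_{\ell^n_\infty}=\mathbb{D}^n$, as recalled in the introduction, this gives
\[
B_{\lambda}(\mathbb{D}^n,p,X)\ \ge\ \frac{(\lambda^p-1)^{1/p}}{\lambda}\,\frac{1}{n^{1/p}}.
\]
Now setting $p=1$ yields $(\lambda^p-1)^{1/p}=\lambda-1$ and $n^{1/p}=n$, so that $B_{\lambda}(\mathbb{D}^n,1,X)\ge \frac{\lambda-1}{\lambda}\cdot\frac1n$, which is the right-hand side of the claimed inequality. (Alternatively, one could feed $\Omega=\mathbb{D}^n$, for which $S(\mathbb{D}^n,\mathbb{D}^n)=1$, into the corollary two items earlier and then put $p=1$, arriving at the same bound.)

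It remains to justify the left-hand equality $K_{\lambda}(\mathbb{D}^n,1,X)=B_{\lambda}(\mathbb{D}^n,1,X)$. For this I would use the observation from Section~\ref{section-1} that on the polydisc the supremum over $r\mathbb{D}^n$ of $|z^\alpha|^p$ factorizes into the product of the one-variable suprema, so the defining inequalities \eqref{e-1.2} and \eqref{e-1.2-a} coincide term by term; hence $B_{\lambda}(\mathbb{D}^n,p,U)=K_{\lambda}(\mathbb{D}^n,p,U)$ for every bounded linear operator $U$ and every $p\in[1,\infty)$, and in particular for $p=1$ and $U=I\colon X\to X$. Combining this identity with the lower bound above completes the proof. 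I do not anticipate any genuine obstacle: the statement is a bona fide corollary, and the only points requiring a moment's attention are the convention $q=\infty\Rightarrow 1/q=0$ with the identification $B_{\ell^n_\infty}=\mathbb{D}^n$, and the polydisc identity $K_{\lambda}=B_{\lambda}$ that supplies the left-hand equality.
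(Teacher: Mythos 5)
Your proposal is correct and matches the paper's own (very terse) derivation: the paper obtains this corollary exactly by setting $p=1$ (and implicitly $q=\infty$, so that $B_{\ell^n_\infty}=\mathbb{D}^n$) in the preceding corollary, and the equality $K_{\lambda}(\mathbb{D}^n,1,X)=B_{\lambda}(\mathbb{D}^n,1,X)$ is the polydisc identity already recorded in the introduction. Your write-up simply makes these substitutions explicit.
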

\section{Asymptotic estimates of the second Bohr radii constants in finite dimensional settings}
	We now turn to the second part of Question \ref{qsn-1.3}, which is the central focus of this paper, namely obtaining asymptotic estimates for the second Bohr radii constants $L_{\lambda}(\Omega, p,U)$ and $B_{\lambda}(\Omega, p,U)$. It is already known that second Bohr radii constants are dominated by their first Bohr radii counterparts, in the sense that $L_{\lambda}(\Omega, p,U) \leq R_{\lambda}(\Omega, p,U)$ and $B_{\lambda}(\Omega, p,U) \leq K_{\lambda}(\Omega, p,U)$. Consequently, any upper bounds established for $R_{\lambda}(\Omega, p,U)$ and $K_{\lambda}(\Omega, p,U)$ immediately yield corresponding bounds for $L_{\lambda}(\Omega, p,U)$ and $B_{\lambda}(\Omega, p,U)$. For this reason, it is natural to begin with a brief overview of recent developments on first Bohr radius constants.
	\par
	The asymptotic behavior of the first $\lambda$-Bohr radius $K_{\lambda}(\mathbb{D}^n, 1,U)$ for holomorphic functions is well understood and was determined in \cite{defant-2012}. Nevertheless, despite considerable progress, the precise value of the constant $K_{n}(\Omega)$ remains poorly understood. In particular, its exact value is unknown for dimensions $n>1$, even in the polydisc $\mathbb{D}^n$, where $\mathbb{D}^n:=\{z=(z_{1},\ldots,z_{n}) \in \mathbb{C}^n: |z_{j}|<1\, \mbox{for all}\, 1 \leq j \leq n\}$. It is known from \cite{boas-1997} that $K_n(\mathbb{D}^n)\leq 1/3$, and that the sequence $\{K_n(\mathbb{D}^n)\}_{n=1}^{\infty}$ is decreasing and converges to zero as $n \rightarrow \infty$. The authors of this paper also have established the following estimate for $n\geq 2$,
	$$
	\frac{1}{3\sqrt{n}} \leq K_n(\mathbb{D}^n) \leq 2\sqrt{\frac{\log\,n}{n}}.
	$$
	In particular, when	$n=2$
	this yields the bounds $0.23570226\leq K_2(\mathbb{D}^2) \leq 1/3=0.33333$. After nearly three decades, these estimates have been improved in recent years. Specifically, Knese \cite{knese-2025} showed in $2025$ that $K_2(\mathbb{D}^2)>0.3006$, while Baran, Pikul, Woerdeman, and Wojtylak \cite{baran-2026} proved in $2026$ that $K_2(\mathbb{D}^2) < 0.3177$. Taken together, these results confirm in particular that $K_n(\mathbb{D}^n) <1/3$ for $n \geq 2$.
	\vspace{1mm}
	
	Since the exact value of $K_{n}(\Omega)$ is unknown, attention naturally turns to its asymptotic behavior on general domains, especially on key domains such as the unit ball of Minkowski space and finite-dimensional complex Banach spaces. Beginning with the work of Dineen and Timoney \cite{Dineen-Timoney-1989}, and subsequently developed by Boas and Khavinson \cite{boas-1997,boas-2000}, with further extensions by Aizenberg \cite{aizn-2000a} and by Defant and Frerick \cite{defant-2011-lpn}, it has been shown that for every $1\leq q \leq \infty$ and all $n\in \mathbb{N}$, there exist constants $C,D>0$ such that 
	\begin{equation*}
		\frac{1}{C}\, \left(\frac{\log\,n}{n}\right)^{1-\frac{1}{\min\{q,2\}}} \leq K_{n}(B_{\ell^n _q}) \leq D \,\left(\frac{\log\,n}{n}\right)^{1-\frac{1}{\min\{q,2\}}}.
	\end{equation*}
	The one-dimensional constant $K_{\lambda}(\mathbb{D}, 1)$ was first studied by Bombieri \cite{bombieri-1962}, who determined its exact value for $\lambda \in [1,\sqrt{2}]$. Its precise asymptotic behavior as $\lambda\rightarrow \infty$ was later analyzed by Bombieri and Bourgain \cite{bombieri-2004}. Building on these developments, Defant {\it et al.} have introduced the more general constant $K_\lambda(\mathbb{D}^n,1,U)$ in \cite{defant-2012} and established its asymptotic estimates in both finite- and infinite-dimensional Banach spaces $X$. 
	\par
	On a related note, the constants $K(\mathbb{D},p)$ and $K(\mathbb{D}^n,p)$ were first studied by Djakov and Ramanujan \cite{Djakov & Ramanujan & J. Anal & 2000}, and later analogues of these results in Hardy spaces were developed by B\'{e}n\'{e}teau, Dahlner, and Khavinson \cite{bene-2004}. More recently, it has been shown in \cite{Himadri-local-Banach-1} that the constants $R_{\lambda}(\Omega, p,U)$ and $K_{\lambda}(\Omega, p,U)$ have the same asymptotic order up to multiplicative constants.
	\vspace{1mm}
	
	On the other hand, the precise values of the second Bohr radius constants in higher dimensions are still unknown. Clearly, $K_{\lambda}(\mathbb{D}, p,U)=B_{\lambda}(\mathbb{D}, p,U)$, and hence, by \cite{bombieri-2004}, one has
	$$
	B_{\lambda}(\mathbb{D}, 1) =\frac{1}{3\lambda- 2\,\sqrt{2\,(\lambda^2 -1)}}\,\,\, \mbox{for all}\,\,\, 1\leq \lambda \leq \sqrt{2}.
	$$
	\noindent Although several notable advances have been made, the multidimensional second Bohr radius problem remains substantially more complicated than its one-dimensional counterpart. In this direction, the first progress was achieved by Aizenberg \cite{aizn-2000a}, who showed that for any bounded complete Reinhardt domain $\Omega \subset \mathbb{C}^n$, with $n \geq 2$,
	\begin{align*}
	B(\Omega, 1) \geq 1- \left(\frac{2}{3}\right)^{\frac{1}{n}}
	\end{align*}
	and moreover that $B(B_{\ell^n_1}, 1)< 0.446663/n$. Furthermore, due to Boas \cite{boas-2000}, it is known that for $1\leq q \leq \infty$ and $n>1$, 
	\begin{equation} \label{e-lower-snd}
		\frac{1}{3}\, \left(\frac{1}{n}\right)^{\frac{1}{2}+\frac{1}{\max\{q,2\}}} \leq B(B_{\ell^n_q},1) \leq 4 \,\left(\frac{\log\,n}{n}\right)^{\frac{1}{2}+\frac{1}{\max\{q,2\}}}.
	\end{equation}
In contrast, the present article introduces the second Bohr radius constant $L_{\lambda}(\Omega, p,U)$ for pluriharmonic functions, a notion that, to the best of our knowledge, has not been previously investigated. Consequently, the exact value of this constant is not yet known. A natural and fundamental question, therefore, concerns the asymptotic behavior of these constants. Addressing this problem, however, turns out to be highly nontrivial and poses significant challenges.
	\vspace{1mm}	
		
	A main contribution of this paper is the determination of the asymptotic behavior of the second $\lambda$-Bohr radius for vector valued holomorphic and pluriharmonic functions on any complete Reinhardt domains using tools from local Banach space theory.
	\vspace{1mm}
	
	We now collect several facts that will be instrumental in what follows. Let $Z=(\mathbb{C}^n, ||.||)$ be an $n$-dimensional complex Banach space for which the canonical basis vectors $e_{k}$ form a normalized $1$-unconditional basis. Equivalently, the open unit ball $B_{Z}$ of $Z$ is a complete Reinhardt domain in $\mathbb{C}^n$. There is a one-to-one correspondence between bounded convex complete Reinhardt domains $\Omega \subseteq\mathbb{C}^n$ and the open unit balls of norms on $\mathbb{C}^n$ for which the canonical basis vectors $e_{k}$ form a normalized $1$-unconditional basis. Indeed, given such a domain $\Omega$, its Minkowski functional $p_{\Omega}:\mathbb{C}^n \rightarrow \mathbb{R}_{+}$ defined by $$p_{\Omega}(z):= \inf \left\{t>0 : z/t \in \Omega\right\}$$
	is a norm on $\mathbb{C}^n$, and $\Omega$ coincides with the open unit ball of the Banach space $(\mathbb{C}^n,p_{\Omega})$. Conversely, if $Z=(\mathbb{C}^n, ||.||)$ is a Banach space whose canonical basis is normalized $1$-unconditional, then its open unit ball $B_{	Z}$ is necessarily a bounded convex complete Reinhardt domain in $\mathbb{C}^n$. 
	\par
	As a consequence, investigating the constants $B_{\lambda}(\Omega, p,U)$ and $L_{\lambda}(\Omega, p,U)$ for unit balls $\Omega=B_{Z}$ of finite dimensional complex Banach spaces $(\mathbb{C}^n, ||.||)$ with the normalized $1$-unconditional canonical bases is equivalent to studying these constants over bounded complete Reinhardt domains $\Omega \subseteq\mathbb{C}^n$.
	\vspace{1mm}
	
	 We now proceed to state our first result in this section, which is the asymptotic behavior of the constant $L_{\lambda}(\Omega, p,U)$ when $\Omega=B_{Z}$ and $U$ is the identity operator on $X=\mathcal{B}(\mathcal{H})$. Recall that a Schauder basis $\{w_{k}\}$ of a Banach space $W$ is said to be $1$-unconditional if $\chi(\{w_k\})=1$ (see more details in subsection $3.1$). We usually denote the canonical basis vectors of $Z=(\mathbb{C}^n, ||.||)$ by $e_{k}$, $k=1, \ldots,n$. 
	\begin{thm} \label{thm-1.2}
		Let $Z=(\mathbb{C}^n, ||.||)$ be a Banach space such that $\chi(\{e_k\}^n_{k=1})=1$. Let $X=\mathcal{B}(\mathcal{H})$ be a finite dimensional complex Banach space and $\lambda>1$. Then for $1 \leq p < \infty$, $1 \leq q \leq \infty$,
		\begin{equation*}
			L_{\lambda}(B_{Z}, p,X) \geq	\begin{cases}
				E_{1}(X)\,\frac{1}{s\,t\, n^{\frac{1}{q}}}  \max \left\{\frac{1}{\sqrt{n}\, \norm{\mathrm{Id}: \ell^n_{2} \rightarrow Z}}, \, \frac{1}{e\, \norm{\mathrm{Id}: Z \rightarrow \ell^n_{1}}}\right\}\, \left(\frac{\lambda^p -1}{2\lambda^p - 1}\right)^{\frac{1}{p}} & \text{for $p=1$}, \\[2mm]
				E_{3}\, \frac{1}{s}\,\norm{\mathrm{Id}:\ell^n _2 \rightarrow Z}^{-\frac{2}{p}}\, \left(\frac{\lambda^p -1}{2\lambda^p - 1}\right)^{\frac{1}{p}}\, & \text{for $p \geq 2$}, \\[2mm]
				
				\frac{E_{2}(X)}{s\left(t n^{\frac{1}{q}}\right)^{\frac{(2-p)}{p}}} \max \left\{\frac{1}{(\sqrt{n})^{1-\theta} \norm{Id: \ell^n_{2} \rightarrow Z}},  \frac{\norm{\mathrm{Id}:\ell^n _2 \rightarrow Z}^{-\theta}}{e^{(1-\theta)} \norm{\mathrm{Id}: Z \rightarrow \ell^n_{1}}^{1-\theta}}\right\}\left(\frac{\lambda^p -1}{2\lambda^p - 1}\right)^{\frac{1}{p}} \hspace{-3mm} & \text{for $1<p< 2$}, 
			\end{cases}
		\end{equation*}
		where $\theta = (2(p-1))/p$, $s:=\norm{\mathrm{Id}: Z \rightarrow \ell^n_q}$, and $t:=\norm{\mathrm{Id}:  \ell^n_q\rightarrow Z}$. Here, $E_{1}(X)$, $E_{2}(X)$, and $E_{3}$ are positive constants: $E_{1}(X)$ depends only on $X$, $E_{2}(X)$ depends on both $X$ and $p$, while $E_{3}$ is independent of $X$ and depends only on $p$. Furthermore,
		\begin{equation*}
			L_{\lambda}(B_{Z}, p,X) \leq d\,\lambda^{\frac{1}{\log \,n}}\,b_{[\log\,n]}(B_Z)\, \,\frac{\norm{\mathrm{Id}:Z \rightarrow \ell^n_2}}{n^{\frac{1}{p}}}
		\end{equation*}
		for some constant $d>0$. Here $b_m(\Omega):=\left(\inf_{|\alpha|=m}\sup_{z \in \Omega} |z^{\alpha}|\right)^{-\frac{1}{m}}$, $m \in \mathbb{N}$ for a complete Reinhardt domain $\Omega$.
	\end{thm}
\begin{rem}
	The correspondence preceding Theorem \ref{thm-1.2} allows for an extension of Theorem \ref{thm-1.2} to bounded, simply connected, convex complete Reinhardt domains. Furthermore, by combining Theorem \ref{thm-1.2} with Lemma \ref{lem-3.5}, we obtain asymptotic estimates for all bounded, simply connected complete Reinhardt domains, not necessarily convex. It is worth noting that Theorem \ref{thm-1.2} may be regarded as a pluriharmonic analogue of \cite[Proposition 3.2 and Theorem 3.3]{defant-2004}, where the authors treated only scalar-valued holomorphic functions with the power $p=1$.
\end{rem}
In particular, for the Minkowski spaces $Z=\ell^n _q$, by proceeding along the same lines as in the proof of Theorem \ref{thm-1.2} and using the estimates obtained in Lemma \ref{lem-3.1}, we derive the following bounds. We omit the details.
\begin{cor} \label{cor-1.4-snd}
	Let $X=\mathcal{B}(\mathcal{H})$ be finite dimensional and $\lambda>1$. Then
	\begin{equation*}
		L_{\lambda}(B_{\ell^n _q}, p,X) \geq	\begin{cases}
			E'_{3}(X)\,\left(\frac{\lambda^p -1}{2\lambda^p - 1}\right)^{\frac{1}{p}} \, \frac{1}{n^{1/q}} \left(\frac{\log n}{n}\right)^{ \left(1- \frac{1}{\min\{q,2\}}\right)} & \text{for $p=1$}, \\[2mm]
			E'_{2} \left(\frac{\lambda^p -1}{2\lambda^p - 1}\right)^{\frac{1}{p}}\, n^{-\frac{1}{p}} & \text{for $p \geq q$}, \\[2mm]
			
			E_{4}(X)\,  \left(\frac{\lambda^p -1}{2\lambda^p - 1}\right)^{\frac{1}{p}} \,n^{-\left(\frac{p-1}{p(q-1)} + \frac{2-p}{pq}\right)}\, \left( \frac{\log n}{n}\right)^{\left(1- \frac{1}{\min\{q,2\}}\right)\, \frac{q-p}{p(q-1)}} & \text{for $1<p< q$}. 
		\end{cases}
	\end{equation*}
	Here, $E'_{3}(X)$, $E'_{2}$, and $E_{4}(X)$ are positive constants: $E'_{3}(X)$ depends only on $X$, $E_{4}(X)$ depends on both $X$ and $p$, while $E'_{2}$ is independent of $X$ and depends only on $p$. Furthermore,
	\begin{equation*}
		L_{\lambda}(B_{\ell^n_q}, p,X) \leq d\,\lambda^{\frac{1}{\log \,n}}\,b_{[\log\,n]}(B_{\ell^n_q})\, \,\frac{\norm{\mathrm{Id}:\ell^n_q \rightarrow \ell^n_2}}{n^{\frac{1}{p}}}
	\end{equation*}
	for some constant $d>0$.
\end{cor}
\begin{rem}
We now highlight two important remarks. First, by arguments analogous to those used in the proof of Theorem \ref{thm-1.2}, one can investigate the asymptotic behaviour of the second Bohr radius constant $B_{\lambda}(B_{Z}, p,X)$ for holomorphic functions and show that the asymptotic behaviors of $L_{\lambda}(B_{Z}, p,X)$ and $B_{\lambda}(B_{Z}, p,X)$ coincide up to a multiplicative constant. 
\par
Second, we note a difference between the lower bound of $L_{\lambda}(B_{\ell^n_q}, p,X)$ obtained in Corollary \ref{cor-1.4-snd} and the one given in \eqref{e-lower-snd}. In the lower bound appearing in \eqref{e-lower-snd}, no logarithmic term occurs, whereas in our result in Theorem \ref{thm-1.2} the lower bound contains a logarithmic factor. A similar phenomenon was also observed by the author in \cite{das-2024-CMB}, where only scalar valued holomorphic functions were considered, with $U$ being the identity on $\mathbb{C}$, and with $\lambda=1$ and $p=1$.
\end{rem}
\subsection{Basic definitions and notations}
Throughout the paper we employ standard notation and terminology from Banach space theory. All Banach spaces $W$ are complex. Their topological duals are denoted by $W^{*}$, and their open unit balls by $B_{W}$. A Banach space $W$ is said to have cotype $t\in [0,\infty]$ if there exists a constant $C>0$ such that for every finite family of vectors $w_{1},\ldots,w_{n}\in W$, 
$$\left(\sum_{k=1}^{n}\norm{w_{k}}^t\right)^{\frac{1}{t}} \leq C \left(\int_{0}^{1}\norm{\sum_{k=1}^{n}r_{k}(s)w_{k}}^2\, ds\right)^{\frac{1}{2}},$$
where $\{r_{k}\}$ denotes the sequence of Rademacher functions on $[0,1]$. We define 
$$\operatorname{Cot}(W):=\inf\{2 \leq t \leq \infty: W\, \mbox{has cotype}\, t\}.
$$
It is well known that every Banach space has cotype $\infty$. We adopt the convention $1/\operatorname{Cot}(W)=0$ whenever $\operatorname{Cot}(W)=\infty$. 
\par 
A Schauder basis $\{w_{k}\}$ of a Banach space $W$ is called unconditional if there exists a constant $c\geq 1$ such that $$\norm{\sum_{j=1}^{k}\epsilon_{j}\mu_{j}w_{j}} \leq c \norm{\sum_{j=1}^{k}\mu_{j}w_{j}}
$$
for all $k \in \mathbb{N}$, all scalars $\mu _{j} \in \mathbb{C}$, and all $\epsilon_{j} \in \mathbb{C}$ with $|\epsilon_{j}| \leq 1$.  The smallest such constant is denoted by $\chi(\{w_{k}\})$ and is called unconditional basis constant of $\{w_{k}\}$. The basis is said to be $1$-unconditional if $\chi(\{w_{k}\})=1$. Moreover, the unconditional basis constant of a Banach space $W$ is defined by $\chi(W):=\inf \chi(\{w_{k}\})\in [1,\infty]$, where the infimum is taken over all unconditional bases $\{w_{k}\}$ of $W$. 
\par
Let $Z=(\mathbb{C}^n, ||.||)$ be a Banach space and $Y$ any Banach space. Fix $m \in \mathbb{N}$. We denote by $\mathcal{P}(^m Z,Y)$ the space of all $m$-homogeneous polynomials $Q:Z \rightarrow Y$ of the form $Q(z)=\sum_{|\alpha|=m}c_{\alpha}z^{\alpha}$ endowed with the norm $\norm{Q}_{\mathcal{P}(^m Z,Y)}:=\sup_{z \in B_{Z}}|Q(z)|$. Furthermore, if $\Omega \subset \mathbb{C}^n$ is a bounded complete Reinhardt domain, then denote by $\mathcal{P}(^m \Omega,Y)$ the space of all $m$-homogeneous $Y$ valued pluriharmonic polynomials $Q$ considered as defined on $\Omega$.  The unconditional basis constant of the monomial basis $\{z^{\alpha}: |\alpha|=m\}$ in $\mathcal{P}(^m Z,Y)$ is denoted by $\chi_{M}(\mathcal{P}(^m Z,Y))$.
Denote by $\mathcal{PH}(^m Z,\mathcal{B}(H))$ the space of all $m$-homogeneous operator valued pluriharmonic polynomials $P:Z \rightarrow \mathcal{B}(H)$ of the form 
\begin{equation} \label{e-1.3-aa}
	P(z)= \sum_{|\alpha|=m} a_{\alpha}\, z^{\alpha} +  \sum_{|\alpha|=m} b^{*}_{\alpha}\, \bar{z}^{\alpha}
\end{equation} 
and set $\norm{P}_{\mathcal{PH}(^m Z,\mathcal{B}(\mathcal{H}))}:=\sup_{z \in B_{Z}}||P(z)||_{\mathcal{B}(\mathcal{H})}$. Moreover, if $\Omega \subset \mathbb{C}^n$ is a bounded simply connected complete Reinhardt domain, then denote by $\mathcal{PH}(^m \Omega,\mathcal{B}(\mathcal{H}))$ the space of all $m$-homogeneous operator valued pluriharmonic polynomials $P$ considered as defined on $\Omega$ of the form \eqref{e-1.3-aa}. 
\par A bounded linear operator $U:W\rightarrow Y$ between Banach spaces $W$ and $Y$ is called $(r,t)$-summing, with $r,t \in [1,\infty)$, if there exists a constant $C>0$ such that for every choice of finite family $w_{1}, \ldots,w_{n} \in W$, one has
$$\left(\sum_{k=1}^{n} \norm{U(w_{k})}^r\right)^{1/r} \leq C \, \sup_{\phi \in B_{X^{*}}} \left(\sum_{k=1}^{n} |\phi(w_{k})|^t\right)^{1/t}.
$$
The smallest constant $C$ satisfying this inequality is denoted by $\pi_{r,t}(U)$. In the special case $r=t$, the operator $U$ is called $r$- summing, and we write $\pi_{r}(U):=\pi_{r,r}(U)$.
\subsection{Second Bohr radii for homogeneous plynomials}
In the study of first Bohr radii, a standard approach for obtaining nontrivial estimates is to begin with the study of $m$-homogeneous polynomials. The following definition introduces the $m$-homogeneous analogue of second Bohr radii for holomorphic and pluriharmonic functions, which will serve as a foundation for our subsequent developments. Let $\Omega\subset \mathbb{C}^n$ be a simply connected complete Reinhardt domain and $n\in \mathbb{N}$. Let $U:\mathcal{B}(\mathcal{H})\rightarrow Y$ be a bounded liner operator and $\norm{U} \leq \lambda$. For $1 \leq p < \infty$, the second $\lambda$-powered Bohr radius of $U$ with respect to $\mathcal{PH}(^m \Omega,\mathcal{B}(\mathcal{H}))$, denoted by $L^m_{\lambda}(\Omega, p,U)$, is defined to be the supremum of all $r\geq 0$ such that, for every homogeneous pluriharmonic polynomials $P:\Omega \rightarrow \mathcal{B}(\mathcal{H})$ of the form \eqref{e-1.3-aa}, the following inequality 
\begin{equation} \label{e-snd-3.2}
	 \sum_{|\alpha|=m} \sup_{z \in r\Omega}\, (\norm{U(a_{\alpha})}^p_{Y} + \norm{U(b_{\alpha})}^p_{Y})|z^\alpha|^p \leq \lambda^p\,\norm{f}^p_{\Omega,\mathcal{B}(\mathcal{H})} 
\end{equation}
holds. Here, $\norm{f}_{\Omega,\mathcal{B}(\mathcal{H})}:=\sup_{z \in \Omega}\,\norm{f(z)}_{\mathcal{B}(\mathcal{H})}$. For short, write $L^m(\Omega, p,U):=L^m_{1}(\Omega, p,U)$, $L^m_{\lambda}(\Omega, p,X):=L^m_{\lambda}(\Omega, p,U)$ whenever $U=I:X\rightarrow X$, $L^m(\Omega, p,X):=L^m_{1}(\Omega, p,X)$, $L^m_{\lambda}(\Omega, p):=L^m_{\lambda}(\Omega, p,\mathbb{C})$, and $L^m(\Omega,p):=L^m_1(\Omega,p)$. It is straightforward to verify that $L^m_{\lambda}(\Omega, p,U)=\lambda^{1/m}\,L^m(\Omega, p,U)$.
\par 
In a similar manner, one can define the constant $B^m_{\lambda}(\Omega, p,U)$ for the class $\mathcal{P}(^m \Omega,W)$ {\it i.e.}, for any Banach space $W$ valued $m$-homogeneous holomorphic polynomial $Q(z)=\sum_{|\alpha|=m}c_{\alpha}\, z^{\alpha}$. Clearly, $B^m_{\lambda}(\Omega, p,U)=\lambda^{1/m}\,B^m(\Omega, p,U)$. On the other hand, the first Bohr radii constants for holomorhic and pluriharmonic functions, denoted respectively by $B^m_{\lambda}(\Omega, p,U)$ and $L^m_{\lambda}(\Omega, p,U)$, have already been considered in \cite{Himadri-local-Banach-1}. It is clear that, $B^m_{\lambda}(\Omega, p,U) \leq K^m_{\lambda}(\Omega, p,U)$ and $L^m_{\lambda}(\Omega, p,U) \leq R^m_{\lambda}(\Omega, p,U)$, with equality holding whenever $\Omega=\mathbb{D}^n$. Morever, it is worth noting that $B_{\lambda}(\Omega, p,U) \leq B^m_{\lambda}(\Omega, p,U)$, $K_{\lambda}(\Omega, p,U) \leq K^m_{\lambda}(\Omega, p,U)$, $R_{\lambda}(\Omega, p,U) \leq R^m_{\lambda}(\Omega, p,U)$, and $L_{\lambda}(\Omega, p,U) \leq L^m_{\lambda}(\Omega, p,U)$. Finally, in light of the Bohnenblust–Hille inequality, it is known that the constant $K^m(\mathbb{D}^n,1)=B^m(\mathbb{D}^n,1)$ is hypercontractive; see \cite{defant-2011}.
\vspace{2mm}

The following result due to \cite{Himadri-local-Banach-1}, is a coefficient-type Schwarz–Pick lemma for pluriharmonic functions defined on general complete Reinhardt domains. This lemma constitutes a fundamental ingredient in the proofs of several main results of this article.
\begin{lem} \cite{Himadri-local-Banach-1}\label{lem-3.1} 
	Let $f \in \mathcal{PH}(Z,X)$ be of the form \eqref{e-1.3-a}. Then for all $|\alpha|=m\geq 1$, we have $\norm{\sum_{|\alpha|=m}(a_{\alpha} \pm b_{\alpha})\, z^{\alpha}}_{B_{Z},X} \leq 4 \,\norm{\norm{f}_{B_{Z},X}\,I-\real(a_{0})}$. Moreover, if $B_{Z}=B_{\ell^n_{q}}$ then $\norm{a_{\alpha}+b_{\alpha}}	\leq \frac{4}{\pi} \, \rho_{\alpha}\, \norm{\sum_{|\alpha|=m}(a_{\alpha}+b_{\alpha})\, z^{\alpha}}_{B_{Z},X}$,  $\norm{a_{\alpha}-b_{\alpha}}	\leq \frac{4}{\pi}\,\rho_{\alpha}\, \norm{\sum_{|\alpha|=m}(a_{\alpha}-b_{\alpha})\, z^{\alpha}}_{B_{Z},X}$, where $\rho_{\alpha}:=\left(\frac{|\alpha|^{|\alpha|}}{\alpha^{\alpha}}\right)^{1/q}$.
	\comment{\begin{enumerate}
			\item $\norm{\sum_{|\alpha|=m}(a_{\alpha}+b_{\alpha})\, z^{\alpha}}_{B_{Z},X} \leq 4 \,\gamma_{0} \norm{f}_{B_{Z},X}$,
			$\norm{\sum_{|\alpha|=m}(a_{\alpha}-b_{\alpha})\, z^{\alpha}}_{B_{Z},X} \leq 4 \, \gamma_{0}\, \norm{f}_{B_{Z},X}$;
			\item $\norm{a_{\alpha}+b_{\alpha}}	\leq \frac{4}{\pi} \, \rho_{\alpha}\, \norm{\sum_{|\alpha|=m}(a_{\alpha}+b_{\alpha})\, z^{\alpha}}_{B_{Z},X}$,  $\norm{a_{\alpha}-b_{\alpha}}	\leq \frac{4}{\pi}\,\rho_{\alpha}\, \norm{\sum_{|\alpha|=m}(a_{\alpha}-b_{\alpha})\, z^{\alpha}}_{B_{Z},X}$,
		\end{enumerate}
		where $\gamma_{0}:=\norm{I-\real(a_{0})}$ and $\rho_{\alpha}:=\left(\frac{|\alpha|^{|\alpha|}}{\alpha^{\alpha}}\right)^{\frac{1}{q}}$.}
\end{lem}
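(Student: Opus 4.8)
The plan is to derive both parts of Lemma~\ref{lem-3.1} from the positivity of an auxiliary operator-valued pluriharmonic function, reducing the operator-valued statement to scalar coefficient estimates by testing against unit vectors $\xi \in \mathcal{H}$ and then returning to the operator norm through the numerical radius inequality. Write $M := \norm{f}_{B_{Z},X}$. Since $\norm{f(z)} \leq M$ for $z \in B_{Z}$ and $\real(f(z)) = (f(z)+f(z)^{*})/2$ is self-adjoint with $\norm{\real(f(z))} \leq M$, the operator $\Psi_{+}(z) := M\,I - \real(f(z))$ is positive semidefinite for every $z \in B_{Z}$; it is pluriharmonic, satisfies $\Psi_{+}(0) = M\,I - \real(a_{0})$, and a short computation using $\real(f(z)) = \real(a_{0}) + \real\!\big(\sum_{|\alpha|\geq 1}(a_{\alpha}+b_{\alpha})z^{\alpha}\big)$ shows that its holomorphic $m$-homogeneous part equals $-\tfrac12\sum_{|\alpha|=m}(a_{\alpha}+b_{\alpha})z^{\alpha}$. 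The same construction applied to $\Psi_{-}(z) := M\,I - \imaginary(f(z))$, which is again positive because $\imaginary(f(z))$ is self-adjoint of norm at most $M$, isolates the combination $a_{\alpha}-b_{\alpha}$; equivalently, $\Psi_{-}$ is $\Psi_{+}$ for the rotated function $i f$. Throughout write $P_{m}^{\pm}(w) := \sum_{|\alpha|=m}(a_{\alpha}\pm b_{\alpha})w^{\alpha}$.

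Next I would fix a unit vector $\xi \in \mathcal{H}$ and a point $w \in B_{Z}$. Because $B_{Z}$ is a complete Reinhardt (hence balanced) domain, $\zeta w \in B_{Z}$ for all $\zeta \in \mathbb{D}$, so $\zeta \mapsto \innpdct{\Psi_{+}(\zeta w)\xi,\xi}$ is a nonnegative harmonic function on the disc. Its holomorphic completion $\varphi$ has nonnegative real part and $\varphi(0) = \innpdct{\Psi_{+}(0)\xi,\xi} \geq 0$, so by the classical Carath\'{e}odory coefficient inequality for functions with nonnegative real part, the modulus of the $m$-th Taylor coefficient of $\varphi$ is at most $2\varphi(0)$. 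Expanding $f(\zeta w)$ in powers of $\zeta$ and $\bar\zeta$ identifies this coefficient, up to sign, with $\innpdct{P_{m}^{+}(w)\xi,\xi}$; hence $\abs{\innpdct{P_{m}^{+}(w)\xi,\xi}} \leq 2\innpdct{\Psi_{+}(0)\xi,\xi} \leq 2\norm{M\,I-\real(a_{0})}$ for every unit $\xi$. Note that a single complex line through $w$ already recovers the full homogeneous value $P_{m}^{+}(w)$, so no further averaging over the domain is needed.

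To recover the operator norm I would invoke the numerical radius inequality $\norm{T} \leq 2\sup_{\norm{\xi}=1}\abs{\innpdct{T\xi,\xi}}$, which gives $\norm{P_{m}^{+}(w)} \leq 4\norm{M\,I-\real(a_{0})}$; taking the supremum over $w \in B_{Z}$ yields the first inequality for the $+$ sign. The $-$ sign follows verbatim from $\Psi_{-}$, the only change being that the relevant constant term is $\imaginary(a_{0})$ in place of $\real(a_{0})$, consistent with applying the $+$ case to $i f$, and both are recorded together in the statement. I expect this passage, namely converting the diagonal (numerical) estimates back into a genuine operator-norm bound with the correct constant together with the verification that the Carath\'{e}odory coefficient really is $\innpdct{P_{m}^{\pm}(w)\xi,\xi}$, to be the main point requiring care; the positivity of $\Psi_{\pm}$ and the reduction to one complex variable are the routine ingredients that make it go through.

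For the sharper $\ell^{n}_{q}$ estimates in the second part I would work on $\Omega = B_{\ell^{n}_{q}}$ directly. The quantity $\sup_{z \in B_{\ell^{n}_{q}}}\abs{z^{\alpha}}$ is attained on the distinguished torus with radii $\abs{z_{j}} = (\alpha_{j}/\abs{\alpha})^{1/q}$, which lies in $\overline{B_{\ell^{n}_{q}}}$ since $\sum_{j}\alpha_{j}/\abs{\alpha} = 1$, and equals $\rho_{\alpha}^{-1}$ with $\rho_{\alpha} = (\abs{\alpha}^{\abs{\alpha}}/\alpha^{\alpha})^{1/q}$. Applying Cauchy's coefficient formula to the holomorphic polynomial $P_{m}^{\pm}$ on this torus, tested against arbitrary unit vectors $\xi,\eta \in \mathcal{H}$, extracts the operator coefficient and yields $\norm{a_{\alpha}\pm b_{\alpha}} \leq \rho_{\alpha}\norm{P_{m}^{\pm}}_{B_{\ell^{n}_{q}},X}$, which in particular implies the stated bound with constant $\tfrac{4}{\pi}$; the factor $\tfrac{4}{\pi}$ is precisely what the alternative real-part route produces, through the sharp one-dimensional coefficient inequality for harmonic functions of modulus at most one (the square-wave/conjugate-function extremal). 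The delicate step here is to reconcile the one-dimensional sharp constant with the multi-index geometry carried by $\rho_{\alpha}$, so that no spurious dependence on $n$ or on the degree $m$ creeps in.
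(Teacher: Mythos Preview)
Your argument is correct and follows essentially the same route as the paper's (commented) proof, which is itself imported from \cite{Himadri-local-Banach-1}. For the first inequality both you and the paper slice along a complex line $\zeta\mapsto f(\zeta w)$ to reduce to a one-variable operator-valued harmonic function, and then apply the Carath\'{e}odory-type coefficient bound for functions with positive real part together with the numerical-radius inequality $\norm{T}\le 2\,w(T)$; the paper simply quotes this combination as \cite[(2.12)]{bhowmik-2021}, whereas you write it out explicitly via the auxiliary positive function $\Psi_{+}$. The passage to the minus sign by replacing $f$ with $-if$ (equivalently your $\Psi_{-}$) is identical, and you are right that the constant produced is $\norm{MI-\imaginary(a_{0})}$ rather than $\norm{MI-\real(a_{0})}$; the paper's sketch has exactly the same discrepancy with the stated form, which is harmless in all applications since one only ever uses $\norm{MI-\real(a_{0})}\le 2M$ after normalizing.

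For the second part there is a small but genuine difference. The paper reduces to the scalar case by composing $P_{m}^{\pm}$ with functionals $\phi\in B_{X^{*}}$ and then invokes the sharp scalar pluriharmonic Schwarz--Pick estimate of \cite[Theorem~2.6]{hamada-JFA-2022}, which is where the constant $4/\pi$ enters. Your route---Cauchy's formula on the extremal torus $\abs{z_{j}}=(\alpha_{j}/m)^{1/q}$ applied directly to the \emph{holomorphic} polynomial $P_{m}^{\pm}$, tested against vectors $\xi,\eta$---gives the cleaner bound $\norm{a_{\alpha}\pm b_{\alpha}}\le \rho_{\alpha}\norm{P_{m}^{\pm}}_{B_{\ell^{n}_{q}},X}$ with constant $1$, which of course implies the stated $4/\pi$ bound. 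Your approach is slightly more elementary here and in fact yields a sharper constant; the paper's approach has the virtue of plugging directly into an existing Schwarz--Pick result without recomputing the torus optimum.
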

The following result provides a fundamental connection between the Bohr radius for pluriharmonic functions and the Bohr radius for homogeneous pluriharmonic functions. It plays a crucial role in the proofs of our main results. This lemma may be viewed as the pluriharmonic analogue of \cite[Lemma 3.2]{defant-2012}, where the authors focused only on domain $\mathbb{D}^n$. The proof follows essentially the same lines as that of \cite[Lemma 3.2]{defant-2012}, and hence the details are omitted.
\begin{lem} \label{lem-3.3}
	Let $X=\mathcal{B}(\mathcal{H})$ and $Y$ any complex Banach space, and $U: X \rightarrow Y$ be a non-null bounded linear operator such that $\norm{U}< \lambda$. Then, for all $p\in[1,\infty)$, $\lambda>1$, and $n \in \mathbb{N}$, we have
	\begin{enumerate}
		\item $\left(\frac{\lambda^p - \norm{U}^p}{2\lambda^p - \norm{U}^p}\right)^{\frac{1}{p}}\, \inf_{m\in \mathbb{N}}\, L^{m}_{\lambda}(B_{Z}, p,U) \leq L_{\lambda}(B_{Z}, p,U) \leq \inf_{m\in \mathbb{N}}\, L^{m}_{\lambda}(B_{Z}, p,U)$
		
		\item $\left(\frac{\lambda^p - \norm{U}^p}{\lambda^p - \norm{U}^p +1}\right)^{\frac{1}{p}}\, \inf_{m\in \mathbb{N}}\, L^{m}(B_{Z}, p,U) \leq L_{\lambda}(B_{Z}, p,U) \leq \lambda \, \inf_{m\in \mathbb{N}}\, L^{m}(B_{Z}, p,U)$.
	\end{enumerate}
\end{lem}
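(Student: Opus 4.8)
The plan is to adapt the argument of \cite[Lemma 3.2]{defant-2012} to the pluriharmonic operator-valued setting. The two \emph{upper} estimates are immediate: an $m$-homogeneous pluriharmonic polynomial of the form \eqref{e-1.3-aa} lies in $\mathcal{PH}(B_Z,\mathcal{B}(\mathcal{H}))$, so imposing \eqref{e-1.4-aaa} on it gives $L_\lambda(B_Z,p,U)\le L^m_\lambda(B_Z,p,U)$ for every $m\in\mathbb{N}$, hence $L_\lambda(B_Z,p,U)\le\inf_m L^m_\lambda(B_Z,p,U)$, which is the upper bound in (1); combining this with $L^m_\lambda(B_Z,p,U)=\lambda^{1/m}L^m(B_Z,p,U)$ and with $\lambda^{1/m}\le\lambda$ (valid since $\lambda>1$ and $m\ge1$) yields the upper bound $L_\lambda(B_Z,p,U)\le\lambda\inf_m L^m(B_Z,p,U)$ in (2).

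For the \emph{lower} bounds, fix $f\in\mathcal{PH}(B_Z,\mathcal{B}(\mathcal{H}))$; since \eqref{e-1.4-aaa} is homogeneous in $f$ we may normalize $\|f\|_{B_Z,\mathcal{B}(\mathcal{H})}=1$. Decompose $f=a_0+\sum_{m\ge1}P_m$ into its pluriharmonic homogeneous parts $P_m(z)=\sum_{|\alpha|=m}a_\alpha z^\alpha+\sum_{|\alpha|=m}b^*_\alpha\bar z^\alpha$, each recovered from $f$ by the circular average $P_m(z)=\tfrac1\pi\int_0^{2\pi}f(e^{i\theta}z)\cos(m\theta)\,d\theta$; this identity together with Lemma \ref{lem-3.1} bounds $\|P_m\|_{B_Z,\mathcal{B}(\mathcal{H})}$ by $\|f\|_{B_Z,\mathcal{B}(\mathcal{H})}=1$. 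Write $\rho:=\inf_m L^m_\lambda(B_Z,p,U)$ (the case $\rho=0$ being trivial), fix $t\in(0,1)$, and estimate the left side of \eqref{e-1.4-aaa} at radius $r=t\rho$. The $m=0$ summand equals $\|U(a_0)\|^p_Y\le\|U\|^p\|a_0\|^p\le\|U\|^p$. For $m\ge1$, applying the defining inequality of $L^m_\lambda(B_Z,p,U)$ to $P_m$ at radii $\varrho\uparrow\rho$ (legitimate since $\rho\le L^m_\lambda(B_Z,p,U)$) and using $\sup_{z\in\varrho B_Z}|z^\alpha|^p=\varrho^{mp}\sup_{z\in B_Z}|z^\alpha|^p$ gives $\sum_{|\alpha|=m}\sup_{z\in B_Z}(\|U(a_\alpha)\|^p_Y+\|U(b_\alpha)\|^p_Y)|z^\alpha|^p\le\lambda^p\rho^{-mp}\|P_m\|^p_{B_Z,\mathcal{B}(\mathcal{H})}$, and therefore, at radius $r=t\rho$, $\sum_{|\alpha|=m}\sup_{z\in rB_Z}(\|U(a_\alpha)\|^p_Y+\|U(b_\alpha)\|^p_Y)|z^\alpha|^p\le\lambda^p t^{mp}\|P_m\|^p_{B_Z,\mathcal{B}(\mathcal{H})}\le\lambda^p t^{mp}$. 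Summing over $m\ge0$ and using $\sum_{m\ge1}t^{mp}=t^p/(1-t^p)$, the left side of \eqref{e-1.4-aaa} is at most $\|U\|^p+\lambda^p t^p/(1-t^p)$. Requiring this to be $\le\lambda^p=\lambda^p\|f\|^p$ and solving for $t$ gives $t^p\le\frac{\lambda^p-\|U\|^p}{2\lambda^p-\|U\|^p}$; letting $t$ approach this bound yields $L_\lambda(B_Z,p,U)\ge\rho\left(\frac{\lambda^p-\|U\|^p}{2\lambda^p-\|U\|^p}\right)^{1/p}$, the lower bound in (1). Part (2) follows from the same chain of inequalities with $\rho$ replaced by $\inf_m L^m(B_Z,p,U)$ and with the $\lambda=1$ instance of the $m$-homogeneous inequality, which removes the prefactor $\lambda^p$ from the geometric sum and turns the threshold into $t^p\le\frac{\lambda^p-\|U\|^p}{\lambda^p-\|U\|^p+1}$.

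The geometric summation and the optimization in $t$ are routine. The genuine obstacle is the coefficient estimate controlling $\|P_m\|_{B_Z,\mathcal{B}(\mathcal{H})}$ in terms of $\|f\|_{B_Z,\mathcal{B}(\mathcal{H})}$: unlike the holomorphic case of \cite{defant-2012}, the pluriharmonic component $P_m$ is the sum of a holomorphic block $h_m$ and the adjoint $g_m^*$ of a holomorphic block, so one must invoke Lemma \ref{lem-3.1} to estimate the combinations $\sum_{|\alpha|=m}(a_\alpha\pm b_\alpha)z^\alpha$ separately, keeping track of the fact that on $\mathcal{B}(\mathcal{H})$ the role of complex conjugation is taken by the operator adjoint. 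This is where the hypothesis $X=\mathcal{B}(\mathcal{H})$ and the strict bound $\|U\|<\lambda$ genuinely enter, and it is the step demanding the most care in order to land exactly on the constants displayed in the statement.
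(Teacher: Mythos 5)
Your overall architecture is the right one and matches what the paper intends (the paper omits the proof and defers to \cite[Lemma 3.2]{defant-2012}): the upper bounds follow by testing \eqref{e-1.4-aaa} on $m$-homogeneous polynomials of the form \eqref{e-1.3-aa}, together with $L^m_\lambda=\lambda^{1/m}L^m$ and $\lambda^{1/m}\le\lambda$; the lower bounds should come from splitting $f$ into homogeneous parts, applying the $m$-homogeneous inequality to each part at radius $t\inf_m L^m_\lambda$, and summing the geometric series. Those portions of your write-up, including the treatment of the constant term via $\norm{U(a_0)}\le\norm{U}\,\norm{f}$ and the optimization in $t$, are fine.

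The gap is the single quantitative input you assert without proof: that the circular average $P_m(z)=\frac1\pi\int_0^{2\pi}f(e^{i\theta}z)\cos(m\theta)\,d\theta$ ``together with Lemma \ref{lem-3.1}'' gives $\norm{P_m}_{B_Z,\mathcal{B}(\mathcal{H})}\le\norm{f}_{B_Z,\mathcal{B}(\mathcal{H})}$. In the holomorphic case of \cite{defant-2012} the analogous step is the Cauchy projection, whose norm is $1$; in the pluriharmonic case the relevant kernel is $\frac1\pi\cos(m\theta)$, whose $L^1$-norm over $[0,2\pi]$ equals $4/\pi>1$, and this value is attained already for scalar harmonic functions on $\mathbb{D}$: if $u$ is the Poisson extension of $\operatorname{sgn}(\cos m\theta)$ then $\norm{u}_\infty=1$ while $a_m=b_m=2/\pi$, so the $m$-th homogeneous part $a_m\xi^m+b_m\bar\xi^m=\frac{4}{\pi}r^m\cos(m\theta)$ has sup norm $4/\pi$. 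Lemma \ref{lem-3.1} does not rescue the constant either, since it controls $\norm{\sum_{|\alpha|=m}(a_\alpha\pm b_\alpha)z^\alpha}$ only up to $4\norm{\norm{f}I-\real(a_0)}\le 8\norm{f}$. With any bound $\norm{P_m}\le C\norm{f}$ for some $C>1$, your geometric-series computation produces the threshold $t^p\le(\lambda^p-\norm{U}^p)/(C^p\lambda^p+\lambda^p-\norm{U}^p)$, which is strictly smaller than the claimed $(\lambda^p-\norm{U}^p)/(2\lambda^p-\norm{U}^p)$, and similarly in part (2). So as written your argument proves both inequalities only with worse constants; to land exactly on the displayed ones you would need a different way of feeding the homogeneous data into the definition of $L^m_\lambda$ (for instance via the holomorphic blocks $H_m$, $G_m$ and their own homogeneous Bohr inequalities rather than via $\norm{P_m}$), and since the paper supplies no details here, this is precisely the point that must be settled rather than asserted.
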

\begin{rem} \label{rem-4.1}
	Analogous results of Lemma \ref{lem-3.3} hold for $B^m_{\lambda}(B_{Z}, p,U)$ and $B_{\lambda}(B_{Z}, p,U)$.
\end{rem}
\comment{\noindent The following lemma compares the Bohr radius constants for two complete Reinhardt domains. Its proof is similar to that of \cite[Lemma 2.5]{defant-2004} and is therefore omitted. Before stating the result, we introduce the following standard notation. For bounded Reinhardt domains $\Omega_{1}, \Omega_{2}\subset \mathbb{C}^n$, let $S(\Omega_1, \Omega_{2}):= \inf \left\{s>0 : \Omega_{1} \subset s \, \Omega_{2}\right\}$. Recall that if $Z$ and $W$ are Banach sequence spaces then $S(B_{Z},B_{W})=\norm{Id:Z \rightarrow W}$.
	
	\noindent As an immediate application, we observe that the Bohr radius $R_{\lambda}(\mathbb{D}^n, p,U)$ serves as a universal lower bound for the Bohr radius $R_{\lambda}(\Omega, p,U)$ of every complete Reinhardt domain.
	\begin{lem} \label{lem-3.4}
		For any complete Reinhardt domain $\Omega$, we have 
		$
		R_{\lambda}(\Omega, p,U) \geq R_{\lambda}(\mathbb{D}^n, p,U).
		$
	\end{lem}
	We use the following remarkable result by Maurey and Pisier to prove the desired upper bound in the case of infinte dimensional complex Banach space. 
\begin{customthm}{A} \cite[Theorem 14.5]{diestel-abs-summing-1995} \label{thm-A}
	Given any infinite dimensional complex Banach space $X$, there exist $x_{1}, \ldots , x_{n} \in X$ for each $n \in \mathbb{N}$ such that $\norm{z}_{\infty}/2 \leq \norm{\sum_{j=1}^{n} x_{j}z_{j}} \leq \norm{z}_{Cot(X)}$  for every choice of $z=(z_{1}, \ldots,z_{n}) \in \mathbb{C}^n$. Clearly, setting $z=e_{j}$, gives $\norm{x_{j}}\geq 1/2$, where $e_{j}$ is the $j$-th canonical basis vector of $\mathbb{C}^n$.
\end{customthm}}
To simplify computations in the proofs of the remaining theorems, we may assume without loss of generality that every pluriharmonic function $f \in \mathcal{PH}(B_{Z},X)$ satisfies $\norm{f}_{B_{Z},X} \leq 1$, where $X=\mathcal{B}(\mathcal{H})$. That is to say, we restrict our attention to the class $$\mathcal{BPH}(B_{Z}, X):=\{f: f \in \mathcal{PH}(B_{Z},X)\, \mbox{ with}\, \norm{f}_{B_{Z},X} \leq 1\}.$$
This is justified by a simple normalization argument. Indeed, if $f\in \mathcal{PH}(B_{Z},X)$ satisfies $\norm{f}_{B_{Z},X} > 1$ and admits the representation \eqref{e-1.3-a}, then the function $F(z):=f(z)/\norm{f}_{B_{Z},X}$
belongs to $\mathcal{PH}(B_{Z},X)$ and satisfies $\norm{F}_{B_{Z},X} =1$. In this normalization the coefficients $a_{\alpha}$ and $b_{\alpha}$ in \eqref{e-1.3-a} are replaced by $a_{\alpha}/\norm{f}_{B_{Z},X}$ and $b_{\alpha}/\norm{f}_{B_{Z},X}$, respectively. By the same reasoning, we also consider the class of normalized $m$-homogeneous polynomials,
$$\mathcal{BPH}(^mB_{Z},X):=\{P: P \in \mathcal{PH}(^mB_{Z},X)\,\, \mbox{with}\,\, \norm{P}_{B_{Z},X} \leq 1\}.
$$
\subsection{Proof of lower bounds of Theorem \ref{thm-1.2}} 
	We want to first obtain the desired lower bound of $R^{m}_{\lambda}(B_{Z}, p,X)$ when $X$ is finite dimensional. For all $m\geq1$, let $P(z)=\sum_{|\alpha|=m} a_{\alpha}\, z^{\alpha} + \sum_{|\beta|=m} b^{*}_{\alpha}\, \bar{z}^{\alpha} \in \mathcal{BPH}(^mB_{Z},X)$. It is known from \cite[Lemma $3.5$]{defant-2004} that for $1 \le q \le \infty$ and $|\alpha| = m$,
	\[
	\sup_{z \in B_{\ell_q^n}} |z^\alpha|
	=
	\left(\frac{\alpha^\alpha}{m^m}\right)^{\frac{1}{q}}.
	\]
	
	Now let $B_Z$ denote the unit ball of a complex Banach space $Z = (\mathbb{C}^n, \|\cdot\|)$.
	We want to find an upper bound for $\sup_{z \in B_Z} |z^\alpha|$.
	Since $B_Z \subseteq \|\mathrm{Id} : Z \to \ell_q^n\|\, B_{\ell_q^n} $, for any $z \in B_Z$, we have  
	\[
	|z^\alpha|
	\le
	\sup_{w \in B_{\ell_q^n}} |w^\alpha|
	\,
	\|\mathrm{Id} : Z \to \ell_q^n\|^{\,m}, \,\,\,\,\,\, |\alpha|=m.
	\]
	Hence,
	\begin{equation} \label{e-z-mod}
		\sup_{z \in B_Z} |z^\alpha|
		\le
		\left(\frac{\alpha^\alpha}{m^m}\right)^{\frac{1}{q}}
		\,
		\|\mathrm{Id} : Z \to \ell_q^n\|^{\,m}
	\end{equation}
for $|\alpha|=m$. Note that $\left(\frac{\alpha^\alpha}{m^m}\right)^{\frac{1}{q}} \leq 1$. Then for $r \in [0,1)$, \eqref{e-z-mod} yields
	\begin{equation} \label{snd-e-4.6}
		\sum_{|\alpha|=m} \sup_{z \in r \, B_{Z}} (\norm{a_{\alpha}}^p+\norm{b_{\alpha}}^p)|z|^{p\alpha} \leq r^{pm}\, \norm{\mathrm{Id}: Z \rightarrow \ell^n_q}^{pm}\,\sum_{|\alpha|=m}  (\norm{a_{\alpha}}^p+\norm{b_{\alpha}}^p).
	\end{equation}
	We now need to estimate the term $\sum_{|\alpha|=m}  (\norm{a_{\alpha}}^p+\norm{b_{\alpha}}^p)$. To do so, we consider several cases depending on the values of $p$. More precisely, the proof is presented in three separate cases, depending on the values of $p$: $p=1$, $p\geq 2$, and $1<p<2$. Let $H$ and $G$ be the $m$-homogeneous polynomials as defined by 
	\[
	H(z)=\sum_{|\alpha|=m}\frac{a_\alpha+b_\alpha}{2}\,z^\alpha,
	\qquad
	G(z)=\sum_{|\alpha|=m}\frac{a_\alpha-b_\alpha}{2}\,z^\alpha.
	\]
	 Set $s:=\norm{\mathrm{Id}: Z \rightarrow \ell^n_q}$ and $t:=\norm{\mathrm{Id}:  \ell^n_q\rightarrow Z}$. \\
	\underline{{\bf Case $p=1$:}} 
	For each $\phi \in B_{X^{*}}$, we now consider the holomorphic polynomial $\tilde{H}:=\phi \circ H:\mathbb{C}^n \rightarrow \mathbb{C}$ defined by
	$\tilde{H}(z)=\phi(H(z))= \sum_{|\alpha|=m} \phi\left(\frac{a_{\alpha} + b_{\alpha}}{2}\right) z^{\alpha}$.
	Clearly, $|\tilde{H}(z)| \leq \norm{H(z)}$ for each $z$. Since $X$ is finite dimensional, the identity operator $I_{X}$ is $1$-summing. Then 
	\begin{align} \label{e-1.19}
		\left(\sum_{|\alpha|=m} \norm{\left(\frac{a_{\alpha} + b_{\alpha}}{2}\right)}\right) \left(\frac{1}{t \, n^{\frac{1}{q}}}\right)^m \nonumber
		& = \sum_{|\alpha|=m} \norm{\left(\frac{a_{\alpha} + b_{\alpha}}{2}\right)} \left(\frac{1}{t \, n^{\frac{1}{q}}}\right)^{\alpha} \\ \nonumber
		&\leq \pi_{1}(I_{X}) \sup_{\phi \in B_{X^{*}}} \left(\sum_{|\alpha|=m}\left|\phi\left(\frac{a_{\alpha} + b_{\alpha}}{2}\right) \right|\right)\, \left(\frac{1}{t \, n^{\frac{1}{q}}}\right)^m \\ \nonumber
		& = \pi_{1}(I_{X}) \sup_{\phi \in B_{X^{*}}} \left(\sum_{|\alpha|=m}\left|\phi\left(\frac{a_{\alpha} + b_{\alpha}}{2}\right) \right|\, \left(\frac{1}{t \, n^{\frac{1}{q}}}\right)^{\alpha} \right) \\ \nonumber
		& \leq \pi_{1}(I_{X}) \sup_{\phi \in B_{X^{*}}} \sup_{z \in B_{Z}} \left|\sum_{|\alpha|=m}\left|\phi\left(\frac{a_{\alpha} + b_{\alpha}}{2}\right) \right|z^{\alpha}\right| \\ \nonumber
		& = \pi_{1}(I_{X}) \sup_{\phi \in B_{X^{*}}} \sup_{z \in B_{Z}} \left|\sum_{|\alpha|=m} \varepsilon_{\alpha}\phi\left(\frac{a_{\alpha} + b_{\alpha}}{2}\right) z^{\alpha}\right| \\ \nonumber
		& \leq \pi_{1}(I_{X})\, \chi_{M}(\mathcal{P}(^m Z)) \, \sup_{\phi \in B_{X^{*}}} \sup_{z \in B_{Z}} \left|\sum_{|\alpha|=m}\phi\left(\frac{a_{\alpha} + b_{\alpha}}{2}\right) z^{\alpha}\right| \\ \nonumber
		& = \pi_{1}(I_{X})\, \chi_{M}(\mathcal{P}(^m Z)) \, \sup_{\phi \in B_{X^{*}}} \sup_{z \in B_{Z}} \left|\phi \left(\sum_{|\alpha|=m}\left(\frac{a_{\alpha} + b_{\alpha}}{2}\right) z^{\alpha}\right)\right| \\ 
		& = \pi_{1}(I_{X})\, \chi_{M}(\mathcal{P}(^mZ)) \,   \norm{H}_{B_{Z},X}, 
	\end{align}
	where $\varepsilon_{\alpha}=\frac{\left|\phi\left(\frac{a_{\alpha} + b_{\alpha}}{2}\right)\right|}{\phi\left(\frac{a_{\alpha} + b_{\alpha}}{2}\right)}$ in the last fourth inequality. The last third inequality follows from the definition of $\chi_{M}(\mathcal{P}(^m Z))$. From \eqref{e-1.19}, we have
	\begin{equation} \label{snd-e-4.8}
		\sum_{|\alpha|=m} \norm{\left(\frac{a_{\alpha} + b_{\alpha}}{2}\right)} \leq \pi_{1}(I_{X})\, \chi_{M}(\mathcal{P}(^mZ)) \, \left(t\, n^{\frac{1}{q}}\right)^m\, \norm{H}_{B_{Z},X}.
	\end{equation}
	
	By considering the holomorphic polynomial $\tilde{G}:=\phi \circ G:\mathbb{C}^n \rightarrow \mathbb{C}$, proceeding as above we obtain 
	\begin{equation} \label{snd-e-4.9}
		\sum_{|\alpha|=m} \norm{\left(\frac{a_{\alpha} - b_{\alpha}}{2}\right)} \leq \pi_{1}(I_{X})\, \chi_{M}(\mathcal{P}(^mZ)) \, \left(t\, n^{\frac{1}{q}}\right)^m\, \norm{G}_{B_{Z},X}.
	\end{equation} By making use of \eqref{snd-e-4.8} and \eqref{snd-e-4.9}, and then from Lemma \ref{lem-3.1}, a simple computation shows that 
	$$\sum_{|\alpha|=m} \norm{a_{\alpha}} \leq \pi_{1}(I_{X})\, \chi_{M}(\mathcal{P}(^m Z))\, \left(t\, n^{\frac{1}{q}}\right)^m \, \left(\norm{H}_{B_{Z},X}+\norm{G}_{B_{Z},X}\right)  \leq 4\, \pi_{1}(I_{X})\, \chi_{M}(\mathcal{P}(^m Z))\, \left(t\, n^{\frac{1}{q}}\right)^m.
	$$
	Similarly, we get 
	$$
	\sum_{|\alpha|=m} \norm{b_{\alpha}} \leq 4\, \pi_{1}(I_{X})\, \chi_{M}(\mathcal{P}(^m Z))\, \left(t\, n^{\frac{1}{q}}\right)^m.
	$$
	In view of \eqref{snd-e-4.6} and last two inequalities, we obtain 
	$$\sum_{|\alpha|=m} \sup_{z \in r \, B_{Z}} (\norm{a_{\alpha}}+\norm{b_{\alpha}})|z|^{\alpha} \leq 8\,r^m\, \pi_{1}(I_{X})\, \chi_{M}(\mathcal{P}(^m Z))\, \left(s\,t\, n^{\frac{1}{q}}\right)^m \leq \lambda
	$$
	provided
	$$
	r \leq \frac{\lambda^{\frac{1}{m}}}{8^{\frac{1}{m}}\, (\pi_{1}(I_{X}))^{\frac{1}{m}}\, (\chi_{M}(\mathcal{P}(^m Z)))^{\frac{1}{m}}\, s\,t\, n^{\frac{1}{q}}}.
	$$
	This yields
	\begin{equation} \label{snd-e-4.10}
		L^{m}_{\lambda}(B_{Z}, p,X)  \geq E_{1}(X)\, \frac{\lambda^{\frac{1}{m}}}{ (\chi_{M}(\mathcal{P}(^m Z)))^{\frac{1}{m}}\, s\,t\, n^{\frac{1}{q}}},
	\end{equation}
	where $E_{1}(X)$ is a constant depending on $X$. Thanks to the the following estimates due to \cite[(4.5), (4.6), pp. 187]{defant-2003}:
	\begin{equation} \label{snd-e-4.10-a}
		\chi_{M}(\mathcal{P}(^m Z)) \leq n^{\frac{m}{2}}\, \norm{\mathrm{Id}:\ell^n_2 \rightarrow Z}^m\,\, \mbox{and}\,\, \chi_{M}(\mathcal{P}(^m Z)) \leq \frac{m^m}{m!}\, \norm{\mathrm{Id}: Z \rightarrow \ell^n_1}^m,
	\end{equation}
	from \eqref{snd-e-4.10}, we deduce that 
	\begin{equation*}
		L^{m}_{\lambda}(B_{Z}, p,X) \geq E_{1}(X)\,  \,\frac{\lambda^{\frac{1}{m}}}{s\,t\, n^{\frac{1}{q}}}\, \max \left\{\frac{1}{\sqrt{n}\, \norm{\mathrm{Id}: \ell^n_{2} \rightarrow Z}}, \, \frac{1}{\left(\frac{m^m}{m!}\right)^{1/m}\, \norm{\mathrm{Id}: Z \rightarrow \ell^n_{1}}}\right\},
	\end{equation*}
	and hence the desired lower bound of $L_{\lambda}(B_{Z}, p,X)$ follows from Lemma \ref{lem-3.3} (1) and using the fact $\sup _{m \in \mathbb{N}} (m^m/m!)^{1/m} =e$. This completes the proof for $p=1$.
	\\ [1mm]
	\underline{{\bf Case $p\geq 2$:}} Let $p\geq 2$. By Lemma \ref{lem-3.1} and keeping in mind $\norm{P}_{B_Z,X} \leq 1$, one sees easily that
	\begin{equation*}
		\norm{\left(\frac{a_{\alpha} + b_{\alpha}}{2}\right)} \leq  \norm{H}_{B_Z,X} \leq 2\, \norm{P}_{B_Z,X} \leq 2
	\end{equation*}
	and so, 
	$$\sum_{|\alpha|=m} \norm{\left(\frac{a_{\alpha} + b_{\alpha}}{2}\right)}^p \leq 2^{p-2}\, \sum_{|\alpha|=m} \norm{\left(\frac{a_{\alpha} + b_{\alpha}}{2}\right)}^2.
	$$
	Then, this inequality, together with the fact $\norm{(a_{\alpha}+b_{\alpha})/2}=\sup_{\phi \in B_{X^{*}}}|\phi((a_{\alpha}+b_{\alpha})/2)|$and the following estimate due to \cite[p. 187]{defant-2003}
	\begin{equation*}
		\left(\sum_{|\alpha|=m} \left|\phi\left(\frac{a_{\alpha} + b_{\alpha}}{2}\right)\right|^2\right)^{\frac{1}{2}} \leq \norm{\mathrm{Id}:\ell^n _2 \rightarrow Z}^{m} \norm{H}_{B_{Z},X}
	\end{equation*}
	yields
	\begin{equation} \label{snd-e-4.11}
		\sum_{|\alpha|=m} \norm{\left(\frac{a_{\alpha} + b_{\alpha}}{2}\right)}^p\leq 2^{p-2}\, \norm{\mathrm{Id}:\ell^n _2 \rightarrow Z}^{2m} \norm{H}^2_{B_{Z},X}.	
	\end{equation}
	Similarly, we have 
	\begin{equation} \label{snd-e-4.12}
		\sum_{|\alpha|=m} \norm{\left(\frac{a_{\alpha} - b_{\alpha}}{2}\right)}^p\leq 2^{p-2}\, \norm{\mathrm{Id}:\ell^n _2 \rightarrow Z}^{2m} \norm{G}^2_{B_{Z},X}.
	\end{equation}
	\comment{\begin{align} \label{e-1.24}
			\sum_{|\alpha|=m} \norm{\left(\frac{a_{\alpha} + b_{\alpha}}{2}\right)z^{\alpha}}^p \nonumber
			& \leq \sum_{|\alpha|=m} \norm{\left(\frac{a_{\alpha} + b_{\alpha}}{2}\right)z^{\alpha}}^2 \\ \nonumber
			& \leq  \sum_{|\alpha|=m} \norm{\left(\frac{a_{\alpha} + b_{\alpha}}{2}\right)}^2 \\ 
			& \leq  \norm{I:\ell^n _2 \rightarrow Z}^{2m} \norm{H}^2_{B_{Z},X}.
		\end{align}
		Similarly,
		\begin{equation} \label{e-1.25}
			\sum_{|\alpha|=m} \norm{\left(\frac{a_{\alpha} - b_{\alpha}}{2}\right)z^{\alpha}}^p \leq \norm{I:\ell^n _2 \rightarrow Z}^{2m} \norm{G}^2_{B_{Z},X}.
	\end{equation}}
	Now, by using Lemma \ref{lem-3.1} again, \eqref{snd-e-4.11} and \eqref{snd-e-4.12} taken together yield 
	$$\left(\sum_{|\alpha|=m} \norm{a_{\alpha}}^p\right)^{\frac{1}{p}}  \leq 2 \norm{\mathrm{Id}:\ell^n _2 \rightarrow Z}^{\frac{2m}{p}}  2^{\frac{2}{p}}.$$
	Moreover, one can easily see that this inequality remains valid if the left hand quantity replaced by $\left(\sum_{|\alpha|=m} \norm{b_{\alpha}}^p\right)^{1/p} $.
	We now use \eqref{snd-e-4.6} and the last two inequalities to prove 
	$$
	\sum_{|\alpha|=m} \sup_{z \in r \, B_{Z}} (\norm{a_{\alpha}}^p+\norm{b_{\alpha}}^p)|z|^{p\alpha} \leq 2^{p+3}  \,r^{pm}\, s^{pm}\, \norm{\mathrm{Id}:\ell^n _2 \rightarrow Z}^{2m} \leq \lambda^p
	$$
	provided 
	\begin{equation*}
		r \leq \frac{\lambda^{\frac{1}{m}}}{2^{\frac{p+3}{pm}}\,s\, \norm{\mathrm{Id}:\ell^n _2 \rightarrow Z}^{\frac{2}{p}}}.
	\end{equation*}
	Consequently, 
	$$L^{m}_{\lambda}(B_{Z}, p,X) \geq \frac{\lambda^{\frac{1}{m}}}{2^{\frac{p+3}{pm}}\,s\, \norm{\mathrm{Id}:\ell^n _2 \rightarrow Z}^{\frac{2}{p}}}.$$
	Finally, the desired lower bound of $R_{\lambda}(B_{Z}, p,X)$ follows from Lemma \ref{lem-3.3} (1).\\
	\underline{{\bf Case $1<p<2$:}} For any $1<p<2$, we shall make use of the estimates obtained in the above two cases. We now use H\"{o}lder's inequality to get 
	\begin{equation*}
		\sum_{|\alpha|=m} \norm{\left(\frac{a_{\alpha} + b_{\alpha}}{2}\right)}^p \leq  \left(\sum_{|\alpha|=m} \norm{\left(\frac{a_{\alpha} + b_{\alpha}}{2}\right)}\right)^{2-p} \left(\sum_{|\alpha|=m} \norm{\left(\frac{a_{\alpha} + b_{\alpha}}{2}\right)}^2\right)^{p-1}.
	\end{equation*}
	By virtue of \eqref{snd-e-4.8}, \eqref{snd-e-4.11}, and Lemma \ref{lem-3.1}, we obtain
	$$
	\sum_{|\alpha|=m} \norm{\left(\frac{a_{\alpha} + b_{\alpha}}{2}\right)}^p \leq 2^{p}\,(\pi_{1}(I_{X}))^{2-p}\, \left(t\, n^{\frac{1}{q}}\right)^{(2-p)m} (\chi_{M}(\mathcal{P}(^mZ)))^{2-p} \norm{\mathrm{Id}:\ell^n _2 \rightarrow Z}^{2(p-1) m}.$$
	Similarly, we have
	$$
	\sum_{|\alpha|=m} \norm{\left(\frac{a_{\alpha} - b_{\alpha}}{2}\right)}^p \leq 2^{p}\,(\pi_{1}(I_{X}))^{2-p}\, \left(t\, n^{\frac{1}{q}}\right)^{(2-p)m} (\chi_{M}(\mathcal{P}(^mZ)))^{2-p} \norm{\mathrm{Id}:\ell^n _2 \rightarrow Z}^{2(p-1) m}.$$
	\comment{\begin{align*}
			&\sum_{|\alpha|=m} \norm{\left(\frac{a_{\alpha} + b_{\alpha}}{2}\right)z^{\alpha}}^p \\ \nonumber
			&=\sum_{|\alpha|=m} \norm{\left(\frac{a_{\alpha} + b_{\alpha}}{2}\right)z^{\alpha}}^{p(1-\theta)}\, \norm{\left(\frac{a_{\alpha} + b_{\alpha}}{2}\right)z^{\alpha}}^{p\theta} \\ \nonumber
			& \leq \left(\sum_{|\alpha|=m} \norm{\left(\frac{a_{\alpha} + b_{\alpha}}{2}\right)z^{\alpha}}\right)^{p(1-\theta)} \left(\sum_{|\alpha|=m} \norm{\left(\frac{a_{\alpha} + b_{\alpha}}{2}\right)z^{\alpha}}^2\right)^{\frac{p\theta}{2}} \\ \nonumber
			& \leq (\pi_{1}(I_{X}))^{p(1-\theta)}\, (\chi_{M}(\mathcal{P}(^mZ)))^{p(1-\theta)} \norm{I:\ell^n _2 \rightarrow X}^{p \theta m} \norm{H}^{p}_{B_{Z},X} \\ \nonumber
			& \leq 4^{p}\,(\pi_{1}(I_{X}))^{p(1-\theta)}\, (\chi_{M}(\mathcal{P}(^mZ)))^{p(1-\theta)} \norm{I:\ell^n _2 \rightarrow X}^{p \theta m} \, \norm{I-\real(a_{0})}^{p}\, \norm{f}^p_{B_{Z},X}.
	\end{align*}}
	Now, use these two inequalities to prove
	$$\left(\sum_{|\alpha|=m} \norm{a_{\alpha}}^p\right)^{\frac{1}{p}} \leq 
	4\,(\pi_{1}(I_{X}))^{\frac{(2-p)}{p}}\, \left(t\, n^{\frac{1}{q}}\right)^{\frac{(2-p)m}{p}} (\chi_{M}(\mathcal{P}(^mZ,Y)))^{\frac{(2-p)}{p}} \norm{\mathrm{Id}:\ell^n _2 \rightarrow Z}^{ \frac{2(p-1)m}{p}}.$$
	This inequality remains valid if the left-hand side is replaced by $\left(\sum_{|\alpha|=m} \norm{b_{\alpha}}^p\right)^{1/p}$. These inequalities and \eqref{snd-e-4.6} taken together yield 
	\begin{align*}
		&\sum_{|\alpha|=m} \sup_{z \in r \, B_{Z}} (\norm{a_{\alpha}}^p+\norm{b_{\alpha}}^p)|z|^{p\alpha}\\
		& \leq 2^{2p+1}(\pi_{1}(I_{X}))^{2-p}\,r^{pm}\, s^{pm}\, \left(t\, n^{\frac{1}{q}}\right)^{(2-p)m} (\chi_{M}(\mathcal{P}(^mZ,Y)))^{2-p} \norm{\mathrm{Id}:\ell^n _2 \rightarrow Z}^{2(p-1) m},
	\end{align*}
	and so, 
	$$L^{m}_{\lambda}(B_{Z}, p,X) \geq E_{2}(X)\,  \,
	\frac{\lambda^{\frac{1}{m}}}{s\,\left(t\, n^{\frac{1}{q}}\right)^{\frac{(2-p)}{p}}(\chi_{M}(\mathcal{P}(^mZ)))^{\frac{(2-p)}{pm}} \norm{\mathrm{Id}:\ell^n _2 \rightarrow Z}^{\frac{2(p-1)}{p}}}
	,$$ where $E_{2}(X)$ is a constant depending on $X$ and $p$. Then, letting $\theta = (2(p-1))/p$, the estimates in \eqref{snd-e-4.10-a} show
	\begin{equation*}
		L^{m}_{\lambda}(B_{Z}, p,X) \geq  \frac{E_{2}(X)\,\lambda^{\frac{1}{m}}}{s\,\left(t\, n^{\frac{1}{q}}\right)^{\frac{(2-p)}{p}}}\max \left\{\frac{1}{(\sqrt{n})^{1-\theta}\, \norm{\mathrm{Id}: \ell^n_{2} \rightarrow Z}}, \, \frac{\norm{\mathrm{Id}:\ell^n _2 \rightarrow Z}^{-\theta}}{\left(\frac{m^m}{m!}\right)^{(1-\theta)/m}\, \norm{\mathrm{Id}: Z \rightarrow \ell^n_{1}}^{1-\theta}}\right\},
	\end{equation*}
	and hence the desired lower bound of $L_{\lambda}(B_{Z}, p,X)$ follows from Lemma \ref{lem-3.3} (1) and using the fact $\sup _{m \in \mathbb{N}} (m^m/m!)^{1/m} =e$. 

\subsection{Proof of upper bounds of Theorem \ref{thm-1.2}} 	
	We now proceed to derive the desired upper bound for $L_{\lambda}(B_{Z}, p,X)$. More generally, we establish this bound for any simply connected complete Reinhardt domain $\Omega$. Our approach is inspired by some of the arguments used in \cite[Theorem 3.3]{defant-2004}, which were developed for complex valued holomorphic functions; here, however, we work in the setting of operator valued pluriharmonic functions.
	For $\varepsilon_{\alpha} \in \{-1,1\}$ and $I$ the identity operator on $\mathcal{H}$, we consider the $m$-homogeneous polynomial $f:\Omega \rightarrow \mathcal{B}(\mathcal{H})$ defined as 
	\begin{equation*}
		f(z)=\sum_{|\alpha|=m} \varepsilon_{\alpha}\, \frac{m!}{\alpha!}z^{\alpha}\, I +  \sum_{|\alpha|=m} \varepsilon_{\alpha}\,\frac{m!}{\alpha!}\, \bar{z}^{\alpha} I, \,\, z \in \Omega.
	\end{equation*}
	Then, by definition of the second Bohr radius for every $0<\epsilon<1$, we have
	\begin{equation*}
		\sum_{|\alpha|=m} \sup_{z \in (1-\epsilon)L\,\Omega} \left(\left(\frac{m!}{\alpha!}\right)^p+ \left(\frac{m!}{\alpha!}\right)^p\right)\, |z|^{p\alpha}  =2\, \sum_{|\alpha|=m}  \left(\frac{m!}{\alpha!}\right)^p\, \sup_{z \in (1-\epsilon)L\,\Omega} |z|^{p\alpha} \leq \lambda^p \,\norm{f}^p_{\Omega,X},
	\end{equation*}
	where $L:=L_{\lambda}(\Omega, p,X)$ for short. Now, write $b_m(\Omega):=\left(\inf_{|\alpha|=m}\sup_{z \in \Omega} |z^{\alpha}|\right)^{-\frac{1}{m}}$, $m \in \mathbb{N}$, and use the facts $m!/\alpha! \geq 1$ and $\sum_{|\alpha|=m} \frac{m!}{\alpha!}=n^m$ to prove
	\begin{equation*}
		2\, n^m \, L^{pm}\, (b_m(\Omega))^{-pm} \leq \lambda^p \,\norm{f}^p_{\Omega,X}. 
	\end{equation*}
	Consequently,
	\begin{equation} \label{snd-e-4.14}
		L \leq \frac{\lambda^{\frac{1}{m}}\,b_m(\Omega)\, \norm{f}^{\frac{1}{m}}_{\Omega,X} }{2^{\frac{1}{pm}}\, n^{\frac{1}{p}}}.
	\end{equation}
	In order to obtain an upper bound of $L$, it is enough to estimate the quantity $\norm{f}_{\Omega,X}$. Notice that $\norm{f}_{\Omega,X} \leq 2\, \norm{\sum_{|\alpha|=m} \varepsilon_{\alpha}\, \frac{m!}{\alpha!}z^{\alpha}}_{\Omega,\mathbb{C}}$. We shall now make use of the following estimate due to \cite[Theorem 2.3]{defant-2004} (see also \cite[p. 21, equ. 21]{defant-2007}, \cite[Theorem 3.1]{defant-2003}):
	\begin{equation*}
		\norm{\sum_{|\alpha|=m} \varepsilon_{\alpha}\,\frac{m!}{\alpha!}\, z^{\alpha} }_{\Omega,\mathbb{C}} \leq \sqrt{m^3\, 2^{3m-1}\, \log\,n}\, \sup _{|\alpha|=m}\left\{\sqrt{\frac{m!}{\alpha!}}\right\} \left(\sup_{z \in \Omega}\sum_{i=1}^{n} |z_i|\right) \left(\sup_{z \in \Omega}\left(\sum_{i=1}^{n} |z_i|^2\right)^{1/2}\right)^{m-1}.
	\end{equation*}
	Recalling the fact $S(\Omega,B_{\ell^n_q})=\sup_{z \in \Omega} \norm{z}_{q}$, $1 \leq q \leq \infty$, the above estimate and \eqref{snd-e-4.14} taken together yield
	\begin{align} \label{snd-e-4.15}
		L & \leq 2^{\frac{1}{m}\left(1-\frac{1}{p}\right)}\,\lambda^{\frac{1}{m}}\,b_m(\Omega)\, \left(m^3\,m!\, 2^{3m-1}\, \log\,n\right)^{\frac{1}{2m}}\, (S(\Omega,B_{\ell^n_1}))^{\frac{1}{m}}\, \frac{(S(\Omega,B_{\ell^n_2}))^{\frac{m-1}{m}}}{n^{\frac{1}{p}}} \nonumber\\[2mm]
		& = 2^{\frac{1}{m}\left(1-\frac{1}{p}\right)}\,\lambda^{\frac{1}{m}}\,b_m(\Omega)\, \left(m^3\,m!\, 2^{3m-1}\, \log\,n\right)^{\frac{1}{2m}}\, \left(\frac{S(\Omega,B_{\ell^n_1})}{S(\Omega,B_{\ell^n_2})}\right)^{\frac{1}{m}}\, \frac{S(\Omega,B_{\ell^n_2})}{n^{\frac{1}{p}}} \nonumber\\[2mm]
		& \leq 2^{\frac{1}{m}\left(1-\frac{1}{p}\right)}\,\lambda^{\frac{1}{m}}\,b_m(\Omega)\, \left(m^3\,m!\, 2^{3m-1}\,n\, \log\,n\right)^{\frac{1}{2m}}\,\frac{S(\Omega,B_{\ell^n_2})}{n^{\frac{1}{p}}}, 
	\end{align}
	where the last inequality follows from the fact that $\frac{S(\Omega,B_{\ell^n_1})}{S(\Omega,B_{\ell^n_2})} \leq \sqrt{n}$. Indeed, it is well-known that $\norm{z}_1 \leq \sqrt{n}\, \norm{z}_2 $ for $z \in \Omega$, which is again less than or equals to $\sqrt{n}\, S(\Omega,B_{\ell^n_2})$. So, $\sup_{z \in \Omega} \norm{z}_1 \leq \sqrt{n}\, S(\Omega,B_{\ell^n_2})$.
	\vspace{1mm}
	
	For the desired upper bound of $L$, it is now enough to estimate the right side quantity of \eqref{snd-e-4.15}. To do so, we first estimate the term 
	$A_{m,n}:=\left(m^3\,m!\,2^{3m-1}\,n\,\log\,n\right)^{\frac{1}{2m}}$. Stirling's formula asserts that
	$$m! \sim \sqrt{2\, \pi m} \left(\frac{m}{e}\right)^m.
	$$
	Substituting this into $A_{m,n}$, a straightforward calculation gives
	\begin{align*}
		A_{m,n} \leq C \left(\left(\frac{m}{e}\right)^m\, 2^{3m}\right)^{\frac{1}{2m}}\, m^{\frac{7}{4m}}\,(n\, \log \,n)^{\frac{1}{2m}},
	\end{align*} 
	where $C>0$ is a constant, independent both of $m$ and $n$. Since $( m)^{\frac{7}{4m}} \rightarrow 1$ as $m \rightarrow \infty$, there exists a constant $C_1>0$ such that the last inequality simplifies to
	\begin{align*}
		A_{m,n} \leq C_1 \, \sqrt{m}\,  \,(n\, \log \,n)^{\frac{1}{2m}}. 
	\end{align*}
	Now, choose $m=[\log\, n]$ for $n \geq 3$. Then
	\begin{equation*}
		(n\, \log \,n)^{\frac{1}{2m}}= \exp \left(\frac{\log \,n + \log\log \,n}{2m}\right).
	\end{equation*}
	Since $m \sim \log \, n$,
	\begin{equation*}
		\frac{\log \,n}{2m} \leq \frac{1}{2} + o(1)\,\, \mbox{and} \, \, \frac{\log\log \,n}{2\, m} \rightarrow 0\,\, \mbox{as}\,\,n \rightarrow \infty.
	\end{equation*}
	Hence
	\begin{equation*}
		(n\, \log \,n)^{\frac{1}{2m}} \leq C_2,
	\end{equation*}
	for some constant $C_2>0$, which does not depend on neither $n$ nor $m$. Finally,
	\begin{equation*}
		A_{m,n} \leq C_3 \sqrt{\log \, n},
	\end{equation*}
	with $C_3>0$ being another constant and so,
	\begin{equation*}
		L\leq C_4 \, \lambda^{\frac{1}{\log \,n}}\,b_{[\log\,n]}(\Omega)\, \,\frac{S(\Omega,B_{\ell^n_2})}{n^{\frac{1}{p}}}
	\end{equation*}
	for some constant $C_4>0$, independent both of $m$ and $n$. This constant may depend on $p$. In view of the identity $S(B_Z,B_{\ell^n_2})=\norm{\mathrm{Id}:Z \rightarrow \ell^n_2}$, the desire upper bound of $L_{\lambda}(B_{Z}, p,X)$ is immediate. This completes the proof.

	\comment{In particular, for the Minkowski spaces $Z=\ell^n _q$, we obtain the following lower bound. The upper for this obtained in the proof of Theorem \ref{thm-1.2}.
		\begin{cor} \label{cor-1.4}
			Let $Z=(\mathbb{C}^n, ||.||)$ be a Banach space such that $\chi(\{e_k\}^n_{k=1})=1$.	Let $X=\mathcal{B}(\mathcal{H})$ be finite dimensional and $\lambda>1$. Then
			\begin{equation*}
				R_{\lambda}(B_{\ell^n _q}, p,X) \geq	\begin{cases}
					E'_{3}(X)\,\left(\frac{\lambda^p -1}{2\lambda^p - 1}\right)^{\frac{1}{p}} \, \left(\frac{\log n}{n}\right)^{ \left(1- \frac{1}{\min\{q,2\}}\right)} & \text{for $p=1$}, \\[2mm]
					E'_{2} \left(\frac{\lambda^p -1}{2\lambda^p - 1}\right)^{\frac{1}{p}}\, n^{-\frac{1}{p}} & \text{for $p \geq q$}, \\[2mm]
					
					E_{4}(X)\,  \left(\frac{\lambda^p -1}{2\lambda^p - 1}\right)^{\frac{1}{p}} \,n^{-\frac{p-1}{p(q-1)}}\, \left( \frac{\log n}{n}\right)^{\left(1- \frac{1}{\min\{q,2\}}\right)\, \frac{q-p}{p(q-1)}} & \text{for $1<p< q$}. 
				\end{cases}
			\end{equation*}
			Here, $E'_{3}(X)$, $E'_{2}$, and $E_{4}(X)$ are positive constants: $E'_{3}(X)$ depends only on $X$, $E_{4}(X)$ depends on both $X$ and $p$, while $E'_{2}$ is independent of $X$ and depends only on $p$
	\end{cor}}

\section{Estimates of Second Bohr radii constants in infinite dimensional settings}
In this section, we investigate the asymptotic behaviour of the second Bohr radius constant in the infinite-dimensional setting. 
We show that this asymptotic estimate is governed by the geometric structure of the underlying Banach space $X$, more precisely, by its optimal cotype $\mathrm{Cot}(X)$.
\begin{thm} \label{thm-4.1}
	Let $X=\mathcal{B}(\mathcal{H})$ be an infinite dimensional complex Banach space  and $\lambda>1$. Let $1 \leq p,q < \infty$. Then 
	\begin{equation*}
		L_{\lambda}(B_{Z}, p,X) \geq	\begin{cases}
				E_{5}\, \dfrac{\left(\lambda^p -1\right)^{\frac{1}{p}}}{\lambda}\, \,  \frac{1}{ \norm{\mathrm{Id}: Z \rightarrow \ell^n_q}\,\|\mathrm{Id}:\ell_q^n\to Z\|\, n^{\frac{1}{q}}} & \text{for $p > q$}, \\[2mm]
			
			E_{6}\,\dfrac{\left(\lambda^p -1\right)^{\frac{1}{p}}}{\lambda}\, \,  \frac{1}{ \norm{\mathrm{Id}: Z \rightarrow \ell^n_\infty}\,\|\mathrm{Id}:\ell_\infty^n\to Z\|} & \text{for $p>\mathrm{Cot}(X)$}, \\[2mm]
			E_{7}\, \dfrac{\left(\lambda^p -1\right)^{\frac{1}{p}}}{\lambda}\,  \, \frac{ n^{\frac{1}{t}-\frac{1}{p}}}{\|\mathrm{Id}: Z\to \ell_\infty^n\|\,\|\mathrm{Id}:\ell_\infty^n\to Z\|} & \text{for $p\leq  \mathrm{Cot}(X)$},\\[2mm]
			E_{8}\, \dfrac{\left(\lambda^p -1\right)^{\frac{1}{p}}}{\lambda}\,   \frac{1}{ \norm{\mathrm{Id}: Z \rightarrow \ell^n_q}\,\|\mathrm{Id}:\ell_q^n\to Z\|\, \, n^{\frac{1}{p}}} & \text{for $p \leq q$}
		\end{cases}
	\end{equation*}
	if $X$ has a finite cotype $t$. Here $E_{5},E_{6},E_{7}$, and $E_{8}$ are strictly positive constants independent both of $X$ and $n$. 
	On the other hand, we have
	\begin{equation*}
		L_{\lambda}(B_{Z}, p,X) \leq 	
			E_9\,\lambda\, n^{\frac{1}{\mathrm{Cot}(X)}\, - \,\frac{1}{p}}\,\, \| \mathrm{Id}: Z \to \ell_\infty^n \|\, \|\mathrm{Id}: \ell_\infty^n \to Z\| 
	\end{equation*}
for some $E_9>0$ constant independent of $X$ and $n$.
\end{thm}
\begin{rem}
	The correspondence preceding Theorem \ref{thm-1.2} allows for an extension of Theorem \ref{thm-4.1} to bounded, simply connected, convex complete Reinhardt domains. Furthermore, by combining Theorem \ref{thm-4.1} with Lemma \ref{lem-3.5}, we obtain asymptotic estimates for all bounded, simply connected complete Reinhardt domains, not necessarily convex. 
\end{rem}
\subsection{Proof of lower bounds of Theorem \ref{thm-4.1}}
	We first establish the lower bound. We split the proof into several cases. 
	Let $H$, $G$, and $P$ be the functions defined in the proof of Theorem \ref{thm-1.2}. Let $X=\mathcal{B}(\mathcal{H})$, for short. Let $1 \leq p,q < \infty$.
	
	\medskip
	\noindent
	In view of \eqref{snd-e-4.6}, it is enough to estimate
	\[
	\sum_{|\alpha|=m}\big(\|a_\alpha\|^p+\|b_\alpha\|^p\big).
	\]
	
	\medskip
	\noindent
\underline{\textbf{Case $p >q$:}}
	Let
	\[
	g(z)=\sum_{|\alpha|=m} c_\alpha z^\alpha
	\]
	be a scalar-valued holomorphic polynomial on $B_{\ell_q^n}$ such that
	$\|g\|_{B_{\ell_q^n}}\le 1$. Then, by \cite[(3
	.7), p. 24]{defant-2006}, we have
	\begin{equation}\label{scalar-coeff}
		|c_\alpha|
		\le e^{\frac m q}\Big(\frac{m!}{\alpha!}\Big)^{\frac1q},
		\qquad |\alpha|=m.
	\end{equation}
	\noindent
	Let 
	$Id:\ell_q^n\longrightarrow Z$
	be the identity operator. By definition of the operator norm,
	\[
	\|z\|_Z=\|Iz\|_Z\le \|Id:\ell_q^n\to Z\|\,\|z\|_{\ell_q^n},
	\qquad z\in\mathbb C^n.
	\]
	Hence
	\[
	\frac{1}{\|Id:\ell_q^n\to Z\|}B_{\ell_q^n}\subset B_Z,
	\qquad\text{equivalently}\qquad
	B_{\ell_q^n}\subset \|Id:\ell_q^n\to Z\|\, B_Z.
	\]
	
	Let
	\[
	g_1(z)=\sum_{|\alpha|=m} d_\alpha z^\alpha
	\]
	be an $m$-homogeneous scalar-valued polynomial on $B_Z$ such that
	$\|g\|_{B_Z}\le 1$. Define
	\[
	h(w):=g_1\Big(\frac{w}{\|Id:\ell_q^n\to Z\|}\Big),
	\qquad w\in B_{\ell_q^n}.
	\]
	Since $\frac{w}{\|Id:\ell_q^n\to Z\|}\in B_Z$ for every $w\in B_{\ell_q^n}$,
	we have
	\[
	\norm{h}_{B_{\ell_q^n},\mathbb{C}}\le \norm{g_1}_{B_Z,\mathbb{C}}\le 1.
	\]
	Moreover, using $m$-homogeneity,
	\[
	h(w)=g_1\Big(\frac{w}{\|Id:\ell_q^n\to Z\|}\Big)
	=\sum_{|\alpha|=m} c_\alpha 
	\Big(\frac{w}{\|Id:\ell_q^n\to Z\|}\Big)^\alpha
	=\sum_{|\alpha|=m}\frac{c_\alpha}{\|Id:\ell_q^n\to Z\|^m}w^\alpha.
	\]
	Thus the coefficients of $h$ are $c_\alpha/\|I:\ell_q^n\to Z\|^m$.
	Applying the scalar estimate \eqref{scalar-coeff} on $B_{\ell_q^n}$ gives
	\[
	\frac{|c_\alpha|}{\|Id:\ell_q^n\to Z\|^m}
	\le e^{\frac mq}\Big(\frac{m!}{\alpha!}\Big)^{\frac1q},
	\qquad |\alpha|=m,
	\]
	and hence
	\begin{equation}\label{scalar-Z}
		|c_\alpha|
		\le e^{\frac mq}\Big(\frac{m!}{\alpha!}\Big)^{\frac1q}
		\|Id:\ell_q^n\to Z\|^m.
	\end{equation}
	
	Let
	\[
	H_1(z)=\sum_{|\alpha|=m} x_\alpha z^\alpha,
	\qquad z\in B_Z,
	\]
	be an $X$-valued $m$-homogeneous polynomial.
	For each $\varphi\in X^*$ with $\|\varphi\|\le 1$, the scalar-valued polynomial
	$\varphi\circ H_1$ satisfies
	\[
	\|\varphi\circ H_1\|_{B_Z,\mathbb{C}}\le \|H_1\|_{B_Z,X}.
	\]
	Applying \eqref{scalar-Z} to $\varphi\circ H_1$, we obtain
	\[
	|\varphi(x_\alpha)|
	\le e^{\frac mq}\Big(\frac{m!}{\alpha!}\Big)^{\frac1q}
	\|Id:\ell_q^n\to Z\|^m\,\|H_1\|_{B_Z,X}.
	\]
	Taking the supremum over $\varphi\in X^*$ with $\|\varphi\|\le 1$
	and using the Hahn--Banach theorem yields
	\begin{equation}\label{HB}
		\|x_\alpha\|
		\le e^{\frac mq}\Big(\frac{m!}{\alpha!}\Big)^{\frac1q}
		\|Id:\ell_q^n\to Z\|^m\,\|H_1\|_{B_Z,X},
		\qquad |\alpha|=m.
	\end{equation}
Applying this estimate to the functions $H$ and $G$, where $x_\alpha=(a_\alpha+b_\alpha)/2$ and $(a_\alpha-b_\alpha)/2$, respectively, we have
 \begin{align*}\label{HB-1}
 &	\Big\|\frac{a_\alpha+b_\alpha}{2}\Big\|
 	\le e^{\frac mq}\Big(\frac{m!}{\alpha!}\Big)^{\frac1q}
 	\|Id:\ell_q^n\to Z\|^m\,\|H\|_{B_Z,X},
 	\qquad |\alpha|=m,\\
 & \Big\|\frac{a_\alpha-b_\alpha}{2}\Big\|
 \le e^{\frac mq}\Big(\frac{m!}{\alpha!}\Big)^{\frac1q}
 \|Id:\ell_q^n\to Z\|^m\,\|G\|_{B_Z,X},
 \qquad |\alpha|=m.
 \end{align*}	
	By Lemma \ref{lem-3.1},
	\[
	\|H\|_{B_Z,X}\le 2\|P\|_{B_Z,X}\le 2,
	\]
	and therefore
	\begin{equation}\label{plus-q}
		\sum_{|\alpha|=m}\Big\|\frac{a_\alpha+b_\alpha}{2}\Big\|^q
		\le 2^q e^{m}\,\|Id:\ell_q^n\to Z\|^{mq} \, n^m.
	\end{equation}
Similarly, using the fact $\|G\|_{B_Z,X}\le 2\|P\|_{B_Z,X}\le 2$, we obtain
	\begin{equation}\label{minus-q}
		\sum_{|\alpha|=m}\Big\|\frac{a_\alpha-b_\alpha}{2}\Big\|^q
		\le 2^q e^{m}\,\|Id:\ell_q^n\to Z\|^{mq} \, n^m.
	\end{equation}
	Since $p>q$, the monotonicity of $\ell^r$-norms gives
	\[
	\sum_{|\alpha|=m}\Big\|\frac{a_\alpha\pm b_\alpha}{2}\Big\|^p
	\le \left(\sum_{|\alpha|=m}\Big\|\frac{a_\alpha\pm b_\alpha}{2}\Big\|^q\right)^{\frac pq}.
	\]
	Combining this with \eqref{plus-q} and \eqref{minus-q}, we get
	\begin{align*}
		\sum_{|\alpha|=m}\Big\|\frac{a_\alpha+b_\alpha}{2}\Big\|^p
		&\le 2^p e^{\frac{mp}{q}}\|Id:\ell_q^n\to Z\|^{mp}\, n^{\frac{mp}{q}}, \\
		\sum_{|\alpha|=m}\Big\|\frac{a_\alpha-b_\alpha}{2}\Big\|^p
		&\le 2^p e^{\frac{mp}{q}}\|Id:\ell_q^n\to Z\|^{mp}\, n^{\frac{mp}{q}}.
	\end{align*}
Using the last two inequalities and the elementary fact
\[
\Big\|\sum_{|\alpha|=m}a_\alpha\Big\|_p\le \Big\|\sum_{|\alpha|=m} \frac{a_\alpha+b_\alpha}{2}\Big\|_p+\Big\|\sum_{|\alpha|=m}\frac{a_\alpha-b_\alpha}{2}\Big\|_p,
\]
we deduce that
\begin{align*}
&\sum_{|\alpha|=m}\|a_\alpha\|^p
\le 2^{2p} e^{\frac{mp}{q}}\|Id:\ell_q^n\to Z\|^{mp} n^{\frac{mp}{q}},\\
& \sum_{|\alpha|=m}\|b_\alpha\|^p
\le 2^{2p} e^{\frac{mp}{q}}\|Id:\ell_q^n\to Z\|^{mp} n^{\frac{mp}{q}}.
\end{align*}
Substituting these bounds into \eqref{snd-e-4.6}, for $r \in (0,1)$, we obtain
\begin{align*}
& \sum_{|\alpha|=m} \sup_{z \in r \, B_{Z}} (\norm{a_{\alpha}}^p+\norm{b_{\alpha}}^p)|z|^{p\alpha}\\
& \leq r^{pm}\,2^{2p+1} e^{\frac{mp}{q}}\, \norm{Id: Z \rightarrow \ell^n_q}^{mp}\, \|Id:\ell_q^n\to Z\|^{mp}\, n^{\frac{mp}{q}}. 
\end{align*} 
Hence the right-hand side is bounded by $1$ whenever
\[
r\le \frac{1}{2^{\frac{2p+1}{pm}}e^{\frac{1}{q}}\, \norm{Id: Z \rightarrow \ell^n_q}\,\|Id:\ell_q^n\to Z\|\, n^{\frac{1}{q}}}.
\]
This gives 
\[
L(B_{Z}, p,X)\geq \frac{1}{2^{\frac{2p+1}{pm}}e^{\frac{1}{q}}\, \norm{Id: Z \rightarrow \ell^n_q}\,\|Id:\ell_q^n\to Z\|\, n^{\frac{1}{q}}}.
\]
Finally, in view of Lemma \ref{lem-3.3} (2), we deduce that
\begin{equation} \label{e-4.55}
		L_{\lambda}(B_{Z}, p,X) \geq \dfrac{\left(\lambda^p -1\right)^{\frac{1}{p}}}{\lambda}\, 2^{-\frac{2p+1}{p}}\, e^{-\frac{1}{q}}\,  \frac{1}{ \norm{Id: Z \rightarrow \ell^n_q}\,\|Id:\ell_q^n\to Z\|\, n^{\frac{1}{q}}}.
\end{equation}
This completes the proof for the case $p>q$.

\medskip
\noindent
\underline{\textbf{Case $p > \operatorname{Cot}(X)$:}} We first recall the following fact for any complex Banach space $X$ due to \cite{carando-2020}.
Let 
\[
\widetilde{H}(z)=\sum_{|\alpha|=m} x_{\alpha} z^\alpha
\]
be an $X$-valued $m$-homogeneous polynomial, where $X$ is a complex Banach space. 
Assume that $X$ has finite cotype $t$. Applying \cite[Theorem 3.1]{carando-2020} to $\widetilde{H}$, we obtain
\begin{equation} \label{e-1.35}
	\left(\sum_{|\alpha|=m} \|x_{\alpha}\|^t\right)^{\frac{1}{t}}
	\leq c^m \left(\int_{\mathbb{T}^n} \|\widetilde{H}(z)\|^t \, dz\right)^{\frac{1}{t}}
	\leq c^m \|\widetilde H\|_{\mathbb{D}^n,X},
\end{equation}
for some constant $c>0$ independent of $m$.

Let $p>\mathrm{Cot}(X)$. Then there exists $t\in[\mathrm{Cot}(X),p)$ such that $X$ has cotype $t$, and hence $X$ also has cotype $p$. Consequently, from \eqref{e-1.35} we obtain
\begin{equation}\label{eq:disc-est}
	\sum_{|\alpha|=m}\|x_{\alpha}\|^p
	\leq c^{mp}\|\widetilde H\|_{\mathbb{D}^n,X}^p.
\end{equation}

\medskip

We now transfer this estimate from the polydisc $\mathbb{D}^n$ to the unit ball of an arbitrary finite-dimensional Banach space. Recall that $Z=(\mathbb C^n,\|\cdot\|)$, and its unit ball is $B_Z$.
Consider the identity operator
\[
Id:(\mathbb C^n,\|\cdot\|_\infty)=\ell_\infty^n \longrightarrow Z.
\]
Then
\[
\|z\|\le \|Id:\ell_\infty^n\to Z\|\,\|z\|_\infty
\quad \text{for all } z\in\mathbb C^n,
\]
and therefore
\[
\frac{1}{\|Id:\ell_\infty^n\to Z\|}\mathbb D^n\subset B_Z,
\qquad\text{equivalently}\qquad
\mathbb D^n\subset \|Id:\ell_\infty^n\to Z\|\, B_Z.
\]
Note that, if $z\in \mathbb D^n$, then
$z=\|I:\ell_\infty^n\to Z\|\,w$ for some $w\in B_Z$. By homogeneity,
\[
\widetilde{H}(z)=\widetilde{H}\big(\|I:\ell_\infty^n\to Z\|\,w\big)
=\|I:\ell_\infty^n\to Z\|^m \widetilde{H}(w),
\]
and hence
\[
\|\widetilde{H}(z)\|
\le \|I:\ell_\infty^n\to Z\|^m \sup_{w\in B_Z}\|\widetilde{H}(w)\|.
\]
Taking supremum over $z\in\mathbb D^n$ yields
\[
\|\widetilde{H}\|_{\mathbb D^n,X}
\le \|I:\ell_\infty^n\to Z\|^m \|\widetilde{H}\|_{B_Z,X}.
\]
Combining this with \eqref{eq:disc-est}, we obtain
\begin{equation}\label{eq:ballZ}
	\sum_{|\alpha|=m}\|x_\alpha\|^p
	\le (c\,\|I:\ell_\infty^n\to Z\|)^{mp}\|\widetilde{H}\|_{B_Z,X}^p.
\end{equation}

\medskip

Finally, we apply \eqref{eq:ballZ} to the $m$-homogeneous polynomials
\[
H(z)=\sum_{|\alpha|=m}\frac{a_\alpha+b_\alpha}{2}\,z^\alpha,
\qquad
G(z)=\sum_{|\alpha|=m}\frac{a_\alpha-b_\alpha}{2}\,z^\alpha,
\quad z\in B_Z.
\]
Then
\[
\sum_{|\alpha|=m}\left\|\frac{a_\alpha+b_\alpha}{2}\right\|^p
\le (c_1\,\|I:\ell_\infty^n\to Z\|)^{mp}\|H\|_{B_Z,X}^p,
\]
and
\[
\sum_{|\alpha|=m}\left\|\frac{a_\alpha-b_\alpha}{2}\right\|^p
\le (c_2\,\|I:\ell_\infty^n\to Z\|)^{mp}\|G\|_{B_Z,X}^p
\]
for some constants $c_1,\,c_2>0$, independent of $m$.
Using these and the facts $\|H\|_{B_Z,X},\, \|G\|_{B_Z,X}\le 2\|P\|_{B_Z,X}\le 2$, we deduce that
\begin{align*}
	&\sum_{|\alpha|=m}\|a_\alpha\|^p
	\le 2^{p} \, \|Id:\ell_\infty^n\to Z\|^{mp} \,(c^m_1+c^m_2)^{p},\\
	& \sum_{|\alpha|=m}\|b_\alpha\|^p
	\le 2^{p} \, \|Id:\ell_\infty^n\to Z\|^{mp} \, (c^m_1+c^m_2)^{p}.
\end{align*}
These inequalities, together with \eqref{snd-e-4.6}, for $r \in (0,1)$, yield
\begin{align*}
	& \sum_{|\alpha|=m} \sup_{z \in r \, B_{Z}} (\norm{a_{\alpha}}^p+\norm{b_{\alpha}}^p)|z|^{p\alpha}\\
	& \leq r^{pm}\,2^{2p+1} d^{mp}\, \norm{Id: Z \rightarrow \ell^n_\infty}^{mp}\, \|Id:\ell_\infty^n\to Z\|^{mp}, 
\end{align*} 
where $d= \max\{c_1,c_2\}$ is independent of $n$.
Hence the right-hand side is bounded by $1$ whenever
\[
r\le \frac{1}{2^{\frac{2p+1}{pm}}\,d\, \norm{Id: Z \rightarrow \ell^n_\infty}\,\|Id:\ell_\infty^n\to Z\|},
\]
which shows
\[
L^m(B_{Z}, p,X)\geq \frac{1}{2^{\frac{2p+1}{pm}}\, d\, \norm{Id: Z \rightarrow \ell^n_\infty}\,\|Id:\ell_\infty^n\to Z\|}.
\]
Finally, in view of Lemma \ref{lem-3.3} (2), we conclude that
\begin{equation*}
	L_{\lambda}(B_{Z}, p,X) \geq \dfrac{\left(\lambda^p -1\right)^{\frac{1}{p}}}{\lambda}\, 2^{-\frac{2p+1}{p}}\, d^{-1}\,  \frac{1}{ \norm{Id: Z \rightarrow \ell^n_\infty}\,\|Id:\ell_\infty^n\to Z\|}.
\end{equation*}
This completes the proof for the case $p>\operatorname{Cot}(X)$.

\medskip
\noindent
\underline{\textbf{Case $p \leq \operatorname{Cot}(X)$:}}
We now consider the case $p \leq \operatorname{Cot}(X)$. Then $p \leq t$ whenever $X$ has cotype $t$. 
Let $P$ be as defined in Theorem \ref{thm-1.2}. Proceeding in a manner similar to the discussion in the case $p>\operatorname{Cot}(X)$, we obtain
\begin{equation}\label{eq:main1}
	\sum_{|\alpha|=m}\sup_{z\in B_Z} \left( \|a_\alpha\|^t+\|b_\alpha\|^t \right)|\delta z|^{t\alpha}\leq 1,
\end{equation}
for
\[
\delta=\frac{c}{\|Id: Z\to \ell_\infty^n\|\,\|Id:\ell_\infty^n\to Z\|},
\]
where $c>0$ is an absolute constant.

Let $r=\gamma \, \delta \in(0,1)$. Using H\"{o}lder's inequality together with \eqref{eq:main1}, we obtain
\begin{align*}
	\sum_{|\alpha|=m}\sup_{z\in r \, B_Z}\left(\|a_\alpha\|^p+\|b_\alpha\|^p\right)|z|^{p\alpha}
	&=\sum_{|\alpha|=m}\sup_{z\in B_Z}\left(\|a_\alpha\|^p+\|b_\alpha\|^p\right)|\gamma \, \delta z|^{p\alpha}\\
	&= \gamma^{pm}\sum_{|\alpha|=m}\sup_{z\in B_Z}\left(\|a_\alpha\|^p+\|b_\alpha\|^p\right)|\delta z|^{p\alpha}\\
	&\leq \gamma^{pm}\left[\sum_{|\alpha|=m}\sup_{z\in B_Z}\|a_\alpha\|^t|\delta z|^{t\alpha}\right]^{\frac{p}{t}}
	\left(\sum_{|\alpha|=m}1\right)^{1-\frac{p}{t}}\\
	&\quad + \gamma^{pm}\left[\sum_{|\alpha|=m}\sup_{z\in B_Z}\|b_\alpha\|^t|\delta z|^{t\alpha}\right]^{\frac{p}{t}}
	\left(\sum_{|\alpha|=m}1\right)^{1-\frac{p}{t}}.
\end{align*}
By \eqref{eq:main1}, both sums inside the brackets are bounded by $1$, and hence
\begin{equation}\label{eq:main2}
	\sum_{|\alpha|=m}\sup_{z\in B_Z}\left(\|a_\alpha\|^p+\|b_\alpha\|^p\right)|rz|^{p\alpha}
	\leq 2\, \gamma^{pm}\left(\sum_{|\alpha|=m}1\right)^{1-\frac{p}{t}}.
\end{equation}
Using the following standard estimate
\[
\sum_{|\alpha|=m}1=\binom{n+m-1}{m}\leq e^m n^m,
\]
we deduce from \eqref{eq:main2} that
\[
\sum_{|\alpha|=m}\sup_{z\in B_Z}\left(\|a_\alpha\|^p+\|b_\alpha\|^p\right)|rz|^{p\alpha}
\leq C_3^m \gamma^{pm} n^{m\left(1-\frac{p}{t}\right)},
\]
for some constant $C_3>0$ independent of $m$.

The right-hand side is less than or equals to $1$ provided
\[
\gamma\leq C_4 \, n^{\frac{1}{t}-\frac{1}{p}},
\]
for some constant $C_4>0$. Consequently,
\[
r\leq C_5 \, \frac{ n^{\frac{1}{t}-\frac{1}{p}}}{\|Id: Z\to \ell_\infty^n\|\,\|Id:\ell_\infty^n\to Z\|},
\]
for some constant $C_5>0$ independent of $n$. This shows
\[
L^m(B_{Z}, p,X)\geq C_5 \, \frac{ n^{\frac{1}{t}-\frac{1}{p}}}{\|Id: Z\to \ell_\infty^n\|\,\|Id:\ell_\infty^n\to Z\|}.
\]
Finally, in view of Lemma \ref{lem-3.3} (2), we conclude that
\begin{equation*}
	L_{\lambda}(B_{Z}, p,X) \geq C_5 \, \dfrac{\left(\lambda^p -1\right)^{\frac{1}{p}}}{\lambda}\,  \, \frac{ n^{\frac{1}{t}-\frac{1}{p}}}{\|Id: Z\to \ell_\infty^n\|\,\|Id:\ell_\infty^n\to Z\|}.
\end{equation*}
This completes the proof for the case $p\leq \operatorname{Cot}(X)$.

\medskip

\noindent
\underline{\textbf{Case $p\leq q$:}}
Proceeding similarly as in the case $p>q$, we obtain
\[
\sum_{|\alpha|=m}\sup_{z\in B_Z}\left(\|a_\alpha\|^q+\|b_\alpha\|^q\right)|\delta z|^{q\alpha}\leq 1,
\]
for
\[
\delta=\frac{C_6}{\|I_d: Z\to \ell_q^n\|\,\|I_d:\ell_q^n\to Z\|\,n^{1/q}},
\]
where $C_6>0$ is a constant. Repeating the above argument, we obtain
\[
L^m(B_Z,p,X)\geq C_7\,
\frac{1}{\|Id: Z\to \ell_q^n\|\,\|Id:\ell_q^n\to Z\|\,n^{1/p}},
\]
for some constant $C_7>0$ independent of $n$. 
Finally, in view of Lemma \ref{lem-3.3} (2), we conclude that
\begin{equation*}
	L_{\lambda}(B_{Z}, p,X) \geq C_7\, \dfrac{\left(\lambda^p -1\right)^{\frac{1}{p}}}{\lambda}\,   \frac{1}{ \norm{Id: Z \rightarrow \ell^n_q}\,\|Id:\ell_q^n\to Z\|\, \, n^{1/p}}.
\end{equation*}
This completes the proof.

\medskip
In order to prove the upper bound, we shall make use of the following result.
\begin{customthm}{A} \cite[Theorem 14.5]{diestel-abs-summing-1995} \label{thm-A}
	Given any infinite dimensional complex Banach space $X$, there exist $x_{1}, \ldots , x_{n} \in X$ for each $n \in \mathbb{N}$ such that $\norm{z}_{\infty}/2 \leq \norm{\sum_{j=1}^{n} x_{j}z_{j}} \leq \norm{z}_{Cot(X)}$  for every choice of $z=(z_{1}, \ldots,z_{n}) \in \mathbb{C}^n$. Clearly, setting $z=e_{j}$, gives $\norm{x_{j}}\geq 1/2$, where $e_{j}$ is the $j$-th canonical basis vector of $\mathbb{C}^n$.
\end{customthm}
\subsection{Proof of upper bounds of Theorem \ref{thm-4.1}}
We now derive an upper bound for $L_{\lambda}(B_{Z}, p,X)$. Since $L_{\lambda}(\mathbb{D}^n, p,X)=R_{\lambda}(\mathbb{D}^n, p,X)$, it suffices to estimate $R_{\lambda}(\mathbb{D}^n, p,X)$. The desired upper bound for $L_{\lambda}(B_Z, p,X)$ will then follow from Lemma \ref{lem-3.5}. 

\medskip
By evaluating Theorem \ref{thm-A} at $z=e_j$, we obtain that for each $n \in \mathbb{N}$ there exist vectors $a_{1}, \ldots, a_{n} \in X$ such that \[ \|a_{j}\| \geq \frac{1}{2}, \qquad 1\leq j \leq n. \] Consider the $1$-homogeneous pluriharmonic polynomial $F \in \mathcal{PH}(^1\mathbb{D}^n,X)$ defined by 
\[ F(z)=\sum_{j=1}^{n} z_{j}a_{j} + \sum_{j=1}^{n} a^{*}_{j}\,\overline{z_{j}}. 
\] From the definition of $R^{1}(\mathbb{D}^n,p,X)$ and a direct computation, we obtain \begin{equation}\label{e-1.39} \frac{2n}{2^{p}} \leq \sum_{j=1}^{n} \bigl(\|a_{j}\|^{p}+\|a_{j}\|^{p}\bigr) \leq \frac{\|F\|^{p}_{\mathbb{D}^n,X}} {\bigl(R^{1}(\mathbb{D}^n,p,X)\bigr)^{p}}. 
\end{equation} 
Applying Theorem \ref{thm-A} once more, we have \[ \|F(z)\| \leq 2 \left\| \sum_{j=1}^{n} z_{j}a_{j} \right\| \leq 2\,\|z\|_{\mathrm{Cot}(X)}. \]
 Combining this estimate with \eqref{e-1.39}, we deduce that \[ \frac{2n}{2^{p}} \leq \frac{2^{p}} {\bigl(R^{1}(\mathbb{D}^n,p,X)\bigr)^{p}} \sup_{z\in \mathbb{D}^n} \|z\|_{\mathrm{Cot}(X)}^{p}. \] Since \[ \sup_{z\in \mathbb{D}^n}\|z\|_{\ell} = n^{1/\ell}, \qquad 1\leq \ell \leq \infty, \] it follows from the previous inequality that \[ R^{1}(\mathbb{D}^n,p,X) \leq 2^{\frac{2p-1}{p}}\, \frac{1}{n^{\frac{1}{p}-\frac{1}{\mathrm{Cot}(X)}}}. \] It is known that \[ R_{\lambda}(\mathbb{D}^n,p,X) \leq R^{1}_{\lambda}(\mathbb{D}^n,p,X) =\lambda\,R^{1}(\mathbb{D}^n,p,X), \] and therefore \[ L_{\lambda}(\mathbb{D}^n,p,X) = R_{\lambda}(\mathbb{D}^n,p,X) \leq 2^{\frac{2p-1}{p}}\, \lambda\, \frac{1}{n^{\frac{1}{p}-\frac{1}{\mathrm{Cot}(X)}}}. \] Finally, applying Lemma \ref{lem-3.5}, we conclude that \[ L_{\lambda}(B_Z,p,X) \leq C\,\lambda\, \frac{1}{n^{\frac{1}{p}-\frac{1}{\mathrm{Cot}(X)}}} \,\| \mathrm{Id}: Z \to \ell_\infty^n \|\, \|\mathrm{Id}: \ell_\infty^n \to Z\| \] 
 for some constant $C>0$ independent both of $X$ and $n$. This completes the proof.
\section{Applications to Banach sequence spaces}
This section presents several applications of our results to various classes of Banach sequence spaces, which have wide applications.

\medskip
A Banach space $Z$ satisfying $\ell_1 \subset Z \subset c_{0}$ with normal inclusions is called a Banach sequence space if its canonical basis vectors $\{e_{k}\}$ forms a $1$-unconditional basis. A Banach lattice $Z$ is said to be $2$-convex if there exists a constant $C>0$ such that 
$$\norm{\left(\sum_{k=1}^{n}|x_{k}|^2\right)^{\frac{1}{2}}} \leq C 	\left(\sum_{k=1}^{n}\norm{x_{k}}^2\right)^{\frac{1}{2}}
$$
for all finite families $x_{1}, \ldots, x_{k} \in Z$. 

\medspace
\noindent As an application of Theorem \ref{thm-1.2}, we deduce the following asymptotic estimates for Banach sequence spaces in finite dimensional settings.
\begin{thm} \label{thm-1.3-a}
	Let $Z$ be a Banach sequence space, $X=\mathcal{B}(\mathcal{H})$ be finite dimensional, and $\lambda>1$. For $n\in \mathbb{N}$, let $Z_{n}$ be the linear span of $\{e_{k}, \, k=1,\ldots,n\}$. Then for $1\leq q \leq \infty$
	\begin{enumerate}
		\item if $Z$ is a subset of $\ell_2$ we have \begin{equation*}
			L_{\lambda}(B_{Z_{n}}, p,X) \geq	\begin{cases}
				E_{1}(X)\, \frac{1}{s\,t\, n^{\frac{1}{q}}} \, \max \left\{\frac{1}{\sqrt{n}\, \norm{\mathrm{Id}: \ell^n_{2} \rightarrow Z_{n}}}, \, \frac{1}{e\, \norm{\sum_{k=1}^{n}e^{*}_{k}}_{Z^{*}}}\right\}\, \left(\frac{\lambda^p -1}{2\lambda^p - 1}\right)^{\frac{1}{p}} & \text{for $p=1$}, \\[2mm]
				E_{3}\, \frac{1}{s}\,\norm{\mathrm{Id}:\ell^n _2 \rightarrow Z_{n}}^{-\frac{2}{p}}\, \left(\frac{\lambda^p -1}{2\lambda^p - 1}\right)^{\frac{1}{p}}\, & \text{for $p \geq 2$}, \\[2mm]
				
				\frac{E_{2}(X)}{s\left(t n^{\frac{1}{q}}\right)^{\frac{(2-p)}{p}}}\,  \max \left\{\frac{1}{(\sqrt{n})^{1-\theta} \norm{\mathrm{Id}: \ell^n_{2} \rightarrow Z_{n}}}, \frac{\norm{\mathrm{Id}:\ell^n _2 \rightarrow Z_{n}}^{-\theta}}{e^{(1-\theta)} \norm{\sum_{k=1}^{n}e^{*}_{k}}_{Z^{*}}^{1-\theta}}\right\}\left(\frac{\lambda^p -1}{2\lambda^p - 1}\right)^{\frac{1}{p}} \hspace{-3mm} & \text{for $1<p< 2$}. 
			\end{cases}
		\end{equation*}
		and 
		$$L_{\lambda}(B_{Z_{n}}, p,X) \leq d\,  \lambda^{\frac{1}{\log \,n}}\,b_{[\log\,n]}(B_Z)\, \,n^{-\frac{1}{p}};$$
		\item if $Z$ is symmetric and $2$-convex, we have
		\begin{equation*}
			L_{\lambda}(B_{Z_{n}}, p,X) \geq	\begin{cases}
				E_{1}(X)\, \frac{1}{s\,t\, n^{\frac{1}{q}}} \, \max \left\{\frac{1}{\sqrt{n}}, \, \frac{1}{e\, \norm{\sum_{k=1}^{n}e^{*}_{k}}_{Z^{*}}}\right\}\, \left(\frac{\lambda^p -1}{2\lambda^p - 1}\right)^{\frac{1}{p}} & \text{for $p=1$}, \\[2mm]
				E_{3}\,\frac{1}{s}\, \left(\frac{\lambda^p -1}{2\lambda^p - 1}\right)^{\frac{1}{p}}\, & \text{for $p \geq 2$}, \\[2mm]
				
				\frac{E_{2}(X)}{s\left(t n^{\frac{1}{q}}\right)^{\frac{(2-p)}{p}}}\,  \max \left\{\frac{1}{(\sqrt{n})^{1-\theta}}, \, \frac{1}{e^{(1-\theta)}\, \norm{\sum_{k=1}^{n}e^{*}_{k}}_{Z^{*}}^{1-\theta}}\right\}\left(\frac{\lambda^p -1}{2\lambda^p - 1}\right)^{\frac{1}{p}}  & \text{for $1<p< 2$}. 
			\end{cases}
		\end{equation*}
		and 
		$$L_{\lambda}(B_{Z}, p,X) \leq d\,  \lambda^{\frac{1}{\log \,n}}\,b_{[\log\,n]}(B_Z)\, \norm{\sum_{k=1}^{n}e^{*}_{k}}_{Z^{*}}\,\,n^{-\left(\frac{1}{p}+\frac{1}{2}\right)}.
		$$
	\end{enumerate}
	Here $s$, $t$, and the constants $E_{1}, E_{2}, E_{3}$, and $d$ are same as in Theorem \ref{thm-1.2}.
\end{thm}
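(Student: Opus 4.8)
The plan is to deduce the asserted estimates from Theorem \ref{thm-1.2} by specializing the general Banach space $Z=(\mathbb{C}^n,\|\cdot\|)$ to the finite sections $Z_n$ of a Banach sequence space and then simplifying the mixed-norm factors $\|Id:\ell^n_2\to Z_n\|$, $\|Id:Z_n\to\ell^n_1\|$, and $\left(\tfrac{m^m}{m!}\right)^{1/m}$ that appear there. First I would recall that since $Z$ is a Banach sequence space its canonical basis is $1$-unconditional, so $\chi(\{e_k\}_{k=1}^n)=1$ and Theorem \ref{thm-1.2} applies verbatim with $Z$ replaced by $Z_n$; the supremum identity $\sup_{m\in\mathbb N}(m^m/m!)^{1/m}=e$ has already been used to pass from the $m$-homogeneous estimates to the non-homogeneous ones, so the factor $e$ appearing in the statement is exactly that. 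The key reduction in both parts is the duality identity $\|Id:Z_n\to\ell^n_1\|=\|\sum_{k=1}^n e_k^*\|_{Z_n^*}$, and since $Z_n^*$ embeds isometrically into $Z^*$ (the functionals being supported on the first $n$ coordinates) this equals $\|\sum_{k=1}^n e_k^*\|_{Z^*}$; substituting this into the three cases $p=1$, $p\ge 2$, $1<p<2$ of Theorem \ref{thm-1.2} immediately yields the lower bounds in part (1), and likewise the upper bound is just the upper bound of Theorem \ref{thm-1.2} together with $S(B_{Z_n},B_{\ell^n_2})=\|Id:Z_n\to\ell^n_2\|$.

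For part (2), the extra hypotheses — $Z$ symmetric and $2$-convex — are precisely what is needed to replace the factor $\|Id:\ell^n_2\to Z_n\|$ by the cleaner quantity $\sqrt n$ (up to an absorbed constant). Here I would invoke the standard local-Banach-space fact (as used in \cite{defant-2003}) that for a $2$-convex symmetric sequence space one has $\|Id:\ell^n_2\to Z_n\|\asymp \|Id:\ell^n_1\to Z_n\|/\sqrt n$ up to the $2$-convexity constant, equivalently that $2$-convexity gives a reverse comparison making the $\ell^n_2\to Z_n$ norm comparable to $\sqrt n$ times the norm of $e_1$, which is $1$ by symmetry and normalization. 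Plugging $\|Id:\ell^n_2\to Z_n\|\asymp\sqrt n$ (and again the duality identity for the $\ell^n_1$-norm) into each of the three cases of Theorem \ref{thm-1.2} gives the displayed lower bounds, while for the upper bound one additionally uses $\|Id:Z_n\to\ell^n_2\|\le\sqrt n\,\|Id:Z_n\to\ell^n_\infty\|\le\sqrt n$ combined with the estimate $\|Id:Z_n\to\ell^n_2\|$ being comparable to $n^{-1/2}\|\sum_{k=1}^n e_k^*\|_{Z^*}$ — which is exactly the content of $2$-convexity applied to the dual — to obtain the factor $\|\sum_{k=1}^n e_k^*\|_{Z^*}\,n^{-(1/p+1/2)}$ in the displayed upper bound of part (2).

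The routine work is bookkeeping: keeping track of how the constants $E_1(X),E_2(X),E_3,d$ survive unchanged (they are the same constants as in Theorem \ref{thm-1.2}, since no new estimates are introduced) and verifying that the parameter $\theta=2(p-1)/p$ enters each factor with the correct exponent in the $1<p<2$ case. The main obstacle — and the only genuinely nontrivial point — is the $2$-convexity estimate in part (2): one must argue carefully that $2$-convexity of the symmetric space $Z$ (equivalently, the concavity-type property of its dual) yields a \emph{two-sided} comparison of $\|Id:\ell^n_2\to Z_n\|$ with $\sqrt n$ rather than merely the trivial one-sided bound $\|Id:\ell^n_2\to Z_n\|\le\sqrt n\,\|Id:\ell^n_\infty\to Z_n\|=\sqrt n$; this is where the hypothesis is actually used, and where one should cite the appropriate result from \cite{defant-2003} (or \cite{Himadri-local-Banach-1}, where the analogous first–Bohr–radius computation was carried out). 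Once that comparison is in hand, parts (1) and (2) follow by direct substitution into Theorem \ref{thm-1.2} together with Lemma \ref{lem-3.5} to cover all bounded simply connected complete Reinhardt domains.
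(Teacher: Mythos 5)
Your part (1) is essentially the paper's argument: Theorem \ref{thm-1.2} applies to $Z_n$ because the canonical basis of a Banach sequence space is $1$-unconditional, and the duality identity $\norm{Id:Z_n\rightarrow\ell_1^n}=\norm{\sum_{k=1}^n e_k^*}_{Z^*}$ converts the lower bounds. One omission: for the upper bound $d\,\lambda^{1/\log n}b_{[\log n]}(B_Z)n^{-1/p}$ you need $\norm{Id:Z_n\rightarrow\ell_2^n}\le 1$, which the paper obtains by explicitly normalizing the inclusion $Z\subset\ell_2$ to have norm one; you should say this, since otherwise the factor $\norm{Id:Z_n\rightarrow\ell_2^n}$ does not disappear.

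Part (2) has a genuine gap. You assert that symmetry and $2$-convexity make $\norm{Id:\ell^n_2\rightarrow Z_n}$ comparable to $\sqrt{n}$, justified by the ``standard fact'' $\norm{Id:\ell^n_2\rightarrow Z_n}\asymp\norm{Id:\ell^n_1\rightarrow Z_n}/\sqrt{n}$. Both claims are false: for $Z=\ell_2$ (symmetric and $2$-convex) one has $\norm{Id:\ell^n_2\rightarrow\ell^n_2}=1$ while $\norm{Id:\ell^n_1\rightarrow\ell^n_2}/\sqrt{n}=1/\sqrt{n}$, and in general $\norm{Id:\ell^n_2\rightarrow Z_n}\ge\norm{e_1}_Z=1$. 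Moreover, substituting $\sqrt{n}$ into Theorem \ref{thm-1.2} would give $1/n$ (case $p=1$) and $n^{-1/p}$ (case $p\ge 2$), not the stated $1/\sqrt{n}$ and $O(1)$ lower bounds, so your step would fail. What the hypothesis actually provides is the opposite normalization: by \cite[Theorem 1.d.5]{Lindenstrauss-book} one may renorm $Z$ so that its $2$-convexity constant equals $1$, and then $\norm{z}_Z=\norm{(\sum_k|z_ke_k|^2)^{1/2}}_Z\le(\sum_k|z_k|^2\norm{e_k}^2)^{1/2}=\norm{z}_2$ yields $\norm{Id:\ell^n_2\rightarrow Z_n}=1$; this is the substitution that produces the displayed lower bounds. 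The identity $\norm{Id:Z_n\rightarrow\ell^n_2}=\norm{\sum_{k=1}^n e_k^*}_{Z^*}/\sqrt{n}$ from \cite[(5.3)]{defant-2003}, which you do invoke correctly, enters only in the upper bound of part (2). Finally, Lemma \ref{lem-3.5} is not needed anywhere in this proof.
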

\begin{proof} [{\bf Proof of Theorem \ref{thm-1.3-a}}]
	We start by recalling two important facts from the theory of finite-dimensional Banach spaces with unconditional bases. Let $W=(\mathbb{C}^n, ||.||)$ be a Banach space whose canonical unit vectors $e_{k}$ form a normalized $1$-unconditional basis. Then  $\norm{\mathrm{Id}: W \rightarrow \ell_2^n}=\sup_{z \in B_{W}}(\sum_{m=1}^{n}|z_{m}|^2)^{1/2}$. Moreover, $\norm{\mathrm{Id}: W \rightarrow \ell_1^n}=\sup_{z \in B_{W}} \sum_{m=1}^{n}|z_{m}|=\norm{\sum_{m=1}^{n}e^{*}_{m}}_{W^{*}}$ for all $z \in W$. 
	\medspace
	
	Since $Z \subset \ell_2$, we may normalize the inclusion map $Z \subset \ell_2$ to have norm one. Under this assumption, assertion $(1)$ follows immediately from Theorem \ref{thm-1.2}, in combination with the above expressions for the norms of the identity embeddings.
	\vspace{1mm}
	
	We now turn to the proof of part $(2)$. By \cite[Theorem 1.d.5]{Lindenstrauss-book}, we may assume without loss of generality that $M^{(2)}(X)=1$, which in turn implies $\norm{\mathrm{Id}: \ell_2^n \rightarrow Z_{n}}=1$. We next recall an important estimate from  \cite[(5.3), p. 191]{defant-2003}: if $W=(\mathbb{C}^n, ||.||)$ is a symmetric Banach space satisfying $M^{(2)}(W)=1$, then 
	\begin{equation} \label{e-5.4-added}
		\norm{\mathrm{Id}: W \rightarrow \ell^n_2}= \frac{\norm{\sum_{m=1}^{n}e^{*}_{m}}_{W^{*}}}{\sqrt{n}}.
	\end{equation}
	The desired conclusion now follows by combining Theorem \ref{thm-1.2}, the above identities for identity embeddings, and inequality \eqref{e-5.4-added}.The proof is therefore complete.	
\end{proof}
In light of Theorem \ref{thm-4.1} and the arguments employed in the proof of Theorem \ref{thm-1.3-a}, we deduce the following asymptotic estimates for Banach sequence spaces in infinite-dimensional framework. 
\begin{thm} \label{thm-1.6}
	Let $Z$ be a Banach sequence space and $X=\mathcal{B}(\mathcal{H})$ be infinite dimensional. For $n\in \mathbb{N}$, let $Z_{n}$ be the linear span of $\{e_{k}, \, k=1,\ldots,n\}$. Then for $\lambda>1$
	\begin{enumerate}
		\item if $Z$ is subset of $\ell_2$, we have 
		$$  L_{\lambda}(B_{Z_{n}}, p,X) \geq \begin{cases}
			E_{5}\, \dfrac{\left(\lambda^p -1\right)^{\frac{1}{p}}}{\lambda}\, \,  \frac{1}{ \|\mathrm{Id}:\ell_2^n\to Z_n\|\, n^{\frac{1}{2}}} & \text{for $p > 2$}, \\[2mm]
			
			E_{6}\,\dfrac{\left(\lambda^p -1\right)^{\frac{1}{p}}}{\lambda}\, \,  \frac{1}{ \norm{\mathrm{Id}: Z_n \rightarrow \ell^n_\infty}\,\|\mathrm{Id}:\ell_\infty^n\to Z_n\|} & \text{for $p>\mathrm{Cot}(X)$}, \\[2mm]
			E_{7}\, \dfrac{\left(\lambda^p -1\right)^{\frac{1}{p}}}{\lambda}\,  \, \frac{ n^{\frac{1}{t}-\frac{1}{p}}}{\|\mathrm{Id}: Z_n\to \ell_\infty^n\|\,\|\mathrm{Id}:\ell_\infty^n\to Z_n\|} & \text{for $p\leq  \mathrm{Cot}(X)$},\\[2mm]
			E_{8}\, \dfrac{\left(\lambda^p -1\right)^{\frac{1}{p}}}{\lambda}\,   \frac{1}{ \|\mathrm{Id}:\ell_2^n\to Z_n\|\, \, n^{\frac{1}{p}}} & \text{for $p \leq 2$};
		\end{cases}$$
		and
		\begin{equation*}
			L_{\lambda}(B_{Z_n}, p,X) \leq 	
			E_9\,\lambda\, n^{\frac{1}{\mathrm{Cot}(X)}\, - \,\frac{1}{p}}\,\, \| \mathrm{Id}: Z_n \to \ell_\infty^n \|\, \|\mathrm{Id}: \ell_\infty^n \to Z_n\|. 
		\end{equation*}
		\item if $Z$ is symmetric and $2$-convex, we have $$  L_{\lambda}(B_{Z_{n}}, p,X) \geq \begin{cases}
			E_{5}\, \dfrac{\left(\lambda^p -1\right)^{\frac{1}{p}}}{\lambda}\, \,  \frac{1}{ \norm{\sum_{k=1}^{n}e^{*}_{k}}_{Z^{*}}} & \text{for $p > 2$}, \\[2mm]
			
			E_{6}\,\dfrac{\left(\lambda^p -1\right)^{\frac{1}{p}}}{\lambda}\, \,  \frac{1}{ \norm{\mathrm{Id}: Z_n \rightarrow \ell^n_\infty}\,\|\mathrm{Id}:\ell_\infty^n\to Z_n\|} & \text{for $p>\mathrm{Cot}(X)$}, \\[2mm]
			E_{7}\, \dfrac{\left(\lambda^p -1\right)^{\frac{1}{p}}}{\lambda}\,  \, \frac{ n^{\frac{1}{t}-\frac{1}{p}}}{\|\mathrm{Id}: Z_n\to \ell_\infty^n\|\,\|\mathrm{Id}:\ell_\infty^n\to Z_n\|} & \text{for $p\leq  \mathrm{Cot}(X)$},\\[2mm]
			E_{8}\, \dfrac{\left(\lambda^p -1\right)^{\frac{1}{p}}}{\lambda}\,   \frac{1}{ \norm{\sum_{k=1}^{n}e^{*}_{k}}_{Z^{*}}\, \, n^{\frac{1}{p}-\frac{1}{2}}} & \text{for $p \leq 2$};
		\end{cases}$$
		and
		\begin{equation*}
			L_{\lambda}(B_{Z_n}, p,X) \leq 	
			E_9\,\lambda\, n^{\frac{1}{\mathrm{Cot}(X)}\, - \,\frac{1}{p}}\,\, \| \mathrm{Id}: Z_n \to \ell_\infty^n \|\, \|\mathrm{Id}: \ell_\infty^n \to Z_n\|. 
		\end{equation*}	
	\end{enumerate}
	Here the constants $E_5, \ldots, E_9$ are as in Theorem \ref{thm-4.1}.
\end{thm}
We now turn to several standard examples of Banach sequence spaces and apply our results to these settings.

\medskip
 We begin with the family of mixed Minkowski spaces. Recall that
$$\ell^m_s(\ell^n_t):=\{(z_{k})^m_{k=1}:z_{1}, \ldots,z_{m} \in \mathbb{C}^n\}$$ 
equipped with the norm $\norm{(z_{k})^m_{k=1}}_{s,t}:=(\sum_{k=1}^{m}\norm{z_{k}}^s_t)^{1/s}$. In the special case when $s=t$, the spaces $\ell^m_s(\ell^n_t)$ coincide with the classical Minkowski spaces $\ell^{mn}_s$. The framework developed in this article naturally includes this setting as a particular case while simultaneously accommodating a significantly broader class of sequence spaces. This unified perspective allows us to investigate both holomorphic and pluriharmonic functions in general sequence space settings.
\begin{example}[{\bf Mixed spaces}]
	Let $m,n \geq 2$. Then
	\begin{enumerate}
		\item Let $1 \leq s ,t \leq 2$. If $X=\mathcal{B}(\mathcal{H})$ is finite dimensional, then
		\begin{equation*}
			L_{\lambda}(B_{\ell^m_s(\ell^n_t)}, p,X) \geq	\begin{cases}
				E_{1}(X)\,  \, \max \left\{\frac{1}{n^{2/t}\,m^{2/s}}, \, \frac{1}{e\,mn }\right\}\, \left(\frac{\lambda^p -1}{2\lambda^p - 1}\right)^{\frac{1}{p}} & \text{for $p=1$}, \\[2mm]
				E_{3}\, n^{\frac{1}{p}-\frac{2}{tp}}\,m^{\frac{1}{p}-\frac{2}{sp}} \left(\frac{\lambda^p -1}{2\lambda^p - 1}\right)^{\frac{1}{p}}\, & \text{for $p \geq 2$}, \\[2mm]
				
				E_{2}(X)  \max \left\{\frac{1}{ n^{\frac{2+t-tp}{tp}}m^{\frac{2+s-sp}{sp}}},  \frac{m^{\frac{2+2ps-3s-2p}{ps}}}{e^{(1-\theta)} n^{\frac{3t+2p-2-2pt}{pt}}}\right\}\left(\frac{\lambda^p -1}{2\lambda^p - 1}\right)^{\frac{1}{p}} \hspace{-1mm} & \text{for $1<p< 2$}, 
			\end{cases}
		\end{equation*}
		and 
		$$L_{\lambda}(B_{\ell^m_s(\ell^n_t)}, p,X) \leq d\,\lambda^{\frac{1}{\log \,mn}}\,b_{[\log\,mn]}(B_{\ell^m_s(\ell^n_t)})\, \,\frac{1}{(mn)^{\frac{1}{p}}}.
		$$
		If $X=\mathcal{B}(\mathcal{H})$ is infinite dimensional then
		$$L_{\lambda}(B_{\ell^m_s(\ell^n_t)}, p,X) \geq	\begin{cases}
			E_{5}\, \dfrac{\left(\lambda^p -1\right)^{\frac{1}{p}}}{\lambda}\, \,  \frac{1}{ n^{\frac{1}{t}}\, m^{\frac{1}{s}}} & \text{for $p > 2$}, \\[2mm]
			
			E_{6}\,\dfrac{\left(\lambda^p -1\right)^{\frac{1}{p}}}{\lambda}\, \,  \frac{1}{ n^{\frac{1}{t}}\, m^{\frac{1}{s}}} & \text{for $p>\mathrm{Cot}(X)$}, \\[2mm]
			E_{7}\, \dfrac{\left(\lambda^p -1\right)^{\frac{1}{p}}}{\lambda}\,  \, \frac{ 1}{n^{\frac{1}{t}+\frac{1}{p}-\frac{1}{r}}\, m^{\frac{1}{s}+\frac{1}{p}-\frac{1}{r}}} & \text{for $p\leq  \mathrm{Cot}(X)$},\\[2mm]
			E_{8}\, \dfrac{\left(\lambda^p -1\right)^{\frac{1}{p}}}{\lambda}\,   \frac{1}{ n^{\frac{1}{t}+\frac{1}{p}-\frac{1}{2}}\, m^{\frac{1}{s}+\frac{1}{p}-\frac{1}{2}}} & \text{for $p \leq 2$},
		\end{cases}
		$$ 
		whenever $X$ has cotype $r$. Moreover,
		\begin{equation*}
			L_{\lambda}(B_{\ell^m_s(\ell^n_t)}, p,X) \leq E_9\,\lambda\, n^{\frac{1}{\mathrm{Cot}(X)}\, - \,\frac{1}{p}+\frac{1}{t}}\,\, m^{\frac{1}{\mathrm{Cot}(X)}\, - \,\frac{1}{p}+\frac{1}{s}}.
		\end{equation*}
		\item Let $2 \leq s,t \leq \infty$. If $X=\mathcal{B}(\mathcal{H})$ is finite dimensional then
		\begin{equation*}
			R_{\lambda}(B_{\ell^m_s(\ell^n_t)}, p,X) \geq	\begin{cases}
				E_{1}(X)\,  \, \max \left\{\frac{1}{n^{\frac{3}{2}-\frac{1}{t}}m^{\frac{3}{2}-\frac{1}{s}}}, \, \frac{1}{e\, n^{2-\frac{2}{t}}m^{2-\frac{2}{s}}}\right\}\, \left(\frac{\lambda^p -1}{2\lambda^p - 1}\right)^{\frac{1}{p}} & \text{for $p=1$}, \\[2mm]
				E_{3}\,\frac{1}{n^{\frac{1}{2}-\frac{1}{t}}\, m^{\frac{1}{2}-\frac{1}{s}}} \left(\frac{\lambda^p -1}{2\lambda^p - 1}\right)^{\frac{1}{p}}\, & \text{for $p \geq 2$}, \\[2mm]
				
				E_{2}(X)  \max \left\{\frac{m^{\frac{1}{s}-\frac{4-p}{2p}}}{n^{\frac{4-p}{2p}-\frac{1}{t}}}, \frac{m^{\frac{2}{ps}-\frac{3-p}{p}}}{e^{(1-\theta)} n^{\frac{3-p}{p} -\frac{2}{pt}}}\right\}\left(\frac{\lambda^p -1}{2\lambda^p - 1}\right)^{\frac{1}{p}}  \hspace{-2mm} & \text{for $1<p< 2$}, 
			\end{cases}
		\end{equation*}
		and
		$$L_{\lambda}(B_{\ell^m_s(\ell^n_t)}, p,X) \leq d\,\lambda^{\frac{1}{\log \,mn}}\,b_{[\log\,mn]}(B_{\ell^m_s(\ell^n_t)})\,n^{\frac{1}{2}-\frac{1}{t}-\frac{1}{p}}m^{\frac{1}{2}-\frac{1}{s}-\frac{1}{p}}.
		$$
		If $X=\mathcal{B}(\mathcal{H})$ is infinite dimensional then
		$$L_{\lambda}(B_{\ell^m_s(\ell^n_t)}, p,X) \geq	\begin{cases}
			E_{5}\, \dfrac{\left(\lambda^p -1\right)^{\frac{1}{p}}}{\lambda}\, \,  \frac{1}{ n^{1-\frac{1}{t}}\, m^{1-\frac{1}{s}}} & \text{for $p > 2$}, \\[2mm]
			
			E_{6}\,\dfrac{\left(\lambda^p -1\right)^{\frac{1}{p}}}{\lambda}\, \,  \frac{1}{ n^{\frac{1}{t}}\, m^{\frac{1}{s}}} & \text{for $p>\mathrm{Cot}(X)$}, \\[2mm]
			E_{7}\, \dfrac{\left(\lambda^p -1\right)^{\frac{1}{p}}}{\lambda}\,  \, \frac{ 1}{n^{\frac{1}{t}+\frac{1}{p}-\frac{1}{r}}\, m^{\frac{1}{s}+\frac{1}{p}-\frac{1}{r}}} & \text{for $p\leq  \mathrm{Cot}(X)$},\\[2mm]
			E_{8}\, \dfrac{\left(\lambda^p -1\right)^{\frac{1}{p}}}{\lambda}\,   \frac{1}{ n^{\frac{1}{2}-\frac{1}{t}+\frac{1}{p}}\, m^{\frac{1}{2}-\frac{1}{s}+\frac{1}{p}}} & \text{for $p \leq 2$},
		\end{cases}
		$$ 
		whenever $X$ has cotype $r$. Moreover,
		\begin{equation*}
			L_{\lambda}(B_{\ell^m_s(\ell^n_t)}, p,X) \leq E_9\,\lambda\, n^{\frac{1}{\mathrm{Cot}(X)}\, - \,\frac{1}{p}+\frac{1}{t}}\,\, m^{\frac{1}{\mathrm{Cot}(X)}\, - \,\frac{1}{p}+\frac{1}{s}}.
		\end{equation*}
	\end{enumerate}
	Here the constants $E_1, \ldots, E_9$, and $d$ are as in Theorems \ref{thm-1.2} and \ref{thm-4.1}.	
\end{example}

\begin{pf}
	We begin by observing the following fact that for $1 \le s,t,\ell,r \le \infty$.
	A direct computation (see \cite[ p.~188]{defant-2003}) shows that the identity operator 
	between mixed spaces factorizes as
	\[
	\| \mathrm{Id} : \ell_s^m(\ell_t^n) \to \ell_\ell^m(\ell_r^n) \|
	=
	\| \mathrm{Id} : \ell_s^m \to \ell_\ell^m \|
	\,
	\| \mathrm{Id} : \ell_t^n \to \ell_r^n \|.
	\]
	This follows from the product structure of the norm:
	\[
	\|(x_{ij})\|_{\ell_s^m(\ell_t^n)}
	=
	\left( \sum_{i=1}^m 
	\left( \sum_{j=1}^n |x_{ij}|^t \right)^{s/t}
	\right)^{1/s},
	\]
	which allows the identity operator to decompose coordinatewise.
	
	Moreover, for finite-dimensional $\ell_p$-spaces it is well known that
	\[
	\| \mathrm{Id} : \ell_s^n \to \ell_t^n \|
	=
	\begin{cases}
		1, & s \le t, \\[6pt]
		n^{\frac{1}{t}-\frac{1}{s}}, & s > t.
	\end{cases}
	\]
	Consequently,
	\[
	\| \mathrm{Id} : \ell_s^m(\ell_t^n) \to \ell_\ell^m(\ell_r^n) \|
	=
	m^{\max\{ \frac{1}{\ell}-\frac{1}{s},\,0\}}
	\,
	n^{\max\{ \frac{1}{r}-\frac{1}{t},\,0\}}.
	\]
	
	With this explicit norm computation at hand, the lower and upper estimates 
	for $L_\lambda\big(B_{\ell_s^m(\ell_t^n)}, p, X\big)$	
	follow directly from Theorems \ref{thm-1.2} and \ref{thm-4.1} by choosing $q=2$ and transferring the bounds via the above identity 
	operator estimates.
	\par
	More precisely, the exponents of $m$ and $n$ appearing in each case 
	are obtained by inserting the norm 
	\[
	\| \mathrm{Id} : \ell_s^m(\ell_t^n) \to \ell_2^{mn} \|
	\]
	into the general bounds provided in 
	\ref{thm-1.2} and \ref{thm-4.1}. 
	\par
	In the infinite-dimensional case $X = \mathcal{B}(\mathcal{H})$, the argument is identical, 
	except that we combine the above identity estimates with the cotype 
	assumptions on $X$ and the corresponding parts of Theorems \ref{thm-1.2} and \ref{thm-4.1}. 
	The parameters involving $\mathrm{Cot}(X)$ arise precisely from those 
	general results when applied to spaces with finite cotype $r$.
	\par
	Therefore, all the stated lower and upper bounds follow immediately 
	from Theorems \ref{thm-1.2} and \ref{thm-4.1} together with the explicit computation of the identity operator norm between mixed $\ell_p$-spaces. This completes the proof.
\end{pf}

\medskip
Our results can also be applied to many concrete symmetric Banach sequence spaces such as Lorentz spaces $\ell_{s,t}$ as well as Orlicz spaces $\ell_{\psi}$. For their definition we refer \cite{Lindenstrauss-book}. 
\begin{example}[{\bf Lorentz spaces}]
	Let $1 \leq s,t \leq \infty$.
	\begin{enumerate}
		\item Let $1 \leq s <2$ and $1\leq t \leq \infty$, or $s=2$ and $1 \leq t \leq 2$. If $X=\mathcal{B}(\mathcal{H})$ is finite dimensional 
		\begin{equation*}
			L_{\lambda}(B_{\ell^n_{s,t}}, p,X) \geq	\begin{cases}
				E_{1}(X)\,  \, \max \left\{\frac{1}{n^\frac{2}{s}}, \, \frac{1}{e\, n}\right\}\, \left(\frac{\lambda^p -1}{2\lambda^p - 1}\right)^{\frac{1}{p}} & \text{for $p=1$}, \\[2mm]
				E_{3}\, n^{-\frac{2}{p} \left(\frac{1}{s}-\frac{1}{2}\right)}\, \left(\frac{\lambda^p -1}{2\lambda^p - 1}\right)^{\frac{1}{p}}\, & \text{for $p \geq 2$}, \\[2mm]
				
				E_{2}(X)  \max \left\{\frac{1}{n^{\frac{2-p}{p}} n^{\frac{2}{p}\left(\frac{1}{s}-\frac{1}{2}\right)}},  \frac{n^{-\left(\frac{1}{s}-\frac{1}{2}\right)}}{e^{(1-\theta)} n^{(1-\theta)\left(\frac{3}{2}-\frac{1}{s}\right)}}\right\}\left(\frac{\lambda^p -1}{2\lambda^p - 1}\right)^{\frac{1}{p}} \hspace{-2mm} & \text{for $1<p< 2$}, 
			\end{cases}
		\end{equation*}
		and 
		$$L_{\lambda}(B_{\ell^n_{s,t}}, p,X) \leq d\,\lambda^{\frac{1}{\log \,n}}\,b_{[\log\,n]}(B_{\ell^n_{s,t}})\, \,\frac{1}{n^{\frac{1}{p}}}.
		$$
		If $X=\mathcal{B}(\mathcal{H})$ is infinite dimensional, then
		$$  L_{\lambda}(B_{\ell^n_{s,t}}, p,X) \geq \begin{cases}
			E_{5}\, \dfrac{\left(\lambda^p -1\right)^{\frac{1}{p}}}{\lambda}\, \,  \frac{1}{  n^{\frac{1}{s}}} & \text{for $p > 2$}, \\[2mm]
			
			E_{6}\,\dfrac{\left(\lambda^p -1\right)^{\frac{1}{p}}}{\lambda}\, \,  \frac{1}{ n^{\frac{1}{s}}} & \text{for $p>\mathrm{Cot}(X)$}, \\[2mm]
			E_{7}\, \dfrac{\left(\lambda^p -1\right)^{\frac{1}{p}}}{\lambda}\,  \, \frac{ 1}{n^{\frac{1}{s}-\frac{1}{t}+\frac{1}{p}}} & \text{for $p\leq  \mathrm{Cot}(X)$},\\[2mm]
			E_{8}\, \dfrac{\left(\lambda^p -1\right)^{\frac{1}{p}}}{\lambda}\,   \frac{1}{  n^{\frac{1}{p}+\frac{1}{s}-\frac{1}{2}}} & \text{for $p \leq 2$};
		\end{cases}$$
		and
		\begin{equation*}
			L_{\lambda}(B_{\ell^n_{s,t}}, p,X) \leq 	
			E_9\,\lambda\, n^{\frac{1}{\mathrm{Cot}(X)}\, - \,\frac{1}{p}+\frac{1}{s}}. 
		\end{equation*}
		\item Let $2<s \leq \infty$ and $1 \leq t \leq \infty$. If $X=\mathcal{B}(\mathcal{H})$ is finite dimensional
		\begin{equation*} 
			L_{\lambda}(B_{\ell^n_{s,t}}, p,X) \geq	\begin{cases}
				E_{1}(X)\,  \, \max \left\{\frac{1}{n^{\frac{3}{2}-\frac{1}{s}}}, \, \frac{1}{e\, n^{2\left(1-\frac{1}{s}\right)}}\right\}\, \left(\frac{\lambda^p -1}{2\lambda^p - 1}\right)^{\frac{1}{p}} & \text{for $p=1$}, \\[2mm]
				E_{3}\,\frac{1}{n^{\frac{1}{2}-\frac{1}{s}}}\, \left(\frac{\lambda^p -1}{2\lambda^p - 1}\right)^{\frac{1}{p}}\, & \text{for $p \geq 2$}, \\[2mm]
				
				E_{2}(X)\,  \max \left\{\frac{1}{n^{\frac{3}{2}-\frac{1}{s}-\theta}}, \, \frac{1}{e^{(1-\theta)}\, n^{\frac{3(1-\theta)}{2}+\frac{\theta}{s}+2}}\right\}\left(\frac{\lambda^p -1}{2\lambda^p - 1}\right)^{\frac{1}{p}}  & \text{for $1<p< 2$}, 
			\end{cases}
		\end{equation*}
		and $$L_{\lambda}(B_{\ell^n_{s,t}}, p,X) \leq d\,\lambda^{\frac{1}{\log \,n}}\,b_{[\log\,n]}(B_{\ell^n_{s,t}})\, \,\frac{1}{n^{\frac{1}{p}+\frac{1}{s}-\frac{1}{2}}}.
		$$ If $X=\mathcal{B}(\mathcal{H})$ is infinite dimensional, then $$  L_{\lambda}(B_{\ell^n_{s,t}}, p,X) \geq \begin{cases}
			E_{5}\, \dfrac{\left(\lambda^p -1\right)^{\frac{1}{p}}}{\lambda}\, \,  \frac{1}{  n^{1-\frac{1}{s}}} & \text{for $p > 2$}, \\[2mm]
			
			E_{6}\,\dfrac{\left(\lambda^p -1\right)^{\frac{1}{p}}}{\lambda}\, \,  \frac{1}{ n^{\frac{1}{s}}} & \text{for $p>\mathrm{Cot}(X)$}, \\[2mm]
			E_{7}\, \dfrac{\left(\lambda^p -1\right)^{\frac{1}{p}}}{\lambda}\,  \, \frac{ 1}{n^{\frac{1}{s}-\frac{1}{t}+\frac{1}{p}}} & \text{for $p\leq  \mathrm{Cot}(X)$},\\[2mm]
			E_{8}\, \dfrac{\left(\lambda^p -1\right)^{\frac{1}{p}}}{\lambda}\,   \frac{1}{  n^{\frac{1}{2}-\frac{1}{s}+\frac{1}{p}}} & \text{for $p \leq 2$};
		\end{cases}$$
		and
		\begin{equation*}
			L_{\lambda}(B_{\ell^n_{s,t}}, p,X) \leq 	
			E_9\,\lambda\, n^{\frac{1}{\mathrm{Cot}(X)}\, - \,\frac{1}{p}+\frac{1}{s}}. 
		\end{equation*}
	\end{enumerate}
	Here the constants $E_1, \ldots, E_9$, and $d$ are as in Theorems \ref{thm-1.2} and \ref{thm-4.1}.
\end{example}
\begin{pf}
	We divide the proof according to the geometry of the Lorentz space 
	$\ell_{s,t}$ and the corresponding embedding properties.
	
	\medskip
	\noindent
	Recall first that the dual norm of the vector $\sum_{k=1}^{n} e_k^*$ satisfies
	\[
	\Big\|\sum_{k=1}^{n} e_k^*\Big\|_{\ell_{s,t}^*}
	= n^{1-\frac{1}{s}},
	\]
	which follows from the explicit description of the dual of 
	$\ell_{s,t}$.
	
	Moreover, Lorentz spaces satisfy the lexicographical inclusion rule
	\[
	\ell_{p,q}\subseteq \ell_{s,t}
	\quad\Longleftrightarrow\quad
	p<s \;\text{ or }\; (p=s \text{ and } q\le t),
	\]
	with continuous embeddings.
	
	We now compute the norms of the relevant identity operators
	that enter the
	general estimates of Theorems~\ref{thm-1.2}, \ref{thm-1.3-a},
	and \ref{thm-1.6}.
	
	\medskip
	\noindent
	\textbf{Embedding with $\ell_\infty^n$.}
	
	For every $1\le s\le\infty$ and $1\le t\le\infty$,
	$
	\ell_{s,t}^n \subseteq \ell_\infty^n
	$,
	and hence
	$
	\|\mathrm{Id}: \ell_{s,t}^n \to \ell_\infty^n\| = 1
	$.
	Conversely,
	\[
	\|\mathrm{Id}: \ell_\infty^n \to \ell_{s,t}^n\|
	= \|(1,\dots,1)\|_{s,t}
	= n^{\frac1s}.
	\]
	
	\medskip
	\noindent
	\textbf{Embedding with $\ell_2^n$.}
	
	\medskip
	\textbf{Case $1 \le s < 2$, $1\le t\le\infty$, or $s=2$, $1\le t\le2$.}
	
	In this range we have the continuous embedding
	$\ell_{s,t}^n \subseteq \ell_2^n$.
	Hence
	$\|\mathrm{Id}: \ell_{s,t}^n \to \ell_2^n\| \le 1$.
	For the reverse embedding, using the vector $x=(1,\dots,1)$, we obtain $\|x\|_{s,t}=n^{\frac1s}$ and $\|x\|_2=n^{\frac12}$
	so that
	\[
	\|\mathrm{Id}: \ell_2^n \to \ell_{s,t}^n\|
	=\sup_{x\neq0}\frac{\|x\|_{s,t}}{\|x\|_2}
	= n^{\frac1s-\frac12}.
	\]
	
	\medskip
	\textbf{Case $2<s\le\infty$, $2\le t\le\infty$.}
	
	Here $\ell_{s,t}$ is $2$-convex (see \cite[p.~189]{defant-2003}),
	and $\ell_2^n \subseteq \ell_{s,t}^n$.
	Thus
	\[
	\|\mathrm{Id}: \ell_2^n \to \ell_{s,t}^n\| \le 1,
	\qquad
	\|\mathrm{Id}: \ell_{s,t}^n \to \ell_2^n\|
	= n^{\frac12-\frac1s}.
	\]
	
	\medskip
	\textbf{Case $2<s\le\infty$, $1\le t\le2$.}
	
	In this situation (see \cite[p.~189]{defant-2003})
	\[
	\|\mathrm{Id}: \ell_2^n \to \ell_{s,t}^n\| \le 1,
	\qquad
	\|\mathrm{Id}: \ell_{s,t}^n \to \ell_2^n\|
	\le n^{\frac12-\frac1s}.
	\]
	
	\medskip
	\noindent
	\underline{Proof of part (1).}
	
	Assume $1\le s<2$, $1\le t\le\infty$, or $s=2$, $1\le t\le2$.
	Then $\ell_{s,t}^n\subseteq \ell_2^n$ and so
	\[
	\|\mathrm{Id}: \ell_2^n \to \ell_{s,t}^n\|
	= n^{\frac1s-\frac12}.
	\]
	
	Applying Theorems~\ref{thm-1.3-a}(1) and
	\ref{thm-1.6} (1) for $q=2$, and inserting
	\[
	\|\mathrm{Id}: \ell_2^n \to \ell_{s,t}^n\|
	= n^{\frac1s-\frac12},
	\qquad
	\|\mathrm{Id}: \ell_\infty^n \to \ell_{s,t}^n\|
	= n^{\frac1s},
	\qquad
	\|\mathrm{Id}: \ell_{s,t}^n \to \ell_\infty^n\|=1,
	\]
	together with
	$\Big\|\sum_{k=1}^n e_k^*\Big\|_{\ell_{s,t}^*}
	= n^{1-\frac1s}$, 
	yields precisely the bounds stated in part $(1)$, both in the
	finite- and infinite-dimensional cases of
	$X=\mathcal{B}(\mathcal{H})$.
		
	\medskip
	\noindent
	\underline{Proof of part (2).} \noindent Let $2<s\le\infty$ and $1\le t\le\infty$.
	
	If $2\le t\le\infty$, then $\ell_{s,t}$ is $2$-convex.
	Using
	$\|\mathrm{Id}: \ell_{s,t}^n \to \ell_2^n\|
	= n^{\frac12-\frac1s},
	\qquad
	\|\mathrm{Id}: \ell_\infty^n \to \ell_{s,t}^n\|
	= n^{\frac1s}$,
	and applying Theorems~\ref{thm-1.3-a}(2) and \ref{thm-1.6} (2) for $q=2$,
	we obtain the desired bounds.
	
	If $1\le t\le2$, we use instead
	$\|\mathrm{Id}: \ell_2^n \to \ell_{s,t}^n\| \le 1$, $
	\|\mathrm{Id}: \ell_{s,t}^n \to \ell_2^n\|
	\le n^{\frac12-\frac1s}$,
	together with $\|\mathrm{Id}: \ell_\infty^n \to \ell_{s,t}^n\|
	= n^{\frac1s}$, $\|\mathrm{Id}: \ell_{s,t}^n \to \ell_\infty^n\|=1$,
	and reduce the estimates by
	Theorems~\ref{thm-1.2}, \ref{thm-1.3-a}, and \ref{thm-1.6} for $q=2$.

	\medskip
	\noindent
	The proof is complete now.
\end{pf}

\begin{example} [{\bf Orlicz spaces}]
	Let $\psi$ be an Orlicz function which satisfies the $\Delta_{2}$-condition. 
	\begin{enumerate}
		\item Let $a^2 \leq T \psi(a)$ for all $a$ and some $T>0$.  If $X=\mathcal{B}(\mathcal{H})$ is finite dimensional, then
		\begin{equation*}
			L_{\lambda}(B_{\ell^n_{\psi}}, p,X) \geq	\begin{cases}
				E_{1}(X)\, \frac{1}{\norm{Id: \ell^n_{2} \rightarrow \ell^n_{\psi}}} \, \max \left\{\frac{1}{n\, \norm{Id: \ell^n_{2} \rightarrow \ell^n_{\psi}}}, \, \frac{1}{e\,n^{\frac{3}{2}}\, \psi^{-1}(1/n)}\right\}\, \left(\frac{\lambda^p -1}{2\lambda^p - 1}\right)^{\frac{1}{p}} & \text{for $p=1$}, \\[2mm]
				E_{3}\norm{I:\ell^n _2 \rightarrow \ell^n_{\psi}}^{-\frac{2}{p}}\, \left(\frac{\lambda^p -1}{2\lambda^p - 1}\right)^{\frac{1}{p}}\, & \text{for $p \geq 2$}, \\[2mm]
				
				E_{2}(X)  \max \left\{\frac{1}{n^{1-\theta} \norm{Id: \ell^n_{2} \rightarrow \ell^n_{\psi}}^{2-\theta}},  \frac{\norm{I:\ell^n _2 \rightarrow \ell^n_{\psi}}^{-1}}{(en^{\frac{3}{2}})^{(1-\theta)} (\psi^{-1}(1/n))^{1-\theta}}\right\}\left(\frac{\lambda^p -1}{2\lambda^p - 1}\right)^{\frac{1}{p}} \hspace{-4mm}  & \text{for $1<p< 2$}, 
			\end{cases}
		\end{equation*}
		and $$L_{\lambda}(B_{\ell^n_{\psi}}, p,X) \leq d\,  \lambda^{\frac{1}{\log \,n}}\,b_{[\log\,n]}(B_{\ell^n_{\psi}})\, \,\frac{1}{n^{\frac{1}{p}}}.
		$$
		If $X=\mathcal{B}(\mathcal{H})$ is infinite dimensional, 
		$$  L_{\lambda}(B_{\ell^n_{\psi}}, p,X) \geq \begin{cases}
			E_{5}\, \dfrac{\left(\lambda^p -1\right)^{\frac{1}{p}}}{\lambda}\, \frac{1}{\norm{Id: \ell^n_{2} \rightarrow \ell^n_{\psi}}}\,  \frac{1}{  n^{\frac{1}{2}}} & \text{for $p > 2$}, \\[2mm]
			
			E_{6}\,\dfrac{\left(\lambda^p -1\right)^{\frac{1}{p}}}{\lambda}\, \,  \frac{\psi^{-1}(1/n)}{ \psi^{-1}(1)} & \text{for $p>\mathrm{Cot}(X)$}, \\[2mm]
			E_{7}\, \dfrac{\left(\lambda^p -1\right)^{\frac{1}{p}}}{\lambda}\,  \, \frac{\psi^{-1}(1/n)}{ \psi^{-1}(1)}\, \frac{ 1}{n^{\frac{1}{p}-\frac{1}{t}}} & \text{for $p\leq  \mathrm{Cot}(X)$},\\[2mm]
			E_{8}\, \dfrac{\left(\lambda^p -1\right)^{\frac{1}{p}}}{\lambda}\,   \frac{1}{\norm{Id: \ell^n_{2} \rightarrow \ell^n_{\psi}}}\, \frac{1}{  n^{\frac{1}{p}}} & \text{for $p \leq 2$};
		\end{cases}$$
		and
		\begin{equation*}
			L_{\lambda}(B_{\ell^n_{\psi}}, p,X) \leq 	
			E_9\,\lambda\, n^{\frac{1}{\mathrm{Cot}(X)}\, - \,\frac{1}{p}} \, \frac{\psi^{-1}(1)}{ \psi^{-1}(1/n)}. 
		\end{equation*}
		
		\item Let $\psi(\beta \,a) \leq T \beta^2\,\psi(a)$ for $0 \leq \beta,a \leq 1$ and some $T>0$. If $X=\mathcal{B}(\mathcal{H})$ is finite dimensional,
		\begin{equation*}
			R_{\lambda}(B_{\ell^n_{\psi}}, p,X) \geq	\begin{cases}
				E_{1}(X)\,  \frac{1}{\norm{\mathrm{Id}:\ell^n_{\psi} \to \ell^n _2  }}\, \max \left\{\frac{1}{n}, \, \frac{1}{e\,n^{\frac{3}{2}}\, \psi^{-1}(1/n)}\right\}\, \left(\frac{\lambda^p -1}{2\lambda^p - 1}\right)^{\frac{1}{p}} & \text{for $p=1$}, \\[2mm]
				E_{3}\, \frac{1}{\norm{\mathrm{Id}:\ell^n_{\psi} \to \ell^n _2  }}\, \left(\frac{\lambda^p -1}{2\lambda^p - 1}\right)^{\frac{1}{p}}\, & \text{for $p \geq 2$}, \\[2mm]
				
				\frac{E_{2}(X)}{\norm{\mathrm{Id}:\ell^n_{\psi} \to \ell^n _2  }}\,  \max \left\{\frac{1}{n^{1-\theta}}, \, \frac{1}{(en^{\frac{3}{2}})^{(1-\theta)}\, (\psi^{-1}(1/n))^{1-\theta}}\right\}\left(\frac{\lambda^p -1}{2\lambda^p - 1}\right)^{\frac{1}{p}}  & \text{for $1<p< 2$}, 
			\end{cases}
		\end{equation*}
		and 
		$$L_{\lambda}(B_{\ell^n_{\psi}}, p,X) \leq d\,  \lambda^{\frac{1}{\log \,n}}\,b_{[\log\,n]}(B_{\ell^n_{\psi}})\,\,\psi^{-1}\left(\frac{1}{n}\right)\,  n^{-\left(\frac{1}{p}-\frac{1}{2}\right)}.$$
		If $X=\mathcal{B}(\mathcal{H})$ is infinite dimensional, 
		$$  L_{\lambda}(B_{\ell^n_{\psi}}, p,X) \geq \begin{cases}
			E_{5}\, \dfrac{\left(\lambda^p -1\right)^{\frac{1}{p}}}{\lambda}\, \frac{1}{n\, \psi^{-1}(1/n)} & \text{for $p > 2$}, \\[2mm]
			
			E_{6}\,\dfrac{\left(\lambda^p -1\right)^{\frac{1}{p}}}{\lambda}\, \,  \frac{\psi^{-1}(1/n)}{ \psi^{-1}(1)} & \text{for $p>\mathrm{Cot}(X)$}, \\[2mm]
			E_{7}\, \dfrac{\left(\lambda^p -1\right)^{\frac{1}{p}}}{\lambda}\,  \, \frac{\psi^{-1}(1/n)}{ \psi^{-1}(1)}\, \frac{ 1}{n^{\frac{1}{p}-\frac{1}{t}}} & \text{for $p\leq  \mathrm{Cot}(X)$},\\[2mm]
			E_{8}\, \dfrac{\left(\lambda^p -1\right)^{\frac{1}{p}}}{\lambda}\,   \frac{1}{\psi^{-1}(1/n)}\, \frac{1}{  n^{\frac{1}{p}+\frac{1}{2}}} & \text{for $p \leq 2$};
		\end{cases}$$
		and
		\begin{equation*}
			L_{\lambda}(B_{\ell^n_{\psi}}, p,X) \leq 	
			E_9\,\lambda\, n^{\frac{1}{\mathrm{Cot}(X)}\, - \,\frac{1}{p}} \, \frac{\psi^{-1}(1)}{ \psi^{-1}(1/n)}. 
		\end{equation*}
	\end{enumerate}
	Here the constants $E_1, \ldots, E_9$, and $d$ are as in Theorems \ref{thm-1.2} and \ref{thm-4.1}.
\end{example}
\begin{pf}
	We begin by recalling two fundamental facts about finite dimensional Orlicz sequence spaces (see \cite[p.~192]{defant-2003}):
	
	\begin{equation*}\label{orlicz-basic}
		\left\|\sum_{k=1}^{n} e_k \right\|_{\ell_\psi}
		= \frac{1}{\psi^{-1}\!\left(\frac{1}{n}\right)}
		\quad \text{and} \quad
		\left\|\sum_{k=1}^{n} e_k^* \right\|_{\ell_\psi^*}
		\left\|\sum_{k=1}^{n} e_k \right\|_{\ell_\psi}
		= n .
	\end{equation*}
	
	In particular,
	\begin{equation}\label{dual-norm}
		\left\|\sum_{k=1}^{n} e_k^* \right\|_{\ell_\psi^*}
		= n \, \psi^{-1}\!\left(\frac{1}{n}\right).
	\end{equation}
On the other hand, 
\begin{equation} \label{infty-embd}
	\left\|\mathrm{Id}:\ell_\infty^n \to \ell_\psi^n\right\|
	=
	\frac{1}{\psi^{-1}\!\left(\frac{1}{n}\right)}\,\,\,\,\, \mbox{and} \,\,\,\, \left\|\mathrm{Id}:\ell_\psi^n \to \ell_\infty^n\right\|
	=
	\psi^{-1}(1).
\end{equation}

	\medskip
	
	\noindent

	\smallskip
	
	\textbf{Case (1).}
	Assume that
	$	a^2 \le T \psi(a)$ for all  $ a \ge 0 
	$.
	This condition implies the continuous embedding
	$
	\ell_\psi \hookrightarrow \ell_2 
	$.
	Thus, all estimates in Theorems~\ref{thm-1.3-a} (1) and \ref{thm-1.6} (1)
	apply to $\ell_\psi^n$ with the facts \eqref{dual-norm} and \eqref{infty-embd}.
		
	\medskip
	
	\textbf{Case (2).}
	Assume that
	$
	\psi(\beta a) \le T \beta^2 \psi(a)$,	$0 \le \beta,a \le 1$.
	This is precisely the condition ensuring that $\ell_\psi$ is
	$2$-convex (see \cite[p.189]{defant-2003}). Therefore
	the results of Theorems \ref{thm-1.3-a} (2) and \ref{thm-1.6} (2)
	can be applied with the facts \eqref{dual-norm} and \eqref{infty-embd}.
	
	\medskip
	Combining these completes the proof.
\end{pf}
\begin{rem}
	By arguments analogous to those used in the proofs of Theorems  \ref{thm-1.2}, \ref{thm-4.1}, \ref{thm-1.3-a}, and \ref{thm-1.6}, one can study the asymptotic behavior of the second Bohr radius constant $B_{\lambda}(B_{Z}, p,X)$ for holomorphic functions and show that the asymptotic behaviors of $L_{\lambda}(B_{Z}, p,X)$ and $B_{\lambda}(B_{Z}, p,X))$ coincide up to a multiplicative constant. These results can likewise be applied to Banach sequence spaces, including the standard examples discussed in Section $5$, leading to the same conclusion: the asymptotic behaviors of $L_{\lambda}(B_{Z}, p,X)$ and $B_{\lambda}(B_{Z}, p,X))$ agree up to a scalar multiple. We therefore omit the proof, as it follows along similar lines. 
\end{rem}
\noindent{\bf Statements and Declarations:}\\

\noindent{\bf Acknowledgment.} The author sincerely acknowledges Professor B. V. Rajarama Bhat, ISI Bangalore, for support through the J. C. Bose Fellowship during the final stages of this manuscript. The author is currently supported by an NBHM Postdoctoral Fellowship from the Department of Atomic Energy (DAE), Government of India.\\

\noindent{\bf Conflict of interest.} The author declares that there is no conflict of interest regarding the publication of this paper.\\

\noindent{\bf Data availability statement.} Data sharing not applicable to this article as no datasets were generated or analysed during the current study.\\

\noindent{\bf Competing Interests.} The author declares none.\\

\noindent{\bf ORCID Id.} 0000-0001-9466-
178X

\end{document}